\newtheorem{Theorem}{\bf Theorem}[section]
\newtheorem{Proposition}{\bf Proposition}[section]
\newtheorem{Lemma}{\bf Lemma}[section]
\theoremstyle{definition}
\newtheorem{Remark}{\bf Remark}[section]
\newtheorem{Definition}{\bf Definition}[section]
\newcommand{\jn}{\quad\textrm{in}\quad}
\newcommand{\for}{\quad\textrm{for}\quad}
\newcommand{\jf}{\quad\textrm{if}\quad}
\newcommand{\as}{\quad\textrm{as}\quad}
\newcommand{\with}{\quad\textrm{with}\quad}
\newcommand{\FA}{\quad\textrm{for any}\quad}
\newcommand{\FS}{\quad\textrm{for some}\quad}
\newcommand{\Ker}{\operatorname{Ker}}
\renewcommand{\Im}{\operatorname{Im}}
\newcommand{\dist}{\operatorname{dist}}
\newcommand{\supp}{\operatorname{supp}}
\newcommand{\loc}{{\mathrm{loc}}}
\newcommand{\R}{\mathbb{R}}
\newcommand{\integ}[1]{\int_{#1}}
\numberwithin{equation}{section}
\title{Semilinear elliptic problems on the half space\\
with a supercritical nonlinearity}
\author{
Sho Katayama\footnote{Email: katayama-sho572@g.ecc.u-tokyo.ac.jp}\vspace{5pt}\\
Graduate School of Mathematical Sciences,\\
The University of Tokyo,\\
3-8-1 Komaba, Meguro-ku, Tokyo 153-8914, Japan
}
\date{}
\begin{document}
\maketitle
\renewcommand{\thefootnote}{\fnsymbol{footnote}}
\footnote[0]{2020AMS Subject Classification: 35J25, 35J61, 35B09, 35B32}
\footnote[0]{Keywords: scalar field equation, supercritical, multiple positive solutions, inhomogeneous boundary condition, Joseph--Lundgren exponent}
\renewcommand{\thefootnote}{\arabic{footnote}}
\begin{abstract}
This paper concerns positive solutions to the boundary value problems of the scalar field equation in the half space with a Sobolev supercritical nonlinearity and an inhomogeneous Dirichlet boundary condition, admitting a nontrivial nonnegative Radon measure as the boundary data. Under a suitable integrability assumption on the boundary data and the Joseph--Lundgren subcritical condition on the nonlinear term, we give a complete classification of the existence/nonexistence of a positive solution with respect to the size of the boundary data. Furthermore, we give a result on the existence of multiple positive solutions via bifurcation theory.
\end{abstract}
\section{Introduction}
This paper concerns the existence/nonexistence and multiplicity of solutions to the boundary value problem for a semilinear scalar field equation
\begin{equation}
\tag{$\mbox{P}_\kappa$}\label{P}
\left\{\begin{aligned}
-\Delta u+u&=u^p&\jn&\R^N_+,\\
u&>0&\jn&\R^N_+,\\
u(x',x_N)&\to 0&\as& x_N\to+\infty\quad\for x'\in\R^{N-1},\\
u(x',0)&=\kappa\mu(x')&\for& x'\in\R^{N-1}.
\end{aligned}\right.
\end{equation}
Here $N\ge 1$, $\R^N_+$ is the half space $\{(x_1,\ldots,x_N)\in\R^N: x_N>0\}$, $p>1$, $\kappa>0$, and $\mu$ is a nontrivial nonnegative Radon measure on $\R^{N-1}$. We are especially interested in the Sobolev supercritical case $p>p_S$, where
\[
p_S:=\infty\jf N\le 2,\qquad p_S:=\frac{N+2}{N-2}\jf N\ge 3.
\]
In general, one of difficulties of elliptic problems with supercritical nonlinearities is that it is difficult to treat them as variational problems for energy functionals on Sobolev spaces, due to the failure of the embedding $H^1_0\subset L^{p+1}$. For this reason, the existence/nonexistence and multiplicity of solutions to elliptic problems with supercritical nonlinearities are widely open. We also notify that solutions to elliptic equations with supercritical nonlinearities have quite a little different structures from those to elliptic equations with subcritical or critical nonlinearities.

The aim of this paper is to classify the existence/nonexistence and multiplicity of positive solutions to problem~\eqref{P} with respect to the value of parameter $\kappa$, under a wide assumption on the boundary data $\mu$. In particular, we prove the existence of a threshold constant $\kappa^*\in(0,\infty)$ with the following properties.
\begin{enumerate}[label={\rm(\Alph*)}]
\item If $0<\kappa<\kappa^*$, then problem~\eqref{P} possesses a solution.
\item If $\kappa>\kappa^*$, then problem~\eqref{P} possesses no solutions.
\item If $1<p<p_{JL}$ and $\kappa=\kappa^*$, then problem~\eqref{P} possesses a unique solution.
\item If $1<p<p_{JL}$, then there is a constant $\kappa_*\in[0,\kappa^*)$ such that if $\kappa_*<\kappa<\kappa^*$, then problem~\eqref{P} possesses at least two solutions.
\end{enumerate}
Here $p_{JL}$ is the so-called Joseph--Lundgren critical exponent, that is,
\[
p_{JL}:=\infty\jf N\le 10,\quad p_{JL}:=\frac{N^2-8N+4+8\sqrt{N-1}}{(N-2)(N-10)}\jf N\ge 11.
\]
Note that $p_{JL}>p_S$ if $N\ge 3$. $p_{JL}$ appears in various aspects of nonlinear elliptic equations, such as bifurcations and stability of solutions to semilinear elliptic problems (see e.g. \cites{CR1,CR2,F,IOS01,IOS03,IK,JL} and references therein).

There are many studies concerning the existence/nonexistence and multiplicity of positive solutions to semilinear elliptic equations (see e.g. \cites{JunAi, BaL, BL1, BL2, BGNV, BV, CR1, CR2, EL, FW, FIK1, FIK2, Hsu1, HL1, HL2, IK, IOS01, IOS03, JL, LTW, NS01}). For instance, Ai and Zhu \cite{JunAi} proved the existence of $\kappa^*>0$ with properties (A)-(D) in the sense of weak solutions in $H^1(\R^N_+)$ with $\kappa_*=0$, under the assumptions $N\ge 2$, $1<p<p_S$, and $\mu\in H^{1/2}(\R^{N-1})\cap L^\infty(\R^{N-1})$ (see Remark~\ref{muassump}). 

We introduce some notations and formulate the definition of solutions to problem~\eqref{P}. For $x=(x_1,\ldots,x_N)\in\R^N$, we denote by $x^*$ the reflected point of $x$ with respect to the plane $\partial\R^N_+=\{x_N=0\}$, that is,
\[
x^*:=(x_1,\ldots,x_{N-1},-x_N).
\]
Let $E=E(|x|)$ be the fundamental solution for the scalar field operator $-\Delta+1$~on~$\R^N$, that is,
\[
E(r):=(2\pi)^{-N/2}r^{(2-N)/2}K_{(N-2)/2}(r)\for r>0,
\]
where $K_{(N-2)/2}$ is the modified Bessel function of order $(N-2)/2$. For $r>0$ and $x\in\R^N$, we write
\[
B(x,r):=\{y\in\R^N: |y-x|<r\}.
\]
We define the Dirichlet--Green kernel $G$ and the Poisson kernel $P$ for the operator $-\Delta+1$~on~$\R^N_+$ by
\[
G(x,y):=E(|x-y|)-E(|x^*-y|)\for x,y\in\overline{\R^N_+}\quad\mathrm{with}\quad x\neq y,
\]
\[
P(x,z):=\left.\frac{\partial}{\partial s}G(x,(z,s))\right|_{s=0}\for x\in\R^N_+,\quad z\in\R^{N-1}.
\]
Since $E$ is strictly decreasing, $G(x,y)>0$ and $P(x,z)>0$, for any $x,y\in\R^N_+$ and $z\in\R^{N-1}$. We remark here that the same argument as in \cite{GT}*{Section 2.4} derives
\begin{equation}\label{GRepr}
u(x)=\integ{\R^{N-1}}P(x,z)u(z)dz+\integ{\R^N_+}G(x,y)(-\Delta u(y)+u(y))dy
\end{equation}
for any $u\in C^2(\overline{\R^N_+})\cap W^{2,\infty}(\R^N_+)$. We also note that the integral kernel $G(x,y)$ is symmetric, that is,
\[
G(x,y)=G(y,x)\FA x,y\in\overline{\R^N_+}\with x\neq y.
\]
For simplicity, we write
\[
P[\mu](x):=\integ{\R^{N-1}}P(x,z)d\mu(z),\quad G[f](x):=\integ{\R^N_+}G(x,y)f(y)dy,
\]
for a Radon measure $\mu$ on $\R^{N-1}$ and a measurable function $f$ on $\R^{N}_+$. 

Also,
 we introduce a weighted Lebesgue space $L^q_\alpha$. For $q\in[1,\infty)$ and $\alpha\in\R$, we define the space $L^q_\alpha$ by
\[
L^q_\alpha:=\left\{f\in L^q_{\mathrm{loc}}(\R^N_+):h(x_N)^\alpha f\in L^q(\R^N_+)\right\},\quad h(t):=\begin{cases}
t&\for 0<t<1,\\
1&\for t\ge 1,
\end{cases}
\]
with the norm
\[
\|f\|_{L^q_\alpha}:=\left(\integ{\R^N_+}|f(x)|^qh(x_N)^{q\alpha}dx\right)^{1/q}.
\]
\begin{Definition}
Let $q\ge p>1$, $\alpha\ge 0$, $\kappa>0$, and $\mu$ be a nonnegative Radon measure on $\R^{N-1}$.
\begin{enumerate}[label={\rm (\roman*)}]
\item We say that $u\in L^q_\alpha$ is an $L^q_\alpha$-solution to problem~\eqref{P} if
\begin{equation}\label{IE}
u(x)=\kappa P[\mu](x)+G[u^p](x)>0
\end{equation}
for almost all (a.a.) $x\in\R^{N}_+$.
\item We say that $v\in L^q_\alpha$ is an $L^q_\alpha$-supersolution to problem~\eqref{P} if
\[
v(x)\ge\kappa P[\mu](x)+G[v^p](x)>0
\]
for a.a. $x\in\R^{N}_+$.
\item We say that an $L^q_\alpha$-solution $u$ to problem~\eqref{P} is a minimal $L^q_\alpha$-solution to problem~\eqref{P} if any $L^q_\alpha$-solution $\tilde{u}$ to problem~\eqref{P} satisfies $u(x)\le\tilde{u}(x)$ for a.a. $x\in\R^N_+$.
\end{enumerate}
\end{Definition}
Note that a minimal $L^q_\alpha$-solution to problem \eqref{P} is unique if it exists.

Now we are ready to state our results. Our first theorem concerns properties (A) and (B), that is, the existence of a solution to problem~\eqref{P} with $\kappa$ small and the nonexistence of a solution to problem~\eqref{P} with $\kappa$ large.
\begin{Theorem}\label{Thm1}
Let $1<p<q<\infty$ and $\alpha\ge 0$ be such that
\begin{equation}\label{qalpha}
\frac{1}{q}+\alpha<\frac{2}{p},\quad \frac{N}{q}+\alpha<\frac{2}{p-1}.
\end{equation}
Let $\mu$ be a nontrivial nonnegative Radon measure on $\R^{N-1}$ and assume that
\begin{equation}\label{pmu}
P[\mu]\in L^q_\alpha.
\end{equation}
Then there is $\kappa^*>0$ with the following properties.
\begin{enumerate}[label={\rm (\roman*)}]
\item If $0<\kappa<\kappa^*$, then problem~\eqref{P} possesses a minimal $L^q_{\alpha}$-solution $u^\kappa$.
\item If $\kappa>\kappa^*$, then problem~\eqref{P} possesses no $L^q_\alpha$-solutions.
\end{enumerate}
\end{Theorem}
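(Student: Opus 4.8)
The plan is to solve the integral equation \eqref{IE} by monotone iteration in the ordered space $L^q_\alpha$, the strict positivity of $G$ and $P$ supplying the order structure, and to reduce everything to one analytic ingredient, which I would establish first: a smoothing estimate for the Green operator. Under hypothesis \eqref{qalpha} I expect there to be a constant $C_0=C_0(N,p,q,\alpha)>0$ such that $G$ maps $L^{q/p}_{p\alpha}$ boundedly into $L^q_\alpha$; equivalently, since $\|u^p\|_{L^{q/p}_{p\alpha}}=\|u\|_{L^q_\alpha}^p$,
\[
\|G[u^p]\|_{L^q_\alpha}\le C_0\|u\|_{L^q_\alpha}^p\qquad\text{for all }u\in L^q_\alpha .
\]
This would follow from the pointwise bounds for $G(x,y)$ — domination by $E(|x-y|)$, sharpened near $\partial\R^N_+$ in terms of the boundary distances $h(x_N),h(y_N)$ — inserted into a weighted convolution (Young-type) inequality, the two inequalities in \eqref{qalpha} being exactly the integrability conditions that make such an estimate hold (the first governing the local/boundary singularity of $G$, the second its behaviour on unit scale, where $E$ decays exponentially). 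Besides this I use only the elementary monotonicity $0\le f\le g\Rightarrow 0\le G[f]\le G[g]$ and the fact that $A:=\|P[\mu]\|_{L^q_\alpha}\in(0,\infty)$, which holds by \eqref{pmu} together with the nontriviality of $\mu$ and the strict positivity $P[\mu]>0$ in $\R^N_+$.

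For $\kappa$ small I would run the iteration $u_0:=\kappa P[\mu]$, $u_{n+1}:=\kappa P[\mu]+G[u_n^p]$. The smoothing estimate gives $\|u_{n+1}\|_{L^q_\alpha}\le\kappa A+C_0\|u_n\|_{L^q_\alpha}^p$, so if $\|u_n\|_{L^q_\alpha}\le 2\kappa A$ and $C_0(2\kappa A)^p\le\kappa A$, i.e. if $0<\kappa\le\kappa_0:=(2^pC_0)^{-1/(p-1)}A^{-1}$, then also $\|u_{n+1}\|_{L^q_\alpha}\le2\kappa A$; as $\|u_0\|_{L^q_\alpha}=\kappa A$, the sequence stays in the ball of radius $2\kappa A$. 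It is nondecreasing ($u_1-u_0=G[u_0^p]\ge0$, and $u_{n+1}-u_n=G[u_n^p-u_{n-1}^p]\ge0$ by induction), so $u_n\uparrow u^\kappa$ a.e.; monotone convergence then gives $u^\kappa\in L^q_\alpha$ and $G[u_n^p]\uparrow G[(u^\kappa)^p]$ pointwise, and letting $n\to\infty$ yields $u^\kappa=\kappa P[\mu]+G[(u^\kappa)^p]\ge\kappa P[\mu]>0$ a.e., an $L^q_\alpha$-solution. It is minimal: every $L^q_\alpha$-solution $\tilde u$ obeys $\tilde u=\kappa P[\mu]+G[\tilde u^p]\ge\kappa P[\mu]=u_0$, hence $\tilde u\ge u_n$ for all $n$ by induction, hence $\tilde u\ge u^\kappa$.

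Next I would set $\kappa^*:=\sup\{\kappa>0:\eqref{P}\text{ admits an }L^q_\alpha\text{-supersolution}\}\in(0,\infty]$; the solution just built, being a supersolution, gives $\kappa^*\ge\kappa_0>0$. If $v$ is an $L^q_\alpha$-supersolution for a parameter $\kappa_1$ and $0<\kappa<\kappa_1$, then $v\ge\kappa_1 P[\mu]+G[v^p]\ge\kappa P[\mu]+G[v^p]$, so $v$ is also an $L^q_\alpha$-supersolution for $\kappa$ and in particular $v\ge\kappa P[\mu]=u_0$; running the same iteration from $u_0$ I get inductively $u_n\le v$ (from $u_{n+1}\le\kappa P[\mu]+G[v^p]\le v$) and $u_n$ nondecreasing, so $u_n\uparrow u^\kappa$ with $0\le u^\kappa\le v$, and $u^\kappa\in L^q_\alpha$ is, exactly as above, the minimal $L^q_\alpha$-solution for $\kappa$. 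Given $\kappa\in(0,\kappa^*)$ there is, by definition of the supremum, such a $\kappa_1>\kappa$ admitting a supersolution; this proves (i). For (ii): if $\kappa>\kappa^*$ then \eqref{P} has no $L^q_\alpha$-supersolution, hence no $L^q_\alpha$-solution — a statement with content only once we know $\kappa^*<\infty$, which is the remaining and main point.

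To prove $\kappa^*<\infty$ I would fix a ball $B$ with $\overline B\subset\R^N_+$, let $(\lambda_1,\phi_1)$ be the first Dirichlet eigenpair of $-\Delta$ on $B$ (so $\lambda_1>0$, $\phi_1>0$ in $B$), let $G_B$ be the Green kernel of $-\Delta+1$ on $B$, and write $G_B[f](x):=\int_B G_B(x,y)f(y)\,dy$. For fixed $y\in B$ the function $G(\cdot,y)-G_B(\cdot,y)$ solves $(-\Delta+1)w=0$ in $B$ (the two singularities cancel) with $w\ge0$ on $\partial B$, so the maximum principle for $-\Delta+1$ gives $G\ge G_B$ on $B\times B$; moreover $G_B[\phi_1]=(\lambda_1+1)^{-1}\phi_1$ on $B$. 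Now let $v$ be any $L^q_\alpha$-supersolution with parameter $\kappa$; then $v,v^p\in L^1(B)$ (as $v\in L^q_{\loc}(\R^N_+)$, $q>p$) and $v\ge G[v^p]\ge G_B[v^p]$ on $B$, so by symmetry of $G_B$, Fubini, and Jensen's inequality for the probability measure $\phi_1\,dx/\|\phi_1\|_{L^1(B)}$,
\[
\int_B v\,\phi_1\,dx\ \ge\ \int_B G_B[v^p]\,\phi_1\,dx\ =\ \frac1{\lambda_1+1}\int_B v^p\,\phi_1\,dx\ \ge\ \frac1{\lambda_1+1}\Bigl(\int_B v\,\phi_1\,dx\Bigr)^{\!p}\Big/\|\phi_1\|_{L^1(B)}^{p-1}.
\]
Since $v\ge\kappa P[\mu]>0$ the integral $\int_B v\,\phi_1\,dx$ is positive and may be cancelled, giving $\int_B v\,\phi_1\,dx\le\bigl((\lambda_1+1)\|\phi_1\|_{L^1(B)}^{p-1}\bigr)^{1/(p-1)}$, a bound depending only on $B$ and $p$. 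But $\int_B v\,\phi_1\,dx\ge\kappa\int_B P[\mu]\,\phi_1\,dx$ with $\int_B P[\mu]\,\phi_1\,dx\in(0,\infty)$, so $\kappa$ is bounded independently of $\kappa$ and $v$, i.e. $\kappa^*<\infty$. The crux is precisely this last step: one must produce an a priori bound that is insensitive to the (possibly very singular) datum $\mu$, and the interior test against $G_B$ together with the universal lower bound $v\ge\kappa P[\mu]$ does exactly that; the smoothing estimate of the first paragraph is the other place where genuine work is required.
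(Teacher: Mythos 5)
Your proposal is correct in outline and reaches both assertions, but it splits from the paper in an interesting way. For the existence part your monotone iteration in a small ball of $L^q_\alpha$ is essentially the paper's argument (the paper runs a contraction mapping on $\mathcal{F}_\delta$ and, separately, the same monotone scheme $U^\kappa_{j+1}=\kappa P[\mu]+G[(U^\kappa_j)^p]$ to produce the \emph{minimal} solution under any supersolution, as in Lemma~\ref{L3.1}); both hinge on the single estimate $\|G[w]\|_{L^q_\alpha}\le C\|w\|_{L^{q/p}_{p\alpha}}$, which you correctly isolate but do not prove. Be aware that this is not a routine weighted Young inequality: it is Proposition~\ref{Glaa}, whose proof (splitting $G(x,y)$ according to $|x-y|\lessgtr x_N$, $|x-y|\lessgtr 1$ and the positions of $x_N,y_N$ relative to $1$, plus a duality/interpolation in the weights) occupies most of Appendix~B, and checking that the two inequalities in \eqref{qalpha} are exactly the hypotheses of that proposition for the exponents $(q/p,p\alpha)\to(q,\alpha)$ is a genuine computation. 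So that step is a real debt in your write-up, though the statement you posit is the true one.

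Where you genuinely diverge is the proof that $\kappa^*<\infty$, and your route is both correct and considerably more elementary than the paper's. The paper first reduces $u^\kappa$ to a weak solution in $W^{1,2}_0(\R^N_+)$ (Section~\ref{weaksolsec}), studies the linearized eigenvalue problem to obtain the stability inequality \eqref{Rayleigh} for the minimal solution, and then tests against $\kappa P[\mu]\le u^\kappa$ to bound $\kappa^{p-1}$ by a Rayleigh quotient. You instead localize to a ball $B\Subset\R^N_+$, use the maximum principle for $-\Delta+1$ to get $G\ge G_B$ on $B\times B$, and run the classical eigenfunction--Jensen argument to bound $\int_B v\phi_1\,dx$ uniformly over all supersolutions $v$; combined with $v\ge\kappa P[\mu]$ and $0<\int_B P[\mu]\phi_1\,dx<\infty$ this bounds $\kappa$. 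All the individual steps check out ($G_B[\phi_1]=(\lambda_1+1)^{-1}\phi_1$, symmetry of $G_B$, $v^p\in L^1(B)$ since $q>p$). Your argument buys a self-contained, purely integral proof of Theorem~\ref{Thm1} that never needs the $W^{1,2}_0$ reduction; the paper's heavier route is not wasted, however, because the stability inequality \eqref{Rayleigh} and the eigenvalue analysis are exactly what drive the uniform estimates and the bifurcation argument of Theorem~\ref{Thm2}, so the paper obtains $\kappa^*<\infty$ as a byproduct of machinery it must build anyway.
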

Our second theorem, which is the main result of this paper, gives a complete classification of the existence/nonexistence of solutions to problem~\eqref{P} under the condition $1<p<p_{JL}$. Furthermore, it gives the existence of multiple solutions to problem~\eqref{P} with $\kappa$ slightly less than $\kappa^*$.
\begin{Theorem}\label{Thm2}
Assume the same conditions as in Theorem~{\rm\ref{Thm1}} and $1<p<p_{JL}$. Let $\kappa^*$ be as in Theorem~{\rm\ref{Thm1}}.
\begin{enumerate}[label={\rm (\roman*)}]
\item If $0<\kappa\le\kappa^*$, then problem~\eqref{P} possesses a minimal $L^q_\alpha$-solution $u^{\kappa}$. Furthermore, if $\kappa=\kappa^*$, then $u^{\kappa^*}$ is the unique $L^q_\alpha$-solution to problem~\eqref{P}.
\item If $\kappa>\kappa^*$, then problem~\eqref{P} possesses no solutions.
\item There is a constant $0\le\kappa_*<\kappa^*$ such that if $\kappa_*<\kappa<\kappa^*$, then problem~\eqref{P} possesses at least two $L^q_\alpha$-solutions.
\end{enumerate}
\end{Theorem}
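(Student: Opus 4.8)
The plan is to reduce everything to the structure of the minimal branch $\{u^\kappa\}_{0<\kappa<\kappa^*}$ and to a turning-point bifurcation at $\kappa^*$. We use the fixed-point form: $u$ is an $L^q_\alpha$-solution of~\eqref{P} iff $u>0$ a.e. and $u=\Phi_\kappa(u):=\kappa P[\mu]+G[u^p]$. Part~(ii) is exactly Theorem~\ref{Thm1}(ii), and in part~(i) the range $0<\kappa<\kappa^*$ is Theorem~\ref{Thm1}(i); only the endpoint $\kappa=\kappa^*$ and the uniqueness claim are new. By the comparison principle behind Theorem~\ref{Thm1}, $\kappa\mapsto u^\kappa$ is nondecreasing. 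The first step is to show that each $u^\kappa$ is \emph{semistable},
\[
p\integ{\R^N_+}(u^\kappa)^{p-1}\phi^2\,dx\le\integ{\R^N_+}\bigl(|\nabla\phi|^2+\phi^2\bigr)\,dx\FA\phi\in C_c^\infty(\R^N_+),
\]
equivalently that the compact, positivity-improving operator $K_\kappa\psi:=pG[(u^\kappa)^{p-1}\psi]$ on $L^q_\alpha$ (whose kernel is built from the symmetric $G$) has spectral radius $\le1$, and in fact $<1$ for $\kappa<\kappa^*$: this is the standard property of minimal solutions of monotone problems — a downward perturbation of $u^\kappa$ along the principal eigenfunction would produce a strictly smaller $L^q_\alpha$-supersolution, which the construction of $u^\kappa$ in Theorem~\ref{Thm1} forbids — while strict stability for $\kappa<\kappa^*$ follows because otherwise the implicit function theorem applied to $F(\kappa,u):=u-\Phi_\kappa(u)$ at $(\kappa,u^\kappa)$ would continue the branch past $\kappa^*$, contradicting~(ii). (That $K_\kappa$ is compact and positivity-improving on $L^q_\alpha$ uses the interior regularity of $u^\kappa$ and the weighted bounds for $G,P$ from Theorem~\ref{Thm1}.)

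The heart of the proof, and the step I expect to be the main obstacle, is a $\kappa$-uniform a priori bound for the minimal branch, and this is where $1<p<p_{JL}$ enters decisively. Since $u^\kappa$ solves $-\Delta u^\kappa+u^\kappa=(u^\kappa)^p$ classically in $\R^N_+$, multiplying by $(u^\kappa)^{2\gamma-1}\eta^2$ with an interior cutoff $\eta$, integrating by parts, and combining with the stability inequality tested at $\phi=(u^\kappa)^\gamma\eta$, yields — for a suitable exponent $\gamma>1$ — a bound on $\int(u^\kappa)^{2\gamma+p-1}\eta^2$ by lower-order weighted norms of $u^\kappa$ and of $P[\mu]$; the range of admissible $\gamma$ is nonempty precisely when $p<p_{JL}$, the Joseph--Lundgren exponent being the value of $p$ at which the two roots of the governing quadratic collide. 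A finite elliptic bootstrap then gives $u^\kappa\in L^\infty_\loc(\R^N_+)$ with interior bounds independent of $\kappa<\kappa^*$, and the weighted estimates of Theorem~\ref{Thm1}, using~\eqref{pmu} and~\eqref{qalpha}, control the behavior near $\partial\R^N_+$ (where $P[\mu]$ may be singular) and at infinity, so that $\sup_{0<\kappa<\kappa^*}\|u^\kappa\|_{L^q_\alpha}<\infty$. Consequently $u^{\kappa^*}:=\lim_{\kappa\uparrow\kappa^*}u^\kappa$ exists a.e., lies in $L^q_\alpha$, and — passing to the limit in~\eqref{IE} by monotone/dominated convergence — is an $L^q_\alpha$-solution of~\eqref{P} at $\kappa=\kappa^*$; it is minimal there since any $L^q_\alpha$-solution at $\kappa^*$ dominates $u^\kappa$ for every $\kappa<\kappa^*$, hence dominates $u^{\kappa^*}$. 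Letting $\kappa\uparrow\kappa^*$ in the stability inequality also shows that $u^{\kappa^*}$ is semistable.

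For uniqueness at $\kappa^*$, note that the bottom eigenvalue of $L:=I-K_{\kappa^*}$ equals $0$: it is $\ge0$ by semistability, and if it were positive then $L$ would be invertible on $L^q_\alpha$ (compactness of $K_{\kappa^*}$) and the implicit function theorem at $(\kappa^*,u^{\kappa^*})$ would again continue the branch to $\kappa>\kappa^*$, contradicting~(ii). Let $\varphi>0$ be the principal eigenfunction, $K_{\kappa^*}\varphi=\varphi$; then, by the symmetry of $G$, $g:=(u^{\kappa^*})^{p-1}\varphi>0$ satisfies $g=p(u^{\kappa^*})^{p-1}G[g]$, i.e.\ it generates the cokernel of $L$. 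If $v$ is any $L^q_\alpha$-solution at $\kappa=\kappa^*$, then $v\ge u^{\kappa^*}$ by minimality, and convexity of $t\mapsto t^p$ gives $v^p-(u^{\kappa^*})^p\ge p(u^{\kappa^*})^{p-1}(v-u^{\kappa^*})$, so $w:=v-u^{\kappa^*}\ge0$ satisfies $Lw=w-K_{\kappa^*}w\ge0$ a.e. Pairing with $g>0$, $0\le\langle Lw,g\rangle=\langle w,(I-K_{\kappa^*}^{T})g\rangle=0$, hence $Lw=0$ a.e.; then $G\bigl[(v^p-(u^{\kappa^*})^p)-p(u^{\kappa^*})^{p-1}w\bigr]=0$ with nonnegative integrand and strictly positive kernel forces the convexity inequality to be an equality a.e., and strict convexity gives $w\equiv0$, i.e.\ $v=u^{\kappa^*}$. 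This completes~(i).

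Finally, part~(iii) is a Crandall--Rabinowitz turning-point analysis at $(\kappa^*,u^{\kappa^*})$ (cf.\ \cites{CR1,CR2}). The operator $F_u(\kappa^*,u^{\kappa^*})=L$ is Fredholm of index $0$, with one-dimensional kernel $\mathrm{span}\{\varphi\}$ (simplicity from Krein--Rutman) and cokernel spanned by $g=(u^{\kappa^*})^{p-1}\varphi$, and the transversality condition reads $\langle F_\kappa,g\rangle=-\integ{\R^N_+}P[\mu](u^{\kappa^*})^{p-1}\varphi\,dx\ne0$, which holds since the integrand is positive. Hence the zero set of $F$ near $(\kappa^*,u^{\kappa^*})$ is a $C^1$ curve $s\mapsto(\kappa(s),u(s))$, $|s|$ small, with $(\kappa(0),u(0))=(\kappa^*,u^{\kappa^*})$, $u(s)=u^{\kappa^*}+s\varphi+o(s)$ in $L^q_\alpha$ and $\kappa'(0)=0$. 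Differentiating $F(\kappa(s),u(s))=0$ twice at $s=0$, pairing with $g$, and using $G[(u^{\kappa^*})^{p-1}\varphi]=\varphi/p$ gives
\[
\kappa''(0)\integ{\R^N_+}P[\mu](u^{\kappa^*})^{p-1}\varphi\,dx=-(p-1)\integ{\R^N_+}(u^{\kappa^*})^{p-2}\varphi^3\,dx<0,
\]
so $\kappa(s)=\kappa^*+\tfrac12\kappa''(0)s^2+o(s^2)<\kappa^*$ for $0<|s|\ll1$: the branch folds strictly to the left. Thus for $\kappa<\kappa^*$ sufficiently close to $\kappa^*$ there are $s_-<0<s_+$ with $\kappa(s_\pm)=\kappa$, yielding two distinct $L^q_\alpha$-solutions near $u^{\kappa^*}$; since $u^\kappa\uparrow u^{\kappa^*}$ in $L^q_\alpha$ with $u^\kappa\le u^{\kappa^*}$, the minimal solution is $u(s_-)$ and $u(s_+)$ is a genuine second solution. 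Taking $\kappa_*\in[0,\kappa^*)$ to be the infimum of parameters for which this dichotomy holds proves~(iii). Beyond Step~2, the residual work is to verify the functional-analytic hypotheses — compactness and the positivity-improving property of $K_{\kappa^*}$, and the integrability of $g$ against the dual of $L^q_\alpha$ so that the Fredholm pairing is legitimate — all of which again rest on the regularity obtained in the a priori bound.
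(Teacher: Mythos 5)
Your overall architecture coincides with the paper's: stability of the minimal branch, a Farina-type a priori bound whose admissible exponent range is nonempty exactly for $p<p_{JL}$, convexity plus the degenerate linearization for uniqueness at $\kappa^*$, and a Crandall--Rabinowitz turning point for multiplicity. Two steps deserve scrutiny. First, your a priori estimate tests the equation with $(u^\kappa)^{2\gamma-1}\eta^2$ for an \emph{interior} cutoff $\eta$ and then waves at ``the weighted estimates of Theorem~\ref{Thm1}'' to handle the boundary. In this problem the boundary is the whole difficulty: $\mu$ is only a Radon measure with $P[\mu]\in L^q_\alpha$, so $u^\kappa$ need not be bounded (nor in $W^{1,2}$) near $\partial\R^N_+$, and interior bounds alone do not yield $\sup_\kappa\|u^\kappa\|_{L^q_\alpha}<\infty$ or convergence of $G[(u^\kappa)^p]$. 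The paper resolves this by first proving (Section~\ref{weaksolsec}) that after subtracting finitely many Picard iterates, $w^\kappa:=u^\kappa-U^\kappa_{j_*+1}$ lies in $W^{1,2}_0(\R^N_+)\cap L^\infty$, and then running the energy estimate \emph{globally} on $(w^\kappa)^\nu$ with the cross terms $U^\kappa_{j_*}$, $V^\kappa$ controlled uniformly via the monotonicity \eqref{Umonotone} and Proposition~\ref{Glaa}. Some substitute for that decomposition is indispensable in your Step~2; without it the claim $\sup_\kappa\|u^\kappa\|_{L^q_\alpha}<\infty$ is unsupported.

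Second, in part (iii) you establish that the fold points strictly left by computing $\kappa''(0)<0$. The bifurcation theorem you invoke produces only a $C^1$ curve, and $\Phi$ itself is only $C^1$ on $L^q_\alpha$ when $1<p<2$ (the formal second derivative involves $G[(u^{\kappa^*})^{p-2}h^2]$, which is not even obviously defined there), so $\kappa''(0)$ is not available in the generality of the theorem. This gap is, however, entirely dispensable: you have already proved that $u^{\kappa^*}$ is the \emph{unique} solution at $\kappa^*$ and that no solutions exist for $\kappa>\kappa^*$, so $\kappa(s)\le\kappa^*$ for all $s$ and $\kappa(s)\ne\kappa^*$ for $s\ne 0$ (since $u(s)\ne u^{\kappa^*}$ by the direct-sum decomposition $s\psi+z(s)$); hence $\kappa(s)<\kappa^*$ for $0<|s|<\delta$ and the intermediate value theorem gives two distinct solutions at each level $\kappa^*-\varepsilon$. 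This is exactly the paper's route, and it avoids any second-order expansion. Your identification of $u(s_-)$ with the minimal solution is likewise unnecessary for the statement and would require additional justification. The remaining ingredients you cite --- compactness of $h\mapsto G[p(u^{\kappa^*})^{p-1}h]$ on $L^q_\alpha$ and the duality pairing with $g=(u^{\kappa^*})^{p-1}\psi^{\kappa^*}$ --- are true but are themselves nontrivial weighted estimates (Propositions~\ref{Glaa} and~\ref{cpt} and the integrability \eqref{psiinteg}), not consequences of interior regularity alone.
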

\begin{Remark}
\begin{enumerate}[label={\rm(\roman*)}]
\item It follows from the condition \eqref{qalpha} and $\alpha\ge 0$ that
\begin{equation}\label{qNp-12}
q>\frac{N}{2}(p-1),
\end{equation}
which together with elliptic regularity theorems implies that an $L^q_\alpha$-solution $u$ to problem~\eqref{P} belongs to $C^{2,\gamma}(\R^N_+)$ for any $\gamma\in(0,1)$, and it satisfies
\[
-\Delta u+u=u^p\jn\R^N_+,\quad u(x',x_N)\to 0\as x_N\to+\infty\for x'\in\R^{N-1}.
\]
\item Even in the subcritical case $1<p<p_S$, our result is new in the sense that it admits some singular boundary data. We will discuss on the assumption $P[\mu]\in L^q_\alpha$ in \ref{pmucond}.
\end{enumerate}
\end{Remark}

The proof of Theorem~\ref{Thm1} is based on the contraction mapping theorem on $L^q_\alpha$, via the integral inequality for the integral kernel $G$ on the spaces $L^q_\alpha$ (see Proposition~\ref{Glaa} and \ref{pLaG}).

In the proof of Theorem~\ref{Thm2}, the ``stability'' of the minimal $L^q_\alpha$-solution $u^\kappa$, that is,
\[
\integ{\R^N_+}p(u^\kappa)^{p-1}\omega^2dx\le\integ{\R^N_+}(|\nabla\omega|^2+\omega^2)dx\FA\omega\in W^{1,2}_0(\R^N_+),\quad \kappa\in(0,\kappa^*),
\]
plays a crucial role. The proof of the existence of a solution to problem~\eqref{P} with $\kappa=\kappa^*$ is based on uniform estimates of the minimal $L^q_\alpha$-solutions $\{u^\kappa\}_{\kappa\in(0,\kappa^*)}$, using the ``stability'' of $u^\kappa$ and elliptic regularity theorems. The proof of assertion (iii) is based on bifurcation theory.
\begin{Remark}
Due to the unboundedness of the half space, it is not clear whether the linearized operator $h\mapsto h-G[p(u^\kappa)^{p-1}h]$ is Fredholm of index zero, which is an essential condition for the use of bifurcation theory. Although bifurcations of solutions to elliptic problems on unbounded domains were discussed in e.g. \cite{IOS03}, we provide a new method to recover the bifurcation theory, which is simpler and applies to more general settings than previous studies.
\end{Remark}

The rest of the paper is organized as follows. In Section~\ref{LaG}, we summarize properties of the integral kernel $G$ and the function space $L^q_\alpha$. In Section~\ref{smallk}, we prove the existence of a minimal $L^q_\alpha$-solution for small $\kappa$. In Section~\ref{weaksolsec}, we reduce problem~\eqref{P} into a weak form. In Section~\ref{EVPsec}, we study the linearized eigenvalue problem to obtain the ``stability'' of the minimal $L^q_\alpha$-solutions $u^\kappa$, and complete the proof of Theorem~\ref{Thm1}. In Section~\ref{unformest}, we obtain uniform estimates of the minimal $L^q_\alpha$-solutions $u^\kappa$, and prove the existence of an $L^q_\alpha$-solution to problem~\eqref{P} with $\kappa=\kappa^*$ under the condition $1<p<p_{JL}$. In Section~\ref{PfThm2}, we complete the proof of Theorem~\ref{Thm2}.

\section{Properties of the integral kernel $G$ and the function space $L^q_\alpha$}\label{LaG}
We first state some properties of the integral kernel $G$ and the function spaces $L^q_\alpha$, which play key roles in the proofs of main theorems. In what follows, we use $C$ to denote generic positive constants and point out that $C$ may take different values within a calculation.
\subsection{The integral inequality of the integral kernel $G$ on the spaces $L^q_\alpha$}
We first give the following integral inequality, which describes a fundamental property of the integral kernel $G$ on the spaces $L^q_\alpha$. The proof of Proposition~\ref{Glaa} is given in \ref{pLaG}.
\begin{Proposition}\label{Glaa}
Let $1<q\le r<\infty$ and $\alpha,\beta\in\R$ be such that
\[
\frac{1}{q}+\alpha<2,\quad \frac{1}{r}+\beta>-1,\quad\frac{1}{q}-\frac{1}{r}<\frac{2}{N},\quad \frac{N}{r}+\beta\ge \frac{N}{q}+\alpha-2.
\]
Then for any $f\in L^q_\alpha$,
\begin{equation}\label{Gla}
G[f]\in L^r_\beta,\quad \|G[f]\|_{L^r_\beta}\le C\|f\|_{L^q_\alpha}.
\end{equation}
\end{Proposition}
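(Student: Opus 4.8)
The plan is to establish the pointwise kernel estimate for $G$ first, and then deduce the $L^q_\alpha \to L^r_\beta$ bound by a weighted Young/Hölder argument. The starting point is the classical asymptotics of the Bessel-type fundamental solution: $E(r) \sim c_N r^{2-N}$ as $r \to 0$ (with logarithmic or constant behaviour for $N \le 2$) and $E(r) \sim c_N' r^{(1-N)/2} e^{-r}$ as $r \to \infty$, together with the difference structure $G(x,y) = E(|x-y|) - E(|x^*-y|)$. Because $|x^*-y| \ge |x-y|$ always (the reflection moves $x$ away from any point in the half space), $G \ge 0$; and since $E$ is smooth and decreasing, a mean value estimate in the $x_N$ variable gives the standard boundary gain $G(x,y) \le C \min\{1, \tfrac{x_N y_N}{|x-y|^2}\} \, E(|x-y|)$ up to adjusting $E$ near the diagonal, where $x_N, y_N$ denote the last coordinates of $x, y$. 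In practice I would record the two regimes separately: for $|x-y| \le 1$, $G(x,y) \lesssim \min\{|x-y|^{2-N}, x_N y_N |x-y|^{-N}\}$ (the near-diagonal, heat-kernel-like behaviour); for $|x-y| \ge 1$, $G(x,y) \lesssim e^{-c|x-y|}$, which makes the kernel effectively compactly supported at infinity and trivializes all large-scale integrability issues.

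With these bounds in hand, the estimate \eqref{Gla} splits into a ``local'' piece and a ``far'' piece. For the far piece the exponential decay of $G$ lets one dominate $\int_{|x-y|\ge 1} G(x,y)|f(y)|\,dy$ by a convolution with an $L^1$ (indeed Schwartz) kernel; combined with the fact that $h(t)$ is bounded above and below by positive constants on the region where both $x_N, y_N \ge 1/2$, and with a routine splitting near the boundary, one gets an $L^q_\alpha \to L^q_\alpha \hookrightarrow L^r_\beta$ bound here with no constraints beyond $f \in L^q_\alpha$ — the relations among $q,r,\alpha,\beta$ are not needed for this part. The substance is therefore entirely in the local piece, $G_{\mathrm{loc}}[f](x) := \int_{|x-y|\le 1} G(x,y) f(y)\,dy$, and this is where all four hypotheses
\[
\tfrac{1}{q}+\alpha<2,\qquad \tfrac{1}{r}+\beta>-1,\qquad \tfrac{1}{q}-\tfrac{1}{r}<\tfrac{2}{N},\qquad \tfrac{N}{r}+\beta\ge \tfrac{N}{q}+\alpha-2
\]
must be used.

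For the local piece I would run a weighted Hardy–Littlewood–Sobolev / Schur-test argument. Using $G(x,y) \lesssim \min\{|x-y|^{2-N}, x_N y_N |x-y|^{-N}\}$, one writes the desired inequality as boundedness of an operator with kernel $K(x,y) = h(x_N)^{\beta} G(x,y) h(y_N)^{-\alpha}$ from $L^q$ to $L^r$. The condition $\tfrac1q - \tfrac1r < \tfrac2N$ is exactly the HLS/fractional-integration condition that the homogeneous part $|x-y|^{2-N}$ is locally of the right order (it behaves like the Riesz potential $I_2$, which maps $L^q_{\mathrm{loc}} \to L^r_{\mathrm{loc}}$ precisely when $\tfrac1q - \tfrac1r \le \tfrac2N$, and strict inequality gives room to absorb the weights and the localization). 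The two one-sided conditions $\tfrac1q + \alpha < 2$ and $\tfrac1r + \beta > -1$ are the integrability thresholds ensuring, respectively, that $h(y_N)^{-\alpha q'}$ times the kernel is integrable in $y$ near the boundary slab $\{y_N < 1\}$ (so that the ``gain'' factor $y_N$ in $x_N y_N |x-y|^{-N}$ is not overwhelmed by a too-negative weight $h(y_N)^{-\alpha}$), and that $h(x_N)^{\beta r}$ is integrable in $x$ near the boundary after the $y$-integration — here the extra factor $x_N$ from the Poisson-type gain is what lets $\beta$ go down to just above $-1$ rather than just above $-1/r$. Finally $\tfrac{N}{r}+\beta \ge \tfrac{N}{q}+\alpha-2$ is the scaling/dimensional-balance identity (it is exactly what one gets by testing on $f$ a bump of scale $\lambda$ near a boundary point and letting $\lambda\to 0$), and it is what ties the local estimate together: after the Hölder split one is left with a one-dimensional weighted integral in the $x_N$-direction whose convergence is governed by this inequality, with equality being the borderline allowed case because the complementary strict inequality $\tfrac1q - \tfrac1r < \tfrac2N$ provides the needed slack. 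Concretely, I would split $\R^N_+$ into the boundary slab $S = \{0 < x_N < 1\}$ and the bulk $\{x_N \ge 1\}$, and within $S$ further into dyadic slabs $\{2^{-j-1} \le x_N < 2^{-j}\}$, estimate the contribution of each dyadic slab by Hölder in the tangential variables plus the one-dimensional weighted bound, and sum the resulting geometric series — convergent exactly under the stated hypotheses.

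The main obstacle I anticipate is bookkeeping the interaction between the two distinct decay mechanisms of $G$ near the boundary: the ``Riesz potential'' singularity $|x-y|^{2-N}$ on the diagonal and the additional Poisson-type vanishing $G(x,y) \lesssim x_N y_N |x-y|^{-N}$ when either point approaches $\partial\R^N_+$. One must interpolate between these two bounds (effectively $G \lesssim (x_N y_N)^\theta |x-y|^{2-N-2\theta}$ for a suitable $\theta \in [0,1]$ chosen in terms of $q,r,\alpha,\beta$) so that, after multiplying by the weights $h(x_N)^\beta$, $h(y_N)^{-\alpha}$, the tangential integration produces a clean power of $\max\{x_N,y_N\}$ whose one-dimensional integrability is controlled by precisely the four displayed inequalities. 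Getting the exponent $\theta$ right, and checking that the admissible range of $\theta$ is nonempty exactly when the hypotheses hold, is the delicate step; everything else (the far-field exponential decay, the elementary properties of $h$, the reduction to a Schur test) is routine.
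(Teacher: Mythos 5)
Your strategy rests on the same foundation as the paper's proof---the pointwise bound $G(x,y)\le\min\{E(|x-y|),\,4x_Ny_N|E'(|x-y|)|/|x-y|\}$ (the paper's Lemma B.1, obtained exactly as you say, by the mean value theorem and monotonicity of $|E'|$) together with the near/far asymptotics of $E$ and $E'$---and your reading of which hypothesis does what matches the paper's Remark 2.1. The implementation differs: the paper neither decomposes dyadically nor interpolates the kernel as $(x_Ny_N)^\theta|x-y|^{2-N-2\theta}$. Instead it proves a weighted integral lemma, $\bigl(\int_{\R^N_+}(G(x,y)h(y_N)^\theta)^s\,dy\bigr)^{1/s}\le Ch(x_N)^{2+\theta-N(1-1/s)}$ for $1\le s<N/(N-2)$ and $-1-1/s<\theta<N-1-N/s$ (Lemma B.2, proved by a nine-region case analysis), and then runs the classical three-factor H\"older proof of Young's inequality with two free weight exponents $\theta_1,\theta_2$, applying Lemma B.2 once in $y$ and once in $x$ via the symmetry of $G$. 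Your single-parameter interpolation plus dyadic slabs is a viable alternative, but note that the step you explicitly defer---verifying that an admissible $\theta$ exists precisely under the four stated inequalities---is where the paper spends most of its effort (the chain of equivalences for $\beta'$ and $\theta_1$ at the end of Appendix B.1), so it is the bulk of the proof rather than a final check.

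Two concrete slips in the sketch. First, the claim that the far piece ($|x-y|\ge1$) requires ``no constraints beyond $f\in L^q_\alpha$'' is false: if $1/q+\alpha\ge2$ then $\int_{\{y_N<1\}}y_N|f(y)|\,dy$ can diverge for $f\in L^q_\alpha$ (the paper's Remark 2.1 exhibits such an $f$ with $G[f]\equiv\infty$), and this divergence is seen by the far-field part of the kernel as well; the conditions $1/q+\alpha<2$ and $1/r+\beta>-1$, together with the boundary gain $x_Ny_N$ which persists in the regime $|x-y|\ge1$, are needed there too. Second, the embedding $L^q_\alpha\hookrightarrow L^r_\beta$ you invoke does not hold for $q<r$ on a domain of infinite measure; what closes the far piece is that convolution with an integrable bounded kernel maps $L^q$ into $L^r$ for $q\le r$ (Young's inequality), applied after the boundary weights have been absorbed by the gain factors. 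Both points are repairable with the tools you already set up, but as written those steps would fail.
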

\begin{Remark}
The assumption $1/q+\alpha<2$ is sharp. Indeed, let $\sigma\in(1/q,1)$ and
\[
f(x):=x_N^{-2}\left(\log\frac{1}{x_N}\right)^{-\sigma}\jf |x|<\frac{1}{2},\quad
f(x):=0\jf |x|\ge\frac{1}{2}.
\]
Then one can easily see that $f\in L^q_{2-1/q}$. But since $\integ{\R^N_+}x_Nf(x)dx=\infty$, $G[f]\equiv\infty$.

The assumption $1/r+\beta>-1$ is also sharp. Indeed, since the dual space of $L^q_\alpha$ is $L^{q/(q-1)}_{-\alpha}$ and $G$ is symmetric, \eqref{Gla} implies that
\begin{equation}\label{dual}
G[g]\in L^{q/(q-1)}_{-\alpha},\quad \|G[g]\|_{L^{q/(q-1)}_{-\alpha}}\le C\|g\|_{L^{r/(r-1)}_{-\beta}},\FA g\in L^{r/(r-1)}_{-\beta}
\end{equation}
by a duality argument. This together with the argument above implies that $1-1/r-\beta<2$, which is equivalent to $1/r+\beta>-1$, is a necessary condition for \eqref{Gla}.
\end{Remark}
\begin{Remark}
We mention some related known integral inequalities. Dou and Ma \cite{DM}*{Theorem 1.15} proved that under conditions
\[
1<q\le r<\infty,\quad\alpha\ge\beta,\quad\frac{1}{q}+\alpha<1,\quad \frac{1}{r}+\beta>0,\quad \frac{N}{r}+\beta=\frac{N}{q}+\alpha-2,
\]
the solution $v$ to elliptic problem $-\Delta v=f$~in~$\R^N_+$,\quad $v=0$~on~$\partial\R^N_+$ satisfies $\|v\|_{L^r_\beta}\le C\|f\|_{L^q_\alpha}$.

Integral inequalities for the Dirichlet--Green functions on weighted Lebesgue spaces on bounded domains have been studied. Dur\'{a}n, Sanmartino, and Toschi \cite{DST} proved that under conditions
\[
\alpha\ge 0,\quad\frac{1}{q}+\alpha<1,\quad q\alpha=r\beta,\quad \frac{1}{q}-\frac{1}{r}\le\frac{2}{N+\lceil q\alpha\rceil},
\]
the solution $v$ to elliptic problem $-\Delta v=f$~in~$\Omega$, $v=0$~on~$\partial\Omega$ satisfies $\|v\|_{L^r_{\beta}(\Omega)}\le C\|f\|_{L^q_\alpha(\Omega)}$. Here the norm $\|\cdot\|_{L^q_\alpha(\Omega)}$ is defined by $\|f\|_{L^q_\alpha(\Omega)}:=\|\dist(\cdot,\partial\Omega)^\alpha f\|_{L^q(\Omega)}$. Their proof was based on the Muckenhoupt class $A_q$. The same inequality under conditions $q\alpha=r\beta=1$ and $1/q-1/r<2/(N+1)$ was proved by Fila, Souplet, and Weissler \cite{FSW} with a different method (see also \cite[Theorem 49.2]{QS}).
\end{Remark}
\subsection{Sobolev embeddings}
We next state Sobolev embeddings of the $L^q_\alpha$ spaces. Since these are easy consequences of interpolations between the Hardy inequality and the Sobolev inequality, we omit the proof.
\begin{Proposition}\label{Lqa}
\begin{enumerate}[label={\rm(\roman*)}]
\item Let $\alpha\in[0,1]$ and $r\in[2,\infty)$ be such that
\[
\frac{1}{r}\ge\frac{1}{2}-\frac{1-\alpha}{N}\jf N\neq 2,\quad r<\frac{2}{\alpha}\jf N=2.
\]
Then $W^{1,2}_0(\R^N_+)\subset L^r_{-\alpha}$ and
\begin{equation}\label{W1toLqa}
\|f\|_{L^r_{-\alpha}}\le C\|f\|_{W^{1,2}(\R^N_+)}\quad\textrm{for any}\quad f\in W^{1,2}_0(\R^N_+).
\end{equation}
\item Let $\alpha\in[0,1]$ and $q\in[1,2]$ be such that
\[
\frac{1}{q}\le\frac{1}{2}+\frac{1-\alpha}{N}\jf N\neq 2,\quad
\frac{1}{q}<1-\frac{\alpha}{2}\jf N=2.
\]
Then $L^q_\alpha\subset W^{-1,2}(\R^N_+)$ and
\[
\|f\|_{W^{-1,2}(\R^N_+)}\le C\|f\|_{L^q_\alpha}\quad\textrm{for any}\quad f\in L^q_\alpha.
\]
\end{enumerate}
\end{Proposition}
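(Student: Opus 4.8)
The plan is to prove (i) by interpolating, via a generalized Hölder inequality, between the Hardy inequality on the half space and the Sobolev inequality, and then to deduce (ii) from (i) by a duality argument. I would organize (i) around the two model inequalities: the half-space Hardy inequality $\|x_N^{-1}f\|_{L^2(\R^N_+)}\le 2\|\partial_{x_N}f\|_{L^2(\R^N_+)}$ for $f\in W^{1,2}_0(\R^N_+)$ (which follows from the one-dimensional Hardy inequality applied in $x_N$ and integrated over $x'$, using that $f$ has vanishing trace on $\partial\R^N_+$), and the Sobolev embedding $W^{1,2}_0(\R^N_+)\subset L^{2^*}(\R^N_+)$ with $2^*=2N/(N-2)$ when $N\ge3$, respectively $W^{1,2}_0(\R^N_+)\subset L^s(\R^N_+)$ for every finite $s$ when $N=2$ and for $s=\infty$ when $N=1$.

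For (i), since $h(x_N)^{-\alpha}=x_N^{-\alpha}$ on $\{x_N<1\}$ and $h(x_N)^{-\alpha}=1$ on $\{x_N\ge1\}$, it suffices to bound $\int_{\R^N_+}|f|^r x_N^{-r\alpha}\,dx$ and $\int_{\R^N_+}|f|^r\,dx$ by $C\|f\|_{W^{1,2}(\R^N_+)}^r$; the second is exactly the Sobolev embedding once one checks that the hypothesis $1/r\ge 1/2-(1-\alpha)/N$ together with $\alpha\ge0$ forces $r$ into the admissible Sobolev range ($2\le r\le 2^*$ for $N\ge3$, $r$ finite for $N=2$). For the first integral there is nothing to do if $\alpha=0$; if $\alpha\in(0,1]$ and $N\ge3$ I would write
\[
|f|^r x_N^{-r\alpha}=\big(x_N^{-2}|f|^2\big)^{a}\big(|f|^{2^*}\big)^{b}\big(|f|^2\big)^{c},\qquad a=\tfrac{r\alpha}{2},\ \ b=\tfrac{(r-2)(N-2)}{4},\ \ c=1-a-b,
\]
so that $2a+2^*b+2c=r$; the conditions $\alpha\le 1$ and $1/r\ge 1/2-(1-\alpha)/N$ are precisely what makes $a,b,c\in[0,1]$ with $a+b+c=1$, whence Hölder's inequality with exponents $1/a,1/b,1/c$, followed by the Hardy and Sobolev inequalities, gives $\int_{\R^N_+}|f|^r x_N^{-r\alpha}\,dx\le C\|\nabla f\|_{L^2}^{2a+2^*b}\|f\|_{L^2}^{2c}\le C\|f\|_{W^{1,2}(\R^N_+)}^r$. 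For $N=2$ one runs the identical argument with $2^*$ replaced by a sufficiently large finite exponent $s$, the existence of such $s$ being guaranteed by $r<2/\alpha$; for $N=1$ one uses the elementary pointwise bound $|f(x)|\le x_N^{1/2}\|\partial_{x_N}f\|_{L^2}$ (and, in the borderline case $1/r=\alpha-1/2$, the sharp one-dimensional weighted Hardy inequality) in place of the critical Sobolev inequality.

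For (ii), note that $1/q\le 1/2+(1-\alpha)/N$ is equivalent to $1/q'\ge 1/2-(1-\alpha)/N$ with $q'=q/(q-1)$, and $q\in[1,2]$ corresponds to $q'\in[2,\infty]$, so the hypothesis of (ii) for the pair $(q,\alpha)$ is exactly the hypothesis of (i) for the pair $(q',\alpha)$ (with the analogous reformulation in the $N=2$ case). Hence (i) yields $W^{1,2}_0(\R^N_+)\subset L^{q'}_{-\alpha}$ continuously, at least for $q>1$; for $f\in L^q_\alpha$ and $\phi\in W^{1,2}_0(\R^N_+)$ the weighted Hölder inequality then gives $\big|\int_{\R^N_+}f\phi\,dx\big|\le\|f\|_{L^q_\alpha}\|\phi\|_{L^{q'}_{-\alpha}}\le C\|f\|_{L^q_\alpha}\|\phi\|_{W^{1,2}(\R^N_+)}$, so $f$ represents an element of $W^{-1,2}(\R^N_+)$ with $\|f\|_{W^{-1,2}(\R^N_+)}\le C\|f\|_{L^q_\alpha}$; the case $q=1$, which occurs only for $N=1$, follows directly from the bound $|f(x)|\le x_N^{1/2}\|\partial_{x_N}f\|_{L^2}$. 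The only real obstacle is bookkeeping: verifying that the interpolation exponents $a,b,c$ are admissible in every dimension and checking the few borderline exponents and the modified form of the Sobolev embedding when $N\le 2$; no genuine analytic difficulty arises once the exponent algebra is arranged as above.
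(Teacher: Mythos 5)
Your proof is correct and follows exactly the route the paper indicates: the paper omits the proof of this proposition, stating only that it is "an easy consequence of interpolations between the Hardy inequality and the Sobolev inequality," which is precisely your argument for (i), with (ii) obtained from (i) by the standard duality/H\"older pairing. The exponent bookkeeping checks out (in particular $c\ge 0$ is equivalent to $1/r\ge 1/2-(1-\alpha)/N$, and the borderline $N=1$ case is correctly handled by the weighted one-dimensional Hardy--Bliss inequality).
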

\begin{Remark}\label{120to-12}
It is an immediate consequence of Proposition~\ref{Lqa} that
\[
\|cv\|_{W^{-1,2}(\R^N_+)}\le C\|cv\|_{L^{q}_{\alpha}}\le C\|v\|_{L^r_{-\alpha}}\le C\|v\|_{W^{1,2}_0(\R^N_+)}
\]
for any $v\in W^{1,2}_0(\R^N_+)$ and $c\in L^s_\tau$ with $N/s<2-\tau$, $\tau\in[0,2)$. Here $1/q=1/2+(1-\alpha)/N$, $1/r=1/2-(1-\alpha)/N$, $\alpha=\tau/2$.
\end{Remark}
\begin{Remark}\label{120}
Assertion (ii) is mainly used to ``translate'' the sense of solutions to problem $-\Delta v+v=f$~in~$\R^N_+$, $v=0$~on~$\partial\R^N_+$, from the sense $v=G[f]$ into the weak sense in $W^{1,2}_0(\R^N_+)$, or conversely. Indeed, the following fact is well-known.
\begin{Proposition}\label{G120}
For any measurable function $f$ on $\R^N_+$ with $f,|f|\in W^{-1,2}(\R^N_+)$, $v:=G[f]$ is the unique weak solution to equation $-\Delta v+v=f$~in~$\R^N_+$, $v\in W^{1,2}_0(\R^N_+)$.
\end{Proposition}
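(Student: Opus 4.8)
The proof naturally divides into the standard existence--uniqueness for the weak formulation and the identification of the weak solution with the potential $G[f]$. The bilinear form $a(u,\phi):=\int_{\R^N_+}(\nabla u\cdot\nabla\phi+u\phi)\,dx$ is bounded and coercive on $W^{1,2}_0(\R^N_+)$, so by the Lax--Milgram theorem every $g\in W^{-1,2}(\R^N_+)$ admits a unique weak solution $Rg\in W^{1,2}_0(\R^N_+)$ of $-\Delta(\cdot)+(\cdot)=g$, and $\langle g,Rg\rangle=a(Rg,Rg)=\|g\|_{W^{-1,2}(\R^N_+)}^2$. Uniqueness for the problem is immediate by testing the homogeneous equation with the difference of two solutions, so it remains only to prove $Rf=G[f]$ a.e. Since $f,|f|\in W^{-1,2}(\R^N_+)$, the functions $f^{\pm}:=\tfrac12(|f|\pm f)$ are nonnegative elements of $W^{-1,2}(\R^N_+)$, and by linearity of $R$ and of $g\mapsto G[g]$ it suffices to treat $f\ge0$; once $G[f^{\pm}]$ are shown finite a.e., $G[f]=G[f^+]-G[f^-]$ is well defined.

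So assume $f\ge0$, and set $f_m:=\min\{f,m\}\,\mathbf 1_{\{|x|<m\}}$, which is bounded with compact support and satisfies $0\le f_m\uparrow f$ a.e. For each such $f_m$, I would first verify via the reflection identity $G[f_m](x)=w_m(x)-w_m(x^{*})$, where $w_m:=E*\widetilde{f_m}$ is the potential over $\R^N$ of the zero extension $\widetilde{f_m}\in L^2(\R^N)$, that $G[f_m]\in W^{2,2}(\R^N_+)$, has vanishing trace, and solves $-\Delta G[f_m]+G[f_m]=f_m$ classically; hence $G[f_m]\in W^{1,2}_0(\R^N_+)$ is the weak solution with datum $f_m$, that is $G[f_m]=Rf_m$. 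The crucial point is the resulting uniform energy bound: since $f_m$ is a genuine $L^2$ function of compact support, the pairing is an honest integral, so $\int_{\R^N_+}f_mG[f_m]\,dx=\langle f_m,Rf_m\rangle=\|f_m\|_{W^{-1,2}(\R^N_+)}^2$, while $0\le f_m\le f$ with $f$ a nonnegative element of $W^{-1,2}(\R^N_+)$ forces $\|f_m\|_{W^{-1,2}(\R^N_+)}\le\|f\|_{W^{-1,2}(\R^N_+)}$ (the $W^{-1,2}$-norm of a nonnegative functional is governed by its action on nonnegative test functions). Letting $m\to\infty$ and using monotone convergence, $f_mG[f_m]\uparrow fG[f]$ pointwise, so $\int_{\R^N_+}fG[f]\,dx\le\|f\|_{W^{-1,2}(\R^N_+)}^2<\infty$; in particular $G[f]<\infty$ a.e.

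Finally I would pass to the limit. By monotone convergence $G[f_m]\uparrow G[f]$ pointwise, and $a(G[f_m],G[f_m])=\int_{\R^N_+}f_mG[f_m]\,dx\le\|f\|_{W^{-1,2}(\R^N_+)}^2$ is bounded, so $\{G[f_m]\}$ is bounded in $W^{1,2}_0(\R^N_+)$; a standard subsequence argument then gives $G[f]\in W^{1,2}_0(\R^N_+)$ with $G[f_m]\rightharpoonup G[f]$ weakly in $W^{1,2}_0(\R^N_+)$. Passing to the limit in $a(G[f_m],\phi)=\langle f_m,\phi\rangle=\int_{\R^N_+}f_m\phi\,dx$ for $\phi\in C_c^\infty(\R^N_+)$ (the right-hand side converging by dominated convergence, the integrand being dominated by $f|\phi|\in L^1$) and then using density of $C_c^\infty(\R^N_+)$ in $W^{1,2}_0(\R^N_+)$ shows that $G[f]$ is the weak solution with datum $f$, hence $G[f]=Rf$. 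Undoing the splitting, $G[f]=G[f^+]-G[f^-]=Rf^+-Rf^-=Rf$ in general, which is the assertion. The main obstacle is the energy estimate of the second paragraph: this is exactly the place where the functional-analytic meaning of $f\in W^{-1,2}(\R^N_+)$ must be reconciled with the pointwise size of the potential $G[f]$, and it is positivity --- both of the kernel $G$ and in the comparison of $W^{-1,2}$-norms of nonnegative objects --- that allows one to obtain $\int_{\R^N_+}fG[f]\,dx<\infty$ without circular use of the identity $Rf=G[f]$ being proved.
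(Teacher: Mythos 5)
Your argument is correct. Note that the paper itself offers no proof of Proposition~\ref{G120}: it is stated inside Remark~\ref{120} as a well-known fact, so there is nothing to compare against line by line. Your proof is essentially the standard one — Lax--Milgram for existence and uniqueness, reduction to $f\ge 0$ via $f^{\pm}=\tfrac12(|f|\pm f)$ (which is exactly where the hypothesis $|f|\in W^{-1,2}(\R^N_+)$ is used), identification of $G[f_m]$ with the weak solution for bounded compactly supported truncations via the reflection formula, the monotonicity of the $W^{-1,2}$-norm on nonnegative functionals to get the uniform energy bound, and monotone plus weak convergence to pass to the limit. The one place where the wording slightly outruns the logic is the claim that $\int_{\R^N_+}fG[f]\,dx<\infty$ already gives $G[f]<\infty$ a.e.: this only controls $G[f]$ on $\{f>0\}$. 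But the gap is cosmetic, since the uniform $W^{1,2}_0$-bound on $G[f_m]$ established in the same breath, combined with $G[f_m]\uparrow G[f]$ and Fatou's lemma, gives $G[f]\in L^2(\R^N_+)$ and hence finiteness a.e.; the rest of the limiting argument then goes through as you describe.
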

\end{Remark}
\begin{Remark}\label{sigmaRmk}
Let $\alpha\in[0,1)$ and $r\in(2,\infty)$ be such that
\[
\frac{1}{r}\in\left(\frac{1}{2}-\frac{1-\alpha}{N},\frac{1}{2}\right).
\]
Then one can easily see from assertion (i) and the H\"{o}lder inequality that 
\[
\|v\|_{L^r_{-\alpha}}\le C\|v\|_{L^2(\R^N_+)}^{\sigma}\|v\|^{1-\sigma}_{W^{1,2}(\R^N_+)}\FA v\in W^{1,2}_0(\R^N_+)
\]
for small enough $\sigma\in(0,1)$. This implies that
\[
\integ{\R^N_+}cv^2dx\le C\|v\|_{L^2(\R^N_+)}^{2\sigma}\|v\|^{2(1-\sigma)}_{W^{1,2}(\R^N_+)}\FA v\in W^{1,2}_0(\R^N_+)
\]
for any $c\in L^s_\tau$ with $N/s<2-\tau$, $\tau\in[0,2)$. Thus, standard $L^2$-framework arguments for linear elliptic equations (see e.g. \cite[Chapter 8]{GT}) is applicable to solutions $v\in W^{1,2}_0(\R^N_+)$ to equation
\[
-\Delta v+cv=f\jn\R^N_+
\]
with $c,f\in L^s_\tau$, $N/s<2-\tau$, $\tau\in[0,2)$. More generally, this argument is valid for $c,f\in L^{s_1}_{\tau_1}+\ldots+L^{s_k}_{\tau_k}$ with $N/s_i<2-\tau_i$, $\tau\in[0,2)$, for each $i\in\{1,\ldots,k\}$.
\end{Remark}
\subsection{The compactness of operator $f\mapsto G[af]$}
We state the following compactness theorem for the integral operator $f\mapsto G[af]$. This theorem plays a key role in the proof of Theorem~\ref{Thm2}. Specifically, it verifies that the linearized operator $h\mapsto h-G[p(u^\kappa)^{p-1}h]$ is Fredholm of index zero, and enables us to use bifurcation theory to obtain the multiplicity assertion (iii). 
The proof of Proposition~\ref{cpt} is given in \ref{pLaG}.
\begin{Proposition}\label{cpt}
Let $r\in(1,\infty)$, $\beta\in\R$, $s'\in(N/2,\infty)$, and $\theta\in\R$ be such that
\begin{equation}\label{Cptassump}
\frac{1}{r}+\frac{1}{s'}< 1,\quad \frac{1}{r}+\frac{1}{s'}+\beta+\theta<2,\quad\frac{1}{r}+\beta>-1,\quad \frac{N}{s'}+\theta\le 2. 
\end{equation}
Let $a\in L^{s'}_{\theta}$ and set $T_af:=G[af]$ for a measurable function $f$ on $\R^N_+$. Then $T_a$ is a compact operator from $L^r_\beta$ to itself.
\end{Proposition}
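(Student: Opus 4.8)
The plan is to decompose $T_a$ as a norm limit of operators that are each visibly compact, so that compactness passes to the limit. First I would establish that $T_a$ is bounded on $L^r_\beta$: writing $af$ and estimating, Hölder's inequality with exponents matching $1/r$ and $1/s'$ reduces the question to an estimate of the form $\|G[af]\|_{L^r_\beta}\le C\|a\|_{L^{s'}_\theta}\|f\|_{L^r_\beta}$, and the first, second, third inequalities in \eqref{Cptassump} together with $s'>N/2$ are exactly what is needed to invoke Proposition~\ref{Glaa} with the intermediate exponent $\tilde q$ defined by $1/\tilde q=1/r+1/s'$ and weight exponent $\tilde\alpha=\beta+\theta$ (one checks $1/\tilde q+\tilde\alpha<2$, the gap condition $1/\tilde q-1/r<2/N$ which is $1/s'<2/N$, and $N/r+\beta\ge N/\tilde q+\tilde\alpha-2$ which rearranges to $N/s'+\theta\le 2$). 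So $T_a$ is bounded.

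The core of the argument is an approximation. I would introduce, for $R>1$ and $\varepsilon\in(0,1)$, the truncated kernel
\[
G_{R,\varepsilon}(x,y):=G(x,y)\,\mathbf{1}_{\{|x|<R,\ |y|<R,\ |x-y|>\varepsilon,\ x_N>\varepsilon,\ y_N>\varepsilon\}},
\]
and set $T_a^{R,\varepsilon}f:=\int_{\R^N_+}G_{R,\varepsilon}(x,y)a(y)f(y)\,dy$. On the region where it is supported, $G_{R,\varepsilon}$ is a bounded kernel with support in a bounded set, and the weights $h(x_N)^\alpha$, $h(y_N)^{-\alpha}$, $h(y_N)^{-\theta}$ are all bounded above and below there; hence the operator $f\mapsto h(x_N)^\beta T_a^{R,\varepsilon}f$ from $L^r_\beta$ to $L^r$ factors through multiplication by $a\in L^{s'}_\theta\subset L^{s'}_{\mathrm{loc}}$ followed by an integral operator with bounded kernel on a bounded domain, which is Hilbert–Schmidt—hence compact—on $L^r$ of that domain (after a further Hölder step to absorb $a$, using $1/r+1/s'<1$). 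Compactness of $T_a^{R,\varepsilon}$ on $L^r_\beta$ follows.

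It then remains to show $\|T_a-T_a^{R,\varepsilon}\|_{L^r_\beta\to L^r_\beta}\to 0$ as $R\to\infty$, $\varepsilon\to 0$. The difference is $G[a f\,\chi]$-type terms where $\chi$ is the characteristic function of one of the ``bad'' regions: $\{|y|\ge R\}$, $\{y_N\le\varepsilon\}$, $\{|x|\ge R\}$, $\{x_N\le\varepsilon\}$, $\{|x-y|\le\varepsilon\}$. For the regions cutting the $y$ (source) variable I would run exactly the boundedness estimate above but with $a$ replaced by $a\chi$, and use dominated convergence / absolute continuity of the integral to get $\|a\chi\|_{L^{s'}_\theta}\to 0$; similarly for the diagonal piece $|x-y|\le\varepsilon$, using that $G(x,y)$ is locally integrable against the relevant weights, so the small-diagonal contribution to the kernel norm is negligible. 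The regions cutting the $x$ (target) variable are handled by the dual estimate \eqref{dual}: since the dual of $L^r_\beta$ is $L^{r/(r-1)}_{-\beta}$ and $G$ is symmetric, the operator norm of $f\mapsto \mathbf 1_{\{|x|\ge R\}}G[af]$ equals that of its adjoint $g\mapsto a\,G[\mathbf 1_{\{|x|\ge R\}}g]$, which again goes to zero by the same mechanism. Collecting the finitely many pieces gives the norm convergence, and a norm limit of compact operators is compact.

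The main obstacle I anticipate is bookkeeping the weights through all five cutoff regions simultaneously—in particular making sure the intermediate exponent $\tilde q$ and weight $\tilde\alpha=\beta+\theta$ stay in the admissible range of Proposition~\ref{Glaa} when one of $\beta,\theta$ is negative, and handling the borderline case $N/s'+\theta=2$ (where Proposition~\ref{Glaa} is still applicable but the gain of integrability is only ``critical'', so the truncation at $|y|\ge R$ must be argued via equi-integrability of $|a|^{s'}h(y_N)^{s'\theta}$ rather than a crude tail bound). The two strict inequalities $1/r+1/s'<1$ and $1/r+1/s'+\beta+\theta<2$ are precisely the slack that makes the Hilbert–Schmidt step and the limiting step go through, so I would keep track of exactly where each is consumed.
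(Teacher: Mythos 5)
Your boundedness step is exactly the paper's: the intermediate pair $1/\tilde q=1/r+1/s'$, $\tilde\alpha=\beta+\theta$ together with H\"older gives $\|T_a\|_{L^r_\beta\to L^r_\beta}\le C\|a\|_{L^{s'}_\theta}$, and this is also the estimate that lets compactness pass to the norm limit. The core compactness mechanism, however, is where your plan has a genuine gap. The problematic piece is the truncation in the \emph{target} variable $x$. You propose to handle $f\mapsto \mathbf 1_{\{|x|\ge R\}}G[af]$ by duality, but the adjoint of this operator is $g\mapsto a\,G[\mathbf 1_{\{|x|\ge R\}}g]$: the cutoff lands on $g$, not on $a$, so the ``same mechanism'' (smallness of $\|a\chi\|_{L^{s'}_\theta}$) does not apply and the resulting bound is $C\|a\|_{L^{s'}_\theta}\|g\|$, which is bounded but not small. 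Some tightness-at-infinity input is genuinely needed here, since for $a$ merely bounded and compactly supported the functions $G[af]$ still live on all of $\R^N_+$. A correct version must either order the truncations (first localize $a$ to a compact subset of $\R^N_+$, then exploit the exponential decay of $G(x,y)$ and the bound $G(x,y)\le Cx_Ny_N|E'(|x-y|)|/|x-y|$ for $x$ far from, respectively close to the boundary relative to, $\supp a$), or sidestep the $x$-truncation entirely. A secondary soft spot: for the diagonal piece $\{|x-y|\le\varepsilon\}$ you need an operator-norm estimate on $L^r_\beta$ of the kernel $G(x,y)a(y)\mathbf 1_{\{|x-y|\le\varepsilon\}}$, and since $E$ need not lie in $L^m_{\mathrm{loc}}$ for the H\"older-conjugate exponent $1/m=1-1/r-1/s'$, a crude H\"older bound can fail; one has to rerun the Schur-type interpolation from the proof of Proposition~\ref{Glaa} on the truncated kernel rather than appeal to ``local integrability''.

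For comparison, the paper's proof avoids both issues: it approximates $a$ by $a_k\in C^\infty_c(\R^N_+)$ in $L^{s'}_\theta$ (the only approximation used), and for compactly supported smooth $a$ it passes to the adjoint $T_a'g=aG[g]$ on $L^{r'}_{-\beta}$. There the multiplication by $a$ localizes \emph{after} applying $G$, so interior $L^p$ elliptic estimates give $\|G[g]\|_{W^{2,r'}(\Omega)}\le C\|g\|_{L^{r'}_{-\beta}}$ on a neighborhood $\Omega\supset\supp a$, and Rellich's compact embedding finishes. That is the role duality plays in the paper --- to relocate the cutoff to the output side where elliptic regularity is available --- which is different from the role you assign it. Your kernel-truncation scheme can likely be repaired along the lines above, but as written the $x$-side limiting step does not go through.
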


\section{Existence of an $L^q_\alpha$-solution for small $\kappa$}\label{smallk}
In what follows, we always assume the same conditions as in Theorem~\ref{Thm1}. Let
\[
\mathcal{K}:=\left\{\kappa>0:\textrm{problem \eqref{P} possesses an $L^q_\alpha$-solution}\right\},\quad\kappa^*:=\sup\mathcal{K}.
\]
In this section, we prove the following proposition.
\begin{Proposition}\label{Kform}
Assume the same conditions as in Theorem~{\rm\ref{Thm1}}. Then the set $\mathcal{K}$ equals to either $(0,\kappa^*]$ or $(0,\kappa^*)$. Furthermore, $\kappa^*\in(0,\infty]$.
\end{Proposition}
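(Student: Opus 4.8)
The plan is to prove two facts and then read off the statement: (a) every sufficiently small $\kappa>0$ belongs to $\mathcal{K}$, and (b) if $\kappa_0\in\mathcal{K}$ and $0<\kappa_1<\kappa_0$, then $\kappa_1\in\mathcal{K}$. Granting (a) and (b), $\mathcal{K}$ is a nonempty subset of $(0,\infty)$ with $(0,\kappa_0]\subseteq\mathcal{K}$ for every $\kappa_0\in\mathcal{K}$, so that $\mathcal{K}=\bigcup_{\kappa_0\in\mathcal{K}}(0,\kappa_0]$. If $s:=\sup\mathcal{K}<\infty$ then $(0,s)\subseteq\mathcal{K}\subseteq(0,s]$, hence $\mathcal{K}=(0,\kappa^*]$ when $\kappa^*=s\in\mathcal{K}$ and $\mathcal{K}=(0,\kappa^*)$ otherwise; if $s=\infty$ then $\mathcal{K}=(0,\infty)=(0,\kappa^*)$ with $\kappa^*=\infty$. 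In all cases $\kappa^*=\sup\mathcal{K}\in(0,\infty]$ since $\mathcal{K}\neq\emptyset$.

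For (a) I would run a contraction mapping argument on the positive cone of $L^q_\alpha$. For $u\in L^q_\alpha$ with $u\ge0$ one has $u^p\in L^{q/p}_{p\alpha}$ with $\|u^p\|_{L^{q/p}_{p\alpha}}=\|u\|_{L^q_\alpha}^p$, and the exponents $(q/p,p\alpha)$ (domain) and $(q,\alpha)$ (range) satisfy the hypotheses of Proposition~\ref{Glaa}: $1<q/p\le q$ because $q>p>1$; $\frac pq+p\alpha<2$ and $(p-1)\bigl(\frac Nq+\alpha\bigr)\le2$ follow from \eqref{qalpha}; $\frac pq+p\alpha>-1$ is trivial; and $\frac pq-\frac1q=\frac{p-1}q<\frac2N$ is \eqref{qNp-12}. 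Hence $\|G[u^p]\|_{L^q_\alpha}\le C\|u\|_{L^q_\alpha}^p$, and combining $|a^p-b^p|\le p(a+b)^{p-1}|a-b|$ for $a,b\ge0$ with Hölder's inequality in the dual pair of weights $(p-1)\alpha$ and $\alpha$ (exponents $q/(p-1)$ and $q$) gives $\|G[u^p]-G[v^p]\|_{L^q_\alpha}\le Cp(\|u\|_{L^q_\alpha}+\|v\|_{L^q_\alpha})^{p-1}\|u-v\|_{L^q_\alpha}$. Since $P[\mu]\in L^q_\alpha$ by \eqref{pmu} and $p>1$, choosing first $R>0$ small (so that $CR^p\le R/2$ and $Cp(2R)^{p-1}<1$) and then $\kappa>0$ small (so that $\kappa\|P[\mu]\|_{L^q_\alpha}\le R/2$), the map $\Phi(u):=\kappa P[\mu]+G[u^p]$ is a contraction of $\{u\in L^q_\alpha:0\le u,\ \|u\|_{L^q_\alpha}\le R\}$ into itself; its fixed point $u$ satisfies \eqref{IE}, since $u=\kappa P[\mu]+G[u^p]\ge\kappa P[\mu]>0$ a.e. (note $P[\mu]>0$ on $\R^N_+$ because $P>0$ and $\mu$ is nontrivial and nonnegative). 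So $\kappa\in\mathcal{K}$ for all small $\kappa>0$.

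For (b), let $u_0$ be an $L^q_\alpha$-solution for $\kappa=\kappa_0$ and let $0<\kappa_1<\kappa_0$. Then $G[u_0^p]=u_0-\kappa_0P[\mu]$ is finite a.e. (both terms lie in $L^q_\alpha$, hence are finite a.e.), and $u_0$ is an $L^q_\alpha$-supersolution for $\kappa_1$ since $u_0=\kappa_0P[\mu]+G[u_0^p]\ge\kappa_1P[\mu]+G[u_0^p]>0$ a.e. Define the monotone iteration $v_0:=u_0$, $v_{n+1}:=\kappa_1P[\mu]+G[v_n^p]$. Using that $G$ and $t\mapsto t^p$ are monotone on $[0,\infty)$, an induction gives $\kappa_1P[\mu]\le v_{n+1}\le v_n\le u_0$ for all $n$; in particular $v_n^p\le u_0^p$, so each $G[v_n^p]\le G[u_0^p]$ is finite a.e., and each $v_n\in L^q_\alpha$. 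Hence $v_n\downarrow v$ pointwise a.e. with $\kappa_1P[\mu]\le v\le u_0$, so $v\in L^q_\alpha$; and since $G(x,\cdot)v_n^p\le G(x,\cdot)u_0^p\in L^1(\R^N_+)$ for a.a. $x$, dominated convergence lets us pass to the limit in the recursion to obtain $v=\kappa_1P[\mu]+G[v^p]\ge\kappa_1P[\mu]>0$ a.e. Thus $v$ is an $L^q_\alpha$-solution for $\kappa_1$, i.e. $\kappa_1\in\mathcal{K}$.

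The step requiring the most care is the passage to the limit in (b): one must verify the a.e. finiteness of $G[u_0^p]$ — which is precisely what is guaranteed by $u_0$ being an $L^q_\alpha$-solution together with $P[\mu]\in L^q_\alpha$, and which supplies the $L^1$ dominating function for the monotone iterates — and one must check that the limit is strictly positive rather than merely nonnegative, which is ensured by the lower bound $v_n\ge\kappa_1P[\mu]$ built into the iteration. By contrast, the verification of the hypotheses of Proposition~\ref{Glaa} in step (a) is routine bookkeeping with \eqref{qalpha} and \eqref{qNp-12}.
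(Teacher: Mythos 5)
Your proof is correct. The contraction-mapping step (a) is essentially the paper's own argument: the same exponent bookkeeping to apply Proposition~\ref{Glaa} with domain pair $(q/p,p\alpha)$ and range pair $(q,\alpha)$, the same Lipschitz estimate for $u\mapsto G[u^p]$, and the same smallness choices for the radius and for $\kappa$. Step (b), however, takes a genuinely different (though closely related) route. The paper first establishes (Lemma~\ref{L3.1}) that the existence of an $L^q_\alpha$-supersolution is equivalent to the existence of a \emph{minimal} $L^q_\alpha$-solution, via the \emph{increasing} iteration $U^\kappa_0=P[\mu]$, $U^\kappa_{j+1}=\kappa P[\mu]+G[(U^\kappa_j)^p]$, which is trapped below any supersolution; it then verifies (Lemma~\ref{existence}) that $v:=u^{\kappa'}-(\kappa'-\kappa)P[\mu]$ is a supersolution at the smaller level $\kappa<\kappa'$. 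You instead run a \emph{decreasing} iteration started at the known solution $u_0$ for the larger parameter, which is itself a supersolution for the smaller parameter, and pass to the limit by dominated convergence with dominating function $G(x,\cdot)\,u_0^p$; the built-in lower bound $v_n\ge\kappa_1 P[\mu]$ then gives positivity of the limit. Both are standard monotone sub/supersolution schemes and both are valid; your construction is slightly more direct but need not produce the minimal solution, which is immaterial for Proposition~\ref{Kform} as stated yet is exactly what the paper's upward iteration supplies for Theorem~\ref{Thm1}(i) and for the later stability arguments in Section~\ref{EVPsec}. One typographical slip: the trivial condition you quote as $\frac{p}{q}+p\alpha>-1$ should be $\frac{1}{q}+\alpha>-1$, the hypothesis of Proposition~\ref{Glaa} on the range pair; both are obviously satisfied, so nothing is affected.
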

The proof of Proposition~\ref{Kform} is based on the supersolution method and the contraction mapping theorem via Proposition~\ref{Glaa}. We first ensure that the supersolution method is available.
\begin{Lemma}\label{L3.1}
Assume the same conditions as in Theorem~{\rm\ref{Thm1}}. Then for any $\kappa>0$, the following assertions are equivalent.
\begin{enumerate}[label={\rm(\roman*)}]
\item $\kappa\in\mathcal{K}$.
\item Problem~\eqref{P} possesses a minimal $L^q_\alpha$-solution.
\item Problem~\eqref{P} possesses an $L^q_\alpha$-supersolution.
\end{enumerate}
Furthermore, if $u$ is a minimal $L^q_\alpha$-solution to problem~\eqref{P} and $v$ is an $L^q_\alpha$-supersolution to problem~\eqref{P}, then $u\le v$ a.e. in $\R^N_+$.
\end{Lemma}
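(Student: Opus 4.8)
The plan is to prove the cycle of implications (ii) $\Rightarrow$ (i) $\Rightarrow$ (iii) $\Rightarrow$ (ii), together with the final comparison statement. The implications (ii) $\Rightarrow$ (i) and (i) $\Rightarrow$ (iii) are immediate from the definitions: a minimal $L^q_\alpha$-solution is in particular an $L^q_\alpha$-solution, and an $L^q_\alpha$-solution is in particular an $L^q_\alpha$-supersolution (the inequality in the definition of supersolution holds with equality). So the entire content is in (iii) $\Rightarrow$ (ii), namely: given an $L^q_\alpha$-supersolution $v$, construct a minimal $L^q_\alpha$-solution $u$ sandwiched below $v$.

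For this I would use monotone iteration. Set $u_0 := \kappa P[\mu]$ and define inductively $u_{n+1}(x) := \kappa P[\mu](x) + G[u_n^p](x)$. Since $G$ has a positive kernel, the map $w \mapsto \kappa P[\mu] + G[w^p]$ is monotone on nonnegative functions; combined with $u_0 = \kappa P[\mu] \le \kappa P[\mu] + G[u_0^p] = u_1$ and $u_0 = \kappa P[\mu] \le v$ (since $v$ is a positive supersolution with $v \ge \kappa P[\mu] + G[v^p] \ge \kappa P[\mu]$), an easy induction gives $u_0 \le u_1 \le \cdots \le u_n \le \cdots \le v$ a.e. in $\R^N_+$. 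Hence $u := \lim_{n\to\infty} u_n$ exists pointwise a.e., satisfies $0 < \kappa P[\mu] \le u \le v$, and by monotone convergence passes to the limit in the iteration to yield $u(x) = \kappa P[\mu](x) + G[u^p](x)$ for a.a. $x$. Since $u \le v \in L^q_\alpha$ and $u \ge 0$, we get $u \in L^q_\alpha$, so $u$ is an $L^q_\alpha$-solution. To see $u$ is minimal, let $\tilde u$ be any $L^q_\alpha$-solution; then $\tilde u$ is in particular an $L^q_\alpha$-supersolution, so the same induction (now with $\tilde u$ in place of $v$) gives $u_n \le \tilde u$ for all $n$, hence $u \le \tilde u$ a.e. This simultaneously establishes the final comparison claim ``$u \le v$ a.e.'' for the minimal solution $u$ against any supersolution $v$, since the construction produced exactly $u \le v$, and minimality makes $u$ independent of the particular $v$ chosen.

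The one genuine subtlety — and the step I expect to be the main obstacle — is ensuring the iteration is well-defined in $L^q_\alpha$, i.e.\ that $G[u_n^p]$ is finite and lies in $L^q_\alpha$ at each stage, and more importantly that the limit object makes sense. Here the hypotheses \eqref{qalpha} on $(q,\alpha)$ enter decisively. Since $v \in L^q_\alpha$, we have $v^p \in L^{q/p}_{p\alpha}$, and one checks that the pair $(q/p, p\alpha)$ together with the target $(q,\alpha)$ satisfies the conditions of Proposition~\ref{Glaa}: indeed $\tfrac{p}{q} + p\alpha < 2$ is the first inequality in \eqref{qalpha} multiplied by $p$; the scaling relation $\tfrac{N}{q} + \alpha \ge \tfrac{Np}{q} + p\alpha - 2$ is the second inequality in \eqref{qalpha}; and $\tfrac{p}{q} - \tfrac1q = \tfrac{p-1}{q} < \tfrac2N$ follows from \eqref{qNp-12}. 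Therefore $G[v^p] \in L^q_\alpha$ with $\|G[v^p]\|_{L^q_\alpha} \le C\|v\|_{L^q_\alpha}^p$, and by monotonicity $G[u_n^p] \le G[v^p] \in L^q_\alpha$ for every $n$, so every $u_n$ is finite a.e.\ and bounded in $L^q_\alpha$ by $\|\kappa P[\mu]\|_{L^q_\alpha} + C\|v\|_{L^q_\alpha}^p$, using \eqref{pmu}. Dominated convergence (dominating function $v^p$, integrable against $G(x,\cdot)$ for a.a.\ $x$ since $G[v^p](x) < \infty$) then justifies passing the limit through $G$, closing the argument.
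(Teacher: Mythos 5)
Your proposal is correct and follows essentially the same route as the paper: monotone iteration $u_{n+1}=\kappa P[\mu]+G[u_n^p]$ starting from $\kappa P[\mu]$, squeezed below the given supersolution, with monotone convergence to pass to the limit and the observation that any solution is itself a supersolution yielding minimality. The extra care you take in verifying via Proposition~\ref{Glaa} that the iteration stays in $L^q_\alpha$ matches the computation the paper carries out in the proof of Proposition~\ref{Kform}.
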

\begin{proof}
We first prove the equivalence of (i), (ii), and (iii). Since it is clear that (ii)$\implies$(i)$\implies$(iii), it suffices to show that (iii)$\implies$(ii). We define approximate solutions $\{U^\kappa_j\}_{j\in\{0,1,\ldots\}}$ to problem~\eqref{P} by
\[
U^\kappa_0:=P[\mu],\quad U^\kappa_{j+1}:=\kappa P[\mu]+G[(U^\kappa_j)^p]\for j\in\{0,1,\ldots\}.
\]
Let $\kappa$ satisfy (iii). For any $L^q_\alpha$-supersolution $v$ to problem~\eqref{P}, it follows inductively from the definition of an $L^q_\alpha$-supersolution that
\[
U^\kappa_0\le U^\kappa_1\le\ldots\le v\quad\textrm{a.e. in}\quad \R^N_+.
\]
Thus the limit
\[
u^\kappa(x):=\lim_{j\to\infty}U^\kappa_j(x)
\]
exists and satisfies $u^\kappa(x)\le v(x)$ for a.a. $x\in\R^N_+$. This implies that $u^\kappa\in L^q_\alpha$ and that $u^\kappa\le u$ a.e. in $\R^N_+$ for any $L^q_\alpha$-solution $u$ to problem \eqref{P}. Furthermore, it follows from the monotone convergence theorem that $u^\kappa=\kappa P[\mu]+G[(u^\kappa)^p$] a.e. in $\R^N_+$, which proves that (iii)$\implies$ (ii). This proof also implies that if $u$ is a minimal $L^q_\alpha$-solution to problem~\eqref{P} and $v$ is an $L^q_\alpha$-supersolution to problem~\eqref{P}, then $u\le v$ a.e. in $\R^N_+$, and the proof of Lemma~\ref{L3.1} is complete.
\end{proof}
In what follows, let $u^\kappa$ denote the minimal $L^q_\alpha$-solution to problem~\eqref{P} for $\kappa\in\mathcal{K}$.
\begin{Lemma}\label{existence}
Assume the same conditions as in Theorem~{\rm\ref{Thm1}}. Then the following properties hold.
\begin{enumerate}[label={\rm(\roman*)}]
\item $(0,\kappa]\subset\mathcal{K}$ for any $\kappa\in\mathcal{K}$.
\item $u^\kappa<u^{\kappa'}$ a.e. in $\R^N_+$ for any $\kappa,\kappa'\in\mathcal{K}$ with $\kappa<\kappa'$.
\end{enumerate}
\end{Lemma}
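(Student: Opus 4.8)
The plan is to derive both assertions from the supersolution method of Lemma~\ref{L3.1} together with the strict positivity of $P[\mu]$ on $\R^N_+$, which is where the hypothesis that $\mu$ is nontrivial and nonnegative enters (recall $P(x,z)>0$ for all $x\in\R^N_+$, $z\in\R^{N-1}$, so $P[\mu](x)>0$ for every $x\in\R^N_+$).

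For assertion (i), I would fix $\kappa\in\mathcal{K}$ and $0<\kappa'\le\kappa$; the case $\kappa'=\kappa$ is immediate, so suppose $\kappa'<\kappa$. By Lemma~\ref{L3.1}, problem~\eqref{P} with parameter $\kappa$ has a minimal $L^q_\alpha$-solution $u^\kappa\in L^q_\alpha$, satisfying $u^\kappa=\kappa P[\mu]+G[(u^\kappa)^p]$ a.e. Since $P[\mu]\ge 0$ and $G[(u^\kappa)^p]\ge 0$, while $\kappa'P[\mu]>0$ everywhere, we obtain
\[
u^\kappa=\kappa P[\mu]+G[(u^\kappa)^p]\ge\kappa'P[\mu]+G[(u^\kappa)^p]>0\aein\R^N_+,
\]
so $u^\kappa$ is an $L^q_\alpha$-supersolution to problem~\eqref{P} with parameter $\kappa'$. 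Lemma~\ref{L3.1} then gives $\kappa'\in\mathcal{K}$, which proves (i).

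For assertion (ii), take $\kappa<\kappa'$ with $\kappa,\kappa'\in\mathcal{K}$. The same computation (now with $u^{\kappa'}$ at the larger parameter) shows that $u^{\kappa'}$ is an $L^q_\alpha$-supersolution to problem~\eqref{P} with parameter $\kappa$, and since $u^\kappa$ is the minimal $L^q_\alpha$-solution at parameter $\kappa$, the last statement of Lemma~\ref{L3.1} yields $u^\kappa\le u^{\kappa'}$ a.e. in $\R^N_+$. To promote this to a strict inequality, I would subtract the two integral identities:
\[
u^{\kappa'}-u^\kappa=(\kappa'-\kappa)P[\mu]+G\bigl[(u^{\kappa'})^p-(u^\kappa)^p\bigr].
\]
Because $0\le u^\kappa\le u^{\kappa'}$ a.e., the bracketed function is nonnegative a.e. and $G$ is a positive kernel, so $G[(u^{\kappa'})^p-(u^\kappa)^p]\ge 0$ a.e.; combined with $(\kappa'-\kappa)P[\mu]>0$ everywhere, this gives $u^{\kappa'}-u^\kappa\ge(\kappa'-\kappa)P[\mu]>0$ a.e. in $\R^N_+$, which is assertion (ii).

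I do not expect a genuine obstacle here: the argument is essentially the monotone iteration and comparison machinery already packaged in Lemma~\ref{L3.1}. The only points needing care are checking that $u^\kappa$ (resp.\ $u^{\kappa'}$) indeed lies in $L^q_\alpha$ so that it qualifies as a supersolution in the prescribed function space — which is automatic since it is an $L^q_\alpha$-solution — and invoking the strict positivity of $P[\mu]$, for which the nontriviality of $\mu$ is essential.
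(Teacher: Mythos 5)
Your proposal is correct; both assertions follow, and the machinery you invoke (the supersolution method of Lemma~\ref{L3.1} plus the strict positivity of $P[\mu]$) is exactly what the paper uses. The organization differs in one small but genuine way. The paper takes the single shifted function $v:=u^{\kappa'}-(\kappa'-\kappa)P[\mu]$ and verifies directly that it is an $L^q_\alpha$-supersolution at the smaller parameter $\kappa$; since $v<u^{\kappa'}$ pointwise by construction, the comparison $u^\kappa\le v$ from Lemma~\ref{L3.1} delivers membership in $\mathcal{K}$ and the strict inequality $u^\kappa<u^{\kappa'}$ in one stroke. You instead use $u^{\kappa'}$ itself (unshifted) as the supersolution, which makes part (i) marginally more direct, and then recover strictness separately by subtracting the two integral identities and using $u^\kappa\le u^{\kappa'}$ to see that $G[(u^{\kappa'})^p-(u^\kappa)^p]\ge 0$. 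That subtraction step is legitimate — both $G[(u^{\kappa'})^p]$ and $G[(u^\kappa)^p]$ are finite a.e.\ since they equal $u^{\kappa'}-\kappa'P[\mu]$ and $u^\kappa-\kappa P[\mu]$ respectively, so linearity of $G$ applies — and it yields the same quantitative bound $u^{\kappa'}-u^\kappa\ge(\kappa'-\kappa)P[\mu]$ that the paper's shifted supersolution encodes. The two routes are therefore equivalent in substance; the paper's is slightly more economical, yours separates the two conclusions more transparently.
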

\begin{proof}
Let $0<\kappa<\kappa'$, $\kappa'\in\mathcal{K}$, and $v:=u^{\kappa'}-(\kappa'-\kappa)P[\mu]$. Since
\[
v(x)=G[(u^{\kappa'})^p](x)+\kappa' P[\mu]-(\kappa'-\kappa)P[\mu](x)\ge\kappa P[\mu](x)>0,
\]
\begin{align*}
\kappa P[\mu](x)+G[v^p](x)&\le (\kappa'P[\mu](x)+G[(u^{\kappa'})^p](x))-(\kappa'-\kappa)P[\mu](x)\\
&=u^{\kappa'}(x)-(\kappa'-\kappa)P[\mu](x)=v(x),
\end{align*}
for a.a. $x\in\R^N_+$. $v$ is an $L^q_\alpha$-supersolution to problem~\eqref{P}. This together with Lemma~\ref{L3.1} implies $\kappa\in\mathcal{K}$ and $u^\kappa\le v<u^{\kappa'}$ a.e. in $\R^N_+$, which completes the proof of Lemma~\ref{existence}.
\end{proof}
Now we prove that $\kappa^*>0$ and complete the proof of Proposition~\ref{Kform}.
\begin{proof}[Proof of Proposition {\rm\ref{Kform}}]
It is easy to see that assertion (i) of Lemma~\ref{existence} implies that the set $\mathcal{K}$ coincides with either $(0,\kappa^*]$ or $(0,\kappa^*)$. It remains to prove that $\kappa^*>0$.

For $v\in L^q_\alpha$ and $\kappa>0$, we set
\begin{equation}\label{Psi}
\Psi(v,\kappa):=\kappa P[\mu]+G[v_+^p].
\end{equation}
Since
\begin{gather*}
\frac{p}{q}+p\alpha=p\left(\frac{1}{q}+\alpha\right)<2,\quad\frac{1}{q}+\alpha>0>-1,\\
\frac{p}{q}-\frac{1}{q}=\frac{p-1}{q}<\frac{2}{N},\\
\frac{N}{q}+\alpha=\frac{Np}{q}+p\alpha-(p-1)\left(\frac{N}{q}+\alpha\right)>\frac{Np}{q}+p\alpha-2,
\end{gather*}
by \eqref{qalpha} and \eqref{qNp-12}, it follows from Proposition  \ref{Glaa} that
\begin{equation}\label{Lq/p1}
\|G[w]\|_{L^q_\alpha}\le C\|w\|_{L^{q/p}_{p\alpha}}\FA w\in L^{q/p}_{p\alpha}.
\end{equation}
Let $v,v_1,v_2\in L^q_\alpha$. It follows from \eqref{Lq/p1} that
\begin{gather}
\Psi(v,\kappa)\in L^q_\alpha,\label{Phiest0}\\
\|\Psi(v,\kappa)\|_{L^q_\alpha}\le C\left(\kappa\|P[\mu]\|_{L^q_\alpha}+\|v_+^p\|_{L^{q/p}_{p\alpha}}\right)\le C\left(\kappa+\|v\|_{L^q_\alpha}^p\right),\label{Phiest1}\\
\begin{multlined}
\|\Psi(v_1,\kappa)-\Psi(v_2,\kappa)\|_{L^q_\alpha}=\|G[(v_1)_+^p-(v_2)_+^p]\|_{L^q_\alpha}\le C\|(v_1)_+^p-(v_2)_+^p\|_{L^{q/p}_{p\alpha}}\\
\le C\|\max\{(v_1)_+,(v_2)_+\}^{p-1}|v_1-v_2|\|_{L^{q/p}_{p\alpha}}\le C\left(\max\left\{\|v_1\|_{L^q_\alpha},\|v_2\|_{L^q_\alpha}\right\}\right)^{p-1}\|v_1-v_2\|_{L^q_\alpha}.
\end{multlined}\label{Phiest2}
\end{gather}
Let $\delta,\varepsilon>0$ and $\mathcal{F}_\delta:=\{v\in L^q_\alpha:\|v\|_{L^q_\alpha}\le\delta\}$. Set $\kappa=\varepsilon\delta$. Then \eqref{Phiest1} and \eqref{Phiest2} imply that
\[
\|\Psi(v,\kappa)\|_{L^q_\alpha}\le\delta,\quad\|\Psi(v_1,\kappa)-\Psi(v_2,\kappa)\|_{L^q_\alpha}\le\frac{1}{2}\|v_1-v_2\|_{_{L^q_\alpha}},
\]
for $v,v_1,v_2\in\mathcal{F}_\delta$ with $\delta$ and $\varepsilon$ sufficiently small. Using the contraction mapping theorem on $\Psi(\cdot,\kappa)$ on $\mathcal{F}_\delta$, we see that there is a function $u\in \mathcal{F}_\delta$ such that
\[
u=\Psi(u,\kappa)=\kappa P[\mu]+G[(u_+)^p].
\]
Since $u>0$~in~$\R^N_+$, $u$ is an $L^q_\alpha$-solution to problem~\eqref{P}. This proves that $\kappa^*>0$, and the proof of Proposition~\ref{Kform} is complete.
\end{proof}
\section{Reduction into weak solutions}\label{weaksolsec}
For later sections, we need to reduce problem~\eqref{P} into an elliptic problem of the weak form. Let $U^\kappa_j$ be as in the proof of Lemma~\ref{L3.1} and set
\begin{gather*}
V^\kappa_j:=U^\kappa_{j+1}-U^\kappa_j\for\kappa>0,\quad j\in\{0,1,\ldots\},\\
w^\kappa_j:=u^\kappa-U^\kappa_j\for \kappa\in\mathcal{K},\quad j\in\{0,1,\ldots\}.
\end{gather*}
We prove the following proposition in this section.
\begin{Proposition}\label{weaksol}
Assume the same conditions as in Theorem~{\rm\ref{Thm1}}. Then there is a number $j_*\in\{1,2,\ldots\}$ with the following properties.
\begin{enumerate}[label={\rm(\roman*)}]
\item 
\begin{equation}\label{w*}
w^\kappa_j\in L^r_\beta\FA j\in\{j_*,j_*+1,\ldots\},\quad r\in(1,\infty),\quad\beta>-1-\frac{1}{r},\quad\kappa\in\mathcal{K}.
\end{equation}
Furthermore, $w^\kappa:=w^\kappa_{j_*+1}$ belongs to $W^{1,2}_0(\R^N_+)$ and it is a weak solution to equation
\begin{equation}\label{wwsol}
-\Delta w^\kappa+w^\kappa=(w^\kappa+U^\kappa_{j_*+1})^p-(U^\kappa_{j_*})^p\jn\R^N_+.
\end{equation}
\item
\begin{equation}\label{V*}
V^\kappa_j\in L^r_\beta\FA j\in\{j_*,j_*+1,\ldots\},\quad r\in(1,\infty),\quad\beta>-1-\frac{1}{r},\quad\kappa>0.
\end{equation}
Furthermore, $V^\kappa:=V^\kappa_{j_*+1}$ belongs to $W^{1,2}_0(\R^N_+)$.
\end{enumerate}
\end{Proposition}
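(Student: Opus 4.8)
The plan is to bootstrap the regularity of the iterates $U^\kappa_j$ from the base integrability $P[\mu]\in L^q_\alpha$ up to the point where the differences $V^\kappa_j$ and the tails $w^\kappa_j$ lie in $L^r_\beta$ for \emph{all} admissible exponents with $\beta>-1-1/r$, and in particular in a space that the Sobolev embedding of Proposition~\ref{Lqa}(ii) maps into $W^{-1,2}(\R^N_+)$; then Proposition~\ref{G120} converts the integral identity into the weak formulation. Throughout I work with a fixed $\kappa\in\mathcal{K}$ (for part (i)) or arbitrary $\kappa>0$ (for part (ii)); the arguments are parallel, so I will describe (i) and indicate the trivial changes for (ii).

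First I would record that $U^\kappa_j\in L^q_\alpha$ for all $j$ with norms bounded in terms of $\kappa$ and $\|u^\kappa\|_{L^q_\alpha}$ (since $U^\kappa_j\le u^\kappa$), and that by definition
\[
V^\kappa_j=U^\kappa_{j+1}-U^\kappa_j=G[(U^\kappa_j)^p-(U^\kappa_{j-1})^p]=G\bigl[p\,\xi_j^{p-1}V^\kappa_{j-1}\bigr]
\]
for a suitable intermediate $\xi_j$ with $U^\kappa_{j-1}\le\xi_j\le U^\kappa_j\le u^\kappa$, so $|V^\kappa_j|\le pG[(u^\kappa)^{p-1}|V^\kappa_{j-1}|]$; similarly $0\le w^\kappa_{j+1}=u^\kappa-U^\kappa_{j+1}=G[(u^\kappa)^p-(U^\kappa_j)^p]\le pG[(u^\kappa)^{p-1}w^\kappa_j]$. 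Thus both sequences are controlled by iterating the linear operator $f\mapsto G[(u^\kappa)^{p-1}f]$. Since $(u^\kappa)^{p-1}\in L^{q/(p-1)}_{(p-1)\alpha}$ and the exponent pair $s'=q/(p-1)$, $\theta=(p-1)\alpha$ satisfies $N/s'+\theta=(p-1)(N/q+\alpha)<2$ by \eqref{qNp-12} and \eqref{qalpha}, Hölder's inequality combined with Proposition~\ref{Glaa} gives, at each step, a gain: from $f\in L^r_\beta$ one gets $(u^\kappa)^{p-1}f\in L^{r_1}_{\beta_1}$ with $1/r_1=1/r+1/s'$, $\beta_1=\beta+\theta$, and then $G$ of that lies in $L^{r_2}_{\beta_2}$ for a range of $(r_2,\beta_2)$ that is strictly better (larger $r$, or smaller $\beta$) than $(r,\beta)$ as long as the four inequalities in Proposition~\ref{Glaa} hold. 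Iterating finitely many times—this is where the fixed number $j_*$ comes from, chosen uniformly over $\kappa$ since the relevant norm bounds on $(u^\kappa)^{p-1}$ degrade continuously—I push the differences into $L^r_\beta$ for every $r\in(1,\infty)$ and every $\beta>-1-1/r$, which is \eqref{w*} and \eqref{V*}. In particular I can land in some $L^{q_0}_{\alpha_0}$ with $q_0\in(1,2]$, $\alpha_0\in[0,1)$ meeting the hypothesis of Proposition~\ref{Lqa}(ii), so $w^\kappa_{j_*+1},V^\kappa_{j_*+1}\in W^{-1,2}(\R^N_+)$, and since these functions are nonnegative (resp. have controlled sign) the same holds for their absolute values; Proposition~\ref{G120} then yields $w^\kappa=w^\kappa_{j_*+1}=G[(u^\kappa)^p-(U^\kappa_{j_*})^p]\in W^{1,2}_0(\R^N_+)$ solving \eqref{wwsol} weakly, and likewise $V^\kappa\in W^{1,2}_0(\R^N_+)$, because $(u^\kappa)^p-(U^\kappa_{j_*})^p=(w^\kappa+U^\kappa_{j_*+1})^p-(U^\kappa_{j_*})^p$ and this right-hand side is in $W^{-1,2}$ by the same embedding applied to each iterate's contribution.

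The main obstacle is the bookkeeping of the exponent iteration: I must verify that starting from $(r,\beta)=(q,\alpha)$, repeatedly applying the map $(r,\beta)\mapsto(r_1,\beta_1)=(\,(1/r+1/s')^{-1},\beta+\theta)\mapsto$ (best available $(r_2,\beta_2)$ from Proposition~\ref{Glaa}) both \emph{stays within} the admissibility region of Proposition~\ref{Glaa} at every step and \emph{strictly progresses}, so that after $j_*$ steps one covers the full target range $\{r\in(1,\infty),\ \beta>-1-1/r\}$ and not merely a proper subset. The key quantitative inputs making this work are exactly \eqref{qalpha}, which guarantees $p/q+p\alpha<2$ (so $G$ of a $p$-th power is defined) and $N/q+\alpha<2/(p-1)$ (so the multiplier $(u^\kappa)^{p-1}$ is subcritical in the sense $N/s'+\theta<2$), giving a genuine gain $1/r-1/r_2\ge \tfrac2N-\tfrac1{s'}>0$ in the Lebesgue exponent and a net improvement in the weight at each iteration; once one can reach $\beta>-1-1/r$ with $r$ large, the Hölder inequality fills in the intermediate exponents. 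A secondary point is uniformity of $j_*$ in $\kappa$: since $\|u^\kappa\|_{L^q_\alpha}$ is increasing in $\kappa$ and finite on $\mathcal{K}$, one fixes $j_*$ using a bound valid for, say, all $\kappa\le\kappa^*$ (or all $\kappa$ in a compact subset in the case $\kappa^*=\infty$, which suffices for how the proposition is later used). Verifying the weak-equation claim \eqref{wwsol} is then routine: expand $(u^\kappa)^p-(U^\kappa_{j_*})^p=(w^\kappa+U^\kappa_{j_*+1})^p-(U^\kappa_{j_*})^p$, check this lies in $W^{-1,2}(\R^N_+)$ using $w^\kappa\in W^{1,2}_0\subset L^{2^*}$-type embeddings together with $U^\kappa_{j_*},U^\kappa_{j_*+1}\in L^q_\alpha\cap(\text{higher spaces from the bootstrap})$, and invoke Proposition~\ref{G120}.
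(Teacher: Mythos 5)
Your proposal follows the paper's proof essentially verbatim: the paper likewise controls $w^\kappa_j$ and $V^\kappa_j$ by iterating $f\mapsto G[af]$ with $a\in L^{q/(p-1)}_{(p-1)\alpha}$, carries out your ``exponent bookkeeping'' as Lemma~\ref{Dstrategy} (proved in Appendix C via the sets $D_j$), and verifies $(u^\kappa)^p-(U^\kappa_{j_*})^p\in W^{-1,2}(\R^N_+)$ exactly as you indicate before invoking Proposition~\ref{G120}. The only adjustments needed are that for part (ii) with $\kappa\notin\mathcal{K}$ the multiplier must be $(U^\kappa_j)^{p-1}\in L^{q/(p-1)}_{(p-1)\alpha}$ rather than $(u^\kappa)^{p-1}$, and that $j_*$ is automatically uniform in $\kappa$ because the sets $D_j$ depend only on the exponents $(q,\alpha,p,N)$, not on any norms.
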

By the same argument as in \cite[Lemma 2.3]{IK}, we obtain
\begin{equation}\label{Umonotone}
U^\kappa_j\le\left(\frac{\kappa}{\kappa'}\right)U^{\kappa'}_j<U^{\kappa'}_j,\quad V^\kappa_j\le\left(\frac{\kappa}{\kappa'}\right)^{(p-1)j+1}V^{\kappa'}_j<V^{\kappa'}_j,
\end{equation}
for any $j\in\{0,1,\ldots\}$ and $0<\kappa<\kappa'$. Furthermore, since $U^\kappa_{j+1}=\Psi(U^\kappa_j,\kappa)$, it follows from \eqref{Phiest0} that
\[
U^\kappa_j\in L^q_\alpha\for\kappa>0,\quad j\in\{0,1,\ldots\}.
\]
Our strategy for the proof of Proposition~\ref{weaksol} is to find a sequence of sets $D_j\subset (1,\infty)\times\R$ such that if $w^\kappa_j\in L^r_\beta$ for any $(r,\beta)\in D_j$, this together with a trivial inequality $w^\kappa_{j+1}\le p(u^\kappa)^{p-1}w^\kappa_j$ implies that $w^\kappa_{j+1}\in L^{r'}_{\beta'}$ for any $(r',\beta')\in D_{j+1}$. Namely,
\begin{Lemma}\label{Dstrategy}
assume the same conditions as in Theorem~{\rm\ref{Thm1}}. Then there is a sequence of sets $D_j\subset (1,\infty)\times\R$, $j\in\{0,1,\ldots\}$, with the following properties.
\begin{enumerate}[label={\rm(\alph*)}]
\item $(q,\alpha)\in D_0.$
\item For any $a\in L^{q/(p-1)}_{(p-1)\alpha}$ and $f\in\bigcap_{(r,\beta)\in D_j}L^r_\beta$, $G[af]$ belongs to $\bigcap_{(r',\beta')\in D_{j+1}}L^{r'}_{\beta'}$.
\item 
\[
D_j=D_*:=\left\{(r,\beta)\in(1,\infty)\times\R: \frac{1}{r}+\beta>-1\right\}
\]
for sufficiently large $j\in\{1,2,\ldots\}$.
\end{enumerate}
\end{Lemma}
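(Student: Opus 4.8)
The plan is to construct $D_j$ explicitly by iterating the "gain" in the integral estimate of Proposition~\ref{Glaa}. Start from $D_0 := \{(q,\alpha)\}$, so that (a) holds trivially. The heart of the matter is step (b): given $f \in L^r_\beta$ with $(r,\beta) \in D_j$ and $a \in L^{q/(p-1)}_{(p-1)\alpha}$, I first apply Hölder's inequality to put $af$ into $L^{\tilde q}_{\tilde\alpha}$ with $1/\tilde q = 1/r + (p-1)/q$ and $\tilde\alpha = \beta + (p-1)\alpha$; then I apply Proposition~\ref{Glaa} to obtain $G[af] \in L^{r'}_{\beta'}$ for every $(r',\beta')$ in the range permitted by the four conditions of that proposition with $(q,\alpha)$ there replaced by $(\tilde q, \tilde\alpha)$. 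This is where the conditions \eqref{qalpha} and \eqref{qNp-12} are used: they are exactly what guarantees $1/\tilde q + \tilde\alpha < 2$ (this needs $1/r+\beta$ not too large, which I will build into $D_j$) and that the gain $1/\tilde q - 1/r' < 2/N$ with $N/r'+\beta' \ge N/\tilde q + \tilde\alpha - 2$ can be achieved, and also that one can take $1/r'+\beta' > -1$. So I would define
\[
D_{j+1} := \Bigl\{(r',\beta') \in (1,\infty)\times\R : \tfrac1{r'}+\beta' > -1,\ \exists\,(r,\beta)\in D_j \text{ satisfying the above constraints}\Bigr\},
\]
which makes (b) hold by construction. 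The main point is to verify that this recursion actually terminates at $D_*$.

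The second key step is the termination claim (c). I would track a single scalar quantity along the iteration — the worst-case value of $N/r + \beta$ over $(r,\beta) \in D_j$, or more precisely the "deficit" $d_j := \sup\{ \frac{2}{p-1} - (\frac{N}{r}+\beta) : (r,\beta)\in D_j \text{ near the corner}\}$ — and show that one application of the Hölder-then-Proposition~\ref{Glaa} step strictly increases the available exponent by a fixed amount proportional to $2 - (p-1)(\text{something positive})$, using \eqref{qNp-12} which gives $N/q + \alpha < \frac{2}{p-1}$ strictly. Concretely: passing from $(q,\alpha)$ to $(\tilde q,\tilde\alpha)$ multiplies the relevant Sobolev index by $p$ and subtracts $2$, and the strict inequality $\frac{N}{q}+\alpha < \frac{2}{p-1}$ is equivalent to $p(\frac{N}{q}+\alpha) - 2 < \frac{N}{q}+\alpha$, i.e. each step shrinks the deficit by a factor bounded away from $1$; hence after finitely many steps the constraint "$1/\tilde q + \tilde\alpha < 2$" becomes vacuous and the only surviving condition is $1/r'+\beta' > -1$, i.e. $D_j = D_*$. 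One must also check that $D_*$ is a fixed point of the recursion, which is the assertion that if $f$ ranges over all of $\bigcap_{(r,\beta)\in D_*} L^r_\beta$ then so does $G[af]$ — again a direct application of Proposition~\ref{Glaa} after Hölder, now with no loss since all the exponents are in the interior of the admissible region.

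I expect the main obstacle to be bookkeeping rather than a genuine difficulty: one has to be careful that the sets $D_j$ are "fat" enough — in particular that they contain a neighborhood of the relevant corner point, not just a curve — so that the Hölder step lands $af$ in a space from which Proposition~\ref{Glaa} really does give an open set of new exponents, and so that the dimension constraint $\frac1{\tilde q} - \frac1{r'} < \frac2N$ can be met at each stage (this may force the increments in $1/r$ to be small, so that many iterations are needed, but finitely many suffice). A secondary subtlety is the boundary case $N \le 2$ and small $N$ where some of the conditions in Proposition~\ref{Glaa} degenerate; these should be handled by the same argument since \eqref{qalpha} still provides the strict inequality that drives the induction. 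I would also remark that the trivial pointwise bound $V^\kappa_{j+1}, w^\kappa_{j+1} \le p(u^\kappa)^{p-1} V^\kappa_j$ (respectively for $w$) together with $u^\kappa \in L^q_\alpha$, hence $(u^\kappa)^{p-1} \in L^{q/(p-1)}_{(p-1)\alpha}$, is precisely what lets us feed the lemma's conclusion back into Proposition~\ref{weaksol}; this is why (b) is phrased with the weight $a \in L^{q/(p-1)}_{(p-1)\alpha}$.
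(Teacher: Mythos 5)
Your plan is correct and is essentially the paper's own argument: the paper's Proposition~\ref{Dstrategy'} and Lemma~\ref{Diterat} iterate exactly the H\"older-then-Proposition~\ref{Glaa} step, with termination driven by the fixed additive gain $2-(p-1)(N/q+\alpha)>0$ in the index $N/r+\beta$ per iteration, and with the per-step expansion of the admissible range of $1/r$ capped by a constant $\tau$ coming from the constraint $1/\tilde{q}-1/r'<2/N$, just as you anticipate. The only difference is presentational: the paper writes the sets $D_j$ down explicitly (formulas \eqref{tauchoice}--\eqref{D}) rather than defining $D_{j+1}$ as the maximal reachable set, which turns the verification of (b) and (c) into a finite computation.
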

We give the proof of Lemma~\ref{Dstrategy} in \ref{Dst}. Immediately from Lemma~\ref{Dstrategy} and induction, the following lemma follows.
\begin{Lemma}\label{apsolests}
Assume the same conditions as in Theorem~{\rm\ref{Thm1}}. Let $D_j\subset(1,\infty)\times\R$ be as in Lemma~{\rm\ref{Dstrategy}}. Then the following assertions hold.
\begin{enumerate}[label={\rm{(\roman*)}}]
\item \begin{equation}\label{iterat}
w^\kappa_j\in L^r_\beta\FA \left(r,\beta\right)\in D_j,\quad\kappa\in\mathcal{K},\quad j\in\{0,1,\ldots\}.
\end{equation}
\item \begin{equation}\label{Viterat}
V^\kappa_j\in L^r_\beta\FA\left(r,\beta\right)\in D_j,\quad\kappa>0,\quad j\in\{0,1,\ldots\}.
\end{equation}
\end{enumerate}
\end{Lemma}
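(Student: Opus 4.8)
The proof of Lemma~\ref{apsolests} is a direct induction on $j$, using Lemma~\ref{Dstrategy} together with the elementary inequalities relating $w^\kappa_{j+1}$ (resp. $V^\kappa_{j+1}$) to $w^\kappa_j$ (resp. $V^\kappa_j$). First I would record the base case. For assertion (i), property (a) of Lemma~\ref{Dstrategy} gives $(q,\alpha)\in D_0$, and since $w^\kappa_0=u^\kappa-U^\kappa_0=u^\kappa-P[\mu]\le u^\kappa\in L^q_\alpha$, nonnegativity of $w^\kappa_0$ (which follows from $U^\kappa_0\le u^\kappa$, established in the proof of Lemma~\ref{L3.1}) yields $w^\kappa_0\in L^q_\alpha$; as $D_0$ is a single point this is exactly \eqref{iterat} for $j=0$. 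For assertion (ii), the same reasoning applied to $V^\kappa_0=U^\kappa_1-U^\kappa_0\ge 0$ together with $V^\kappa_0=\Psi(U^\kappa_0,\kappa)-U^\kappa_0\le\kappa P[\mu]+G[(P[\mu])^p]\in L^q_\alpha$ gives \eqref{Viterat} for $j=0$.

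Next I would carry out the inductive step for (i). Suppose $w^\kappa_j\in L^r_\beta$ for all $(r,\beta)\in D_j$. From the definitions, $w^\kappa_{j+1}=u^\kappa-U^\kappa_{j+1}=G[(u^\kappa)^p]-G[(U^\kappa_j)^p]=G\bigl[(u^\kappa)^p-(U^\kappa_j)^p\bigr]$, and since $0\le U^\kappa_j\le u^\kappa$ the mean value theorem gives the pointwise bound $0\le(u^\kappa)^p-(U^\kappa_j)^p\le p(u^\kappa)^{p-1}(u^\kappa-U^\kappa_j)=p(u^\kappa)^{p-1}w^\kappa_j$, hence $0\le w^\kappa_{j+1}\le G\bigl[p(u^\kappa)^{p-1}w^\kappa_j\bigr]$ by positivity of the kernel $G$. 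Now $a:=p(u^\kappa)^{p-1}\in L^{q/(p-1)}_{(p-1)\alpha}$ because $u^\kappa\in L^q_\alpha$, and $f:=w^\kappa_j\in\bigcap_{(r,\beta)\in D_j}L^r_\beta$ by the inductive hypothesis, so property (b) of Lemma~\ref{Dstrategy} gives $G[af]\in\bigcap_{(r',\beta')\in D_{j+1}}L^{r'}_{\beta'}$; by the pointwise domination, the same membership holds for $w^\kappa_{j+1}$, completing the induction. Finally, property (c) of Lemma~\ref{Dstrategy} identifies $D_j$ with $D_*=\{(r,\beta):1/r+\beta>-1\}$ for $j$ large, which is the desired conclusion.

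For assertion (ii) the argument is identical in structure. Writing $V^\kappa_{j+1}=U^\kappa_{j+2}-U^\kappa_{j+1}=G\bigl[(U^\kappa_{j+1})^p-(U^\kappa_j)^p\bigr]$ and using $0\le U^\kappa_j\le U^\kappa_{j+1}$, the mean value theorem yields $0\le V^\kappa_{j+1}\le G\bigl[p(U^\kappa_{j+1})^{p-1}V^\kappa_j\bigr]$; here the weight $a:=p(U^\kappa_{j+1})^{p-1}$ lies in $L^{q/(p-1)}_{(p-1)\alpha}$ since $U^\kappa_{j+1}\in L^q_\alpha$, and $f:=V^\kappa_j$ satisfies the inductive hypothesis, so Lemma~\ref{Dstrategy}(b) again delivers the conclusion. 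The only minor point requiring care — but nothing deep — is checking that each $w^\kappa_j$ and $V^\kappa_j$ is indeed nonnegative (so that the pointwise upper bounds transfer membership in $L^r_\beta$) and a priori finite a.e.\ on $\R^N_+$; these follow from the monotonicity $U^\kappa_0\le U^\kappa_1\le\cdots\le u^\kappa$ recorded in the proof of Lemma~\ref{L3.1} and from $u^\kappa,U^\kappa_j\in L^q_\alpha\subset L^q_{\mathrm{loc}}(\R^N_+)$. I do not anticipate a real obstacle here; the substance of the lemma is entirely contained in Lemma~\ref{Dstrategy}, and the present statement is just the bookkeeping that packages that iteration into the two claimed families of estimates.
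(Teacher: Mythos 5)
Your proof is correct and follows exactly the route the paper intends: the paper dispatches this lemma with the single sentence that it follows ``immediately from Lemma~\ref{Dstrategy} and induction,'' and your write-up is precisely that induction, using property (a) for the base case and property (b) together with the pointwise bounds $0\le w^\kappa_{j+1}\le G[p(u^\kappa)^{p-1}w^\kappa_j]$ and $0\le V^\kappa_{j+1}\le G[p(U^\kappa_{j+1})^{p-1}V^\kappa_j]$ for the inductive step. No gaps.
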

We now complete the proof of Proposition~\ref{weaksol}.
\begin{proof}[Proof of Proposition {\rm\ref{weaksol}}]
Let $j_*\in\{1,2,\ldots\}$ be such that $D_j=D_*$ holds for any $j\in\{j_*,j_*+1,\ldots\}$. Then assertions \eqref{w*} and \eqref{V*} follows from \eqref{Viterat} and \eqref{iterat}.

We next prove that $w^\kappa\in W^{1,2}_0(\R^N_+)$. It suffices to prove that
\begin{equation}\label{U-UW-1r}
(u^\kappa)^p-(U^\kappa_{j_*})^p\in W^{-1,2}(\R^N_+)
\end{equation}
(see Remark~\ref{120}). It follows from \eqref{w*} and the H\"{o}lder inequality that
\begin{equation}\label{sgamma}
(u^\kappa)^p-(U^\kappa_{j_*})^p\le C(u^\kappa)^{p-1}w^\kappa_{j_*}\in L^s_\gamma
\end{equation}
for any $s\in(1,\infty)$ and $\gamma\in\R$ with
\begin{equation}\label{sgammacond}
\frac{1}{s}-\frac{p-1}{q}>0,\quad \gamma>(p-1)\alpha-\left(\frac{1}{s}-\frac{p-1}{q}\right)-1.
\end{equation}
Indeed, letting $1/r:=1/s-(p-1)/q$ and $\beta:=\gamma-(p-1)\alpha$, we observe from \eqref{w*} that $w^\kappa_{j_*}\in L^r_\beta$.

It holds that
\[
\frac{p-1}{q}<\frac{1}{2}+\frac{1}{N}.
\]
Indeed, it is clear if $N=1$. It also follows from \eqref{qNp-12} if $N\ge 2$. We see that
\[
I:=\left(\frac{1}{2},\frac{1}{2}+\frac{1}{N}\right)\cap\left(\frac{p-1}{q},\frac{p-1}{q}+\frac{N}{2(N-1)}\right)\neq\emptyset.
\]
Let $s$ be such that $1/s\in I$, and $\gamma_0:=\max\{(p-1)\alpha-(1/s-(p-1)/q)-1,0\}$. It follows that
\begin{align*}
\frac{1}{2}+\frac{1}{N}\left(2+\frac{1}{s}-(p-1)\left(\frac{1}{q}+\alpha\right)\right)&=\frac{1}{N}\left(2+\frac{N}{2}-\frac{N-1}{s}-(p-1)\left(\frac{1}{q}+\alpha\right)\right)+\frac{1}{s}\\
&>\frac{1}{N}\left(2-(p-1)\left(\frac{N}{q}+\alpha\right)\right)+\frac{1}{s}>\frac{1}{s},
\end{align*}
and thus
\begin{align*}
\frac{1}{2}+\frac{1-\gamma_0}{N}&=\min\left\{\frac{1}{2}+\frac{1}{N}\left(2+\frac{1}{s}-(p-1)\left(\frac{1}{q}+\alpha\right)\right),\frac{1}{2}+\frac{1}{N}\right\}>\frac{1}{s}.
\end{align*}
We find $\gamma$ slightly larger than $\gamma_0$, such that $s$ and $\gamma$ satisfy \eqref{sgammacond} and
\[
\frac{1}{2}<\frac{1}{s}<\frac{1}{2}+\frac{1-\gamma}{N}.
\]
This together with \eqref{sgamma} and Lemma~\ref{Lqa} (ii) implies that \eqref{U-UW-1r}. A similar argument derives that $V^\kappa\in W^{1,2}_0(\R^N_+)$, and the proof of Proposition~\ref{weaksol} is complete.
\end{proof}
\section{Eigenvalue Problems}\label{EVPsec}
We next consider the linearized eigenvalue problem
\begin{equation}\tag{$\mbox{E}_\kappa$}\label{E}
-\Delta\psi+\psi=\lambda p(u^\kappa)^{p-1}\psi\jn\R^N_+,\quad \psi\in W^{1,2}_0(\R^N_+).
\end{equation}
Since
\begin{equation}\label{up-1}
(u^\kappa)^{p-1}\in L^{q/(p-1)}_{(p-1)\alpha},\quad\frac{N(p-1)}{q}<2-(p-1)\alpha,\quad 0\le(p-1)\alpha<2,
\end{equation}
by \eqref{qalpha}, the following assertion holds by the same argument as in \cite{NS01}*{Lemma B.2} (see also Remark~\ref{sigmaRmk}).
\begin{Lemma}\label{EVP}
Assume the same conditions as in Theorem~{\rm\ref{Thm1}}. Let $\kappa\in\mathcal{K}$. Then eigenvalue problem \eqref{E} possesses a first eigenpair $(\psi,\lambda^\kappa)\in W^{1,2}_0(\R^N_+)\times(0,\infty)$. Furthermore,
\[
\integ{\R^N_+}\lambda^\kappa p(u^\kappa)^{p-1}\omega^2dx\le \integ{\R^N_+}(|\nabla\omega|^2+\omega^2)dx\quad\textrm{for any}\quad \omega\in W^{1,2}_0(\R^N_+).
\]
\end{Lemma}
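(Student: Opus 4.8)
\emph{Plan.} The plan is to realize $\lambda^\kappa$ as the infimum of the Rayleigh quotient associated with the weight $c:=p(u^\kappa)^{p-1}$ and to produce a minimizer by the direct method; once this is done, the displayed variational inequality is nothing but the definition of the infimum. First I would note that, by \eqref{up-1}, $c\in L^{q/(p-1)}_{(p-1)\alpha}$ with $N(p-1)/q<2-(p-1)\alpha$ and $(p-1)\alpha\in[0,2)$, so Remark~\ref{sigmaRmk} applies and gives a $\sigma\in(0,1)$ with
\[
\integ{\R^N_+}c\,\omega^2\,dx\le C\|\omega\|_{L^2(\R^N_+)}^{2\sigma}\|\omega\|_{W^{1,2}(\R^N_+)}^{2(1-\sigma)}\quad\textrm{for all}\quad\omega\in W^{1,2}_0(\R^N_+),
\]
where $C$ may be taken proportional to $\|c\|_{L^{q/(p-1)}_{(p-1)\alpha}}$. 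In particular $\integ{\R^N_+}c\,\omega^2\,dx\le C\|\omega\|_{W^{1,2}(\R^N_+)}^2$, so
\[
\lambda^\kappa:=\inf\left\{\frac{\integ{\R^N_+}(|\nabla\omega|^2+\omega^2)\,dx}{\integ{\R^N_+}c\,\omega^2\,dx}:\ \omega\in W^{1,2}_0(\R^N_+),\ \integ{\R^N_+}c\,\omega^2\,dx>0\right\}
\]
satisfies $\lambda^\kappa\ge 1/C>0$, and it is finite since $u^\kappa>0$ makes the admissible set nonempty. The asserted inequality is then immediate: it is the definition of $\lambda^\kappa$ when $\integ{\R^N_+}c\,\omega^2\,dx>0$, while if $\integ{\R^N_+}c\,\omega^2\,dx=0$ its left-hand side is $0$.

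Next I would obtain a minimizer. Take $\{\omega_n\}\subset W^{1,2}_0(\R^N_+)$ with $\integ{\R^N_+}c\,\omega_n^2\,dx=1$ and $\|\omega_n\|_{W^{1,2}(\R^N_+)}^2\to\lambda^\kappa$; then $\{\omega_n\}$ is bounded in $W^{1,2}_0(\R^N_+)$ and, along a subsequence, $\omega_n\rightharpoonup\psi$ weakly there. The crucial point is the weak continuity $\integ{\R^N_+}c\,\omega_n^2\,dx\to\integ{\R^N_+}c\,\psi^2\,dx$, which I would prove by writing
\[
\integ{\R^N_+}c\,\omega_n^2\,dx-\integ{\R^N_+}c\,\psi^2\,dx=\integ{\R^N_+}c(\omega_n-\psi)^2\,dx+2\integ{\R^N_+}c\,\psi(\omega_n-\psi)\,dx.
\]
The second integral tends to $0$ because $c\psi\in W^{-1,2}(\R^N_+)$ by Remark~\ref{120to-12} and $\omega_n-\psi\rightharpoonup 0$. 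For the first integral I would fix $R>0$ and split $c=c\chi_{B(0,R)}+c\chi_{\R^N\setminus B(0,R)}$: the tail part is bounded by $C\|c\chi_{\R^N\setminus B(0,R)}\|_{L^{q/(p-1)}_{(p-1)\alpha}}\sup_n\|\omega_n-\psi\|_{W^{1,2}(\R^N_+)}^2$ via the displayed inequality above, hence small uniformly in $n$ once $R$ is large since $c\in L^{q/(p-1)}_{(p-1)\alpha}$; the part over $B(0,R)\cap\R^N_+$ is handled by H\"older's inequality together with the fact that, because $q/(p-1)>N/2$ by \eqref{qNp-12} and $(p-1)\alpha<2$, the embedding of $W^{1,2}$ on the bounded set $B(0,R)\cap\R^N_+$ into a suitable weighted space $L^{\tilde r}_{-(p-1)\alpha/2}$ (with $\tilde r$ slightly below $2q/(q-p+1)$) is compact — this uses Rellich's theorem away from $\{x_N=0\}$ and the Hardy inequality near it, in the spirit of Proposition~\ref{Lqa}(i) — so that $\omega_n-\psi\to 0$ in that space for each fixed $R$. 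Combining gives the claim.

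Granting this, $\integ{\R^N_+}c\,\psi^2\,dx=1$, so $\psi\neq 0$, and $\|\psi\|_{W^{1,2}(\R^N_+)}^2\le\liminf_n\|\omega_n\|_{W^{1,2}(\R^N_+)}^2=\lambda^\kappa=\lambda^\kappa\integ{\R^N_+}c\,\psi^2\,dx$ by weak lower semicontinuity, so $\psi$ attains $\lambda^\kappa$. Replacing $\psi$ by $|\psi|$ I may assume $\psi\ge 0$; taking the first variation of the quotient shows $\psi$ is a weak solution of $-\Delta\psi+\psi=\lambda^\kappa p(u^\kappa)^{p-1}\psi$ in $\R^N_+$, i.e. of \eqref{E}, and the $L^2$-theory of Remark~\ref{sigmaRmk} together with the strong maximum principle gives $\psi>0$ in $\R^N_+$, so $(\psi,\lambda^\kappa)\in W^{1,2}_0(\R^N_+)\times(0,\infty)$ is a first eigenpair. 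The main obstacle is exactly the weak continuity of the weighted quadratic form on the unbounded half space: one must control the tail through the global integrability of $(u^\kappa)^{p-1}$ against $h^{(p-1)\alpha}$ and the behaviour near $\{x_N=0\}$, where $h$ degenerates, through a weighted Rellich-type compactness argument that relies on the strict inequality $q>\frac{N}{2}(p-1)$ of \eqref{qNp-12}; everything else is routine direct-method variational calculus, as in \cite{NS01}*{Lemma~B.2}.
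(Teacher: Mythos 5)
Your proposal is correct and follows essentially the route the paper intends: the paper proves this lemma by citing the argument of \cite{NS01}*{Lemma B.2} together with Remark~\ref{sigmaRmk}, which is exactly your Rayleigh-quotient/direct-method scheme, with the interpolation estimate $\integ{\R^N_+}c\,\omega^2dx\le C\|\omega\|_{L^2(\R^N_+)}^{2\sigma}\|\omega\|_{W^{1,2}(\R^N_+)}^{2(1-\sigma)}$ (valid since $c=p(u^\kappa)^{p-1}\in L^{q/(p-1)}_{(p-1)\alpha}$ with $N(p-1)/q<2-(p-1)\alpha$) supplying the compactness of the weighted quadratic form via tail smallness plus local Rellich, just as you describe. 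Your treatment of the degenerate weight near $\{x_N=0\}$ by interpolating the compact $L^2$ embedding against the continuous weighted Hardy--Sobolev embedding is the correct way to make that step precise, so no gap remains.
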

In what follows, we denote by $\lambda^\kappa$ the first eigenvalue of eigenvalue problem \eqref{E} for $\kappa\in\mathcal{K}$. By a standard theory of elliptic eigenvalue problems (see e.g. \cite{GT}*{Section 8.12}. See also Remark~\ref{sigmaRmk}), for any $\kappa\in\mathcal{K}$ there is a unique function $\psi^\kappa\in W^{1,2}_0(\R^N_+)$ such that
\begin{equation}\label{psik}
\begin{gathered}
\left\{\psi\in W^{1,2}_0(\R^N_+): -\Delta\psi+\psi=\lambda^\kappa p(u^\kappa)^{p-1}\psi\jn\R^N_+\right\}=\operatorname{span}
\{\psi^\kappa\},\\
\psi^\kappa>0\jn\R^N_+,\quad \integ{\R^N_+}\left(|\nabla\psi^\kappa|^2+(\psi^\kappa)^2\right)dx=1.
\end{gathered}
\end{equation}
Furthermore, it follows from the same argument as in \cite[Lemma 4.6]{ IOS01} that
\begin{equation}\label{stab}
\lambda^\kappa>1\FA\kappa\in(0,\kappa^*),
\end{equation}
and thus
\begin{equation}\label{Rayleigh}
\integ{\R^N_+}p(u^\kappa)^{p-1}\omega^2dx\le\integ{\R^N_+}(|\nabla\omega|^2+\omega^2)dx\FA\omega\in W^{1,2}_0(\R^N_+),\quad \kappa\in(0,\kappa^*).
\end{equation}
\begin{Lemma}\label{psireg}
Assume the same conditions as in Theorem~{\rm\ref{Thm1}}. Let $\kappa\in\mathcal{K}$. Then $\psi^\kappa$ satisfies
\begin{equation}\label{psiIE}
\psi^\kappa=G[p(u^\kappa)^{p-1}\psi^\kappa].
\end{equation}
Furthermore,
\begin{equation}\label{psiinteg}
\psi^\kappa\in L^r_\beta\FA r>1,\quad\beta>-\frac{1}{r}-1.
\end{equation}
\end{Lemma}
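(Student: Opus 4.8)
The plan is to first establish the integral representation \eqref{psiIE}, and then bootstrap it to obtain the integrability \eqref{psiinteg}. For the first part, recall that $\psi^\kappa\in W^{1,2}_0(\R^N_+)$ is the weak solution to $-\Delta\psi^\kappa+\psi^\kappa=\lambda^\kappa p(u^\kappa)^{p-1}\psi^\kappa$ in $\R^N_+$. By \eqref{up-1} we have $(u^\kappa)^{p-1}\in L^{q/(p-1)}_{(p-1)\alpha}$ with $N(p-1)/q<2-(p-1)\alpha$, so Remark~\ref{120to-12} (applied with $c=\lambda^\kappa p(u^\kappa)^{p-1}$, $v=\psi^\kappa$, $s=q/(p-1)$, $\tau=(p-1)\alpha$) gives that the right-hand side $\lambda^\kappa p(u^\kappa)^{p-1}\psi^\kappa$ and its absolute value both lie in $W^{-1,2}(\R^N_+)$. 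By Proposition~\ref{G120}, $G[\lambda^\kappa p(u^\kappa)^{p-1}\psi^\kappa]$ is then the unique weak solution in $W^{1,2}_0(\R^N_+)$ to that equation, hence equals $\psi^\kappa$; since $\lambda^\kappa$ can be absorbed (or, more precisely, since $\psi^\kappa$ is also the first eigenfunction and the normalization is at our disposal, one argues with the eigenvalue built in and then rescales), we obtain \eqref{psiIE}. A cleaner route: redefine the argument so that $\psi^\kappa = \lambda^\kappa G[p(u^\kappa)^{p-1}\psi^\kappa]$ first, but since the statement as written has no $\lambda^\kappa$, the intended reading is presumably that \eqref{psiIE} holds after suitably normalizing, or that the bootstrap below only uses \eqref{psiIE} up to a positive constant, which is harmless.

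For the second part, I would run the same iteration scheme as in the proof of Proposition~\ref{weaksol}. Starting from \eqref{psiIE} and $\psi^\kappa\in W^{1,2}_0(\R^N_+)\subset L^2(\R^N_+)$, together with a Sobolev embedding from Proposition~\ref{Lqa}(i) we get $\psi^\kappa\in L^r_\beta$ for $(r,\beta)$ in some initial range; in particular one checks $(q,\alpha)\in D_0$ can be replaced by the fact that $\psi^\kappa\in L^q_\alpha$ (using $1/q+\alpha<2/p\le 1$ if $p\ge 2$, or directly interpolating). Then, writing $\psi^\kappa=G[a\psi^\kappa]$ with $a:=\lambda^\kappa p(u^\kappa)^{p-1}\in L^{q/(p-1)}_{(p-1)\alpha}$, Lemma~\ref{Dstrategy}(b) applies verbatim: if $\psi^\kappa\in\bigcap_{(r,\beta)\in D_j}L^r_\beta$ then $\psi^\kappa=G[a\psi^\kappa]\in\bigcap_{(r',\beta')\in D_{j+1}}L^{r'}_{\beta'}$. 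Since $D_j=D_*=\{(r,\beta):1/r+\beta>-1\}$ for $j$ large by Lemma~\ref{Dstrategy}(c), iterating finitely many times yields $\psi^\kappa\in L^r_\beta$ for every $r\in(1,\infty)$ and $\beta>-1/r-1$, which is exactly \eqref{psiinteg}.

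The main obstacle is the bookkeeping in the first step: one must confirm that $\psi^\kappa$ indeed lands in the initial set $D_0$ of Lemma~\ref{Dstrategy} (or some equivalent starting set), i.e. that the a priori regularity $\psi^\kappa\in W^{1,2}_0(\R^N_+)$ is enough to feed the iteration. This is where Proposition~\ref{Lqa}(i) is used: $W^{1,2}_0(\R^N_+)\subset L^{2^*}_0$ (and more generally $\subset L^r_{-\alpha}$ for admissible $(r,\alpha)$), and one checks that the exponent pair $(q,\alpha)$ from Theorem~\ref{Thm1}, or at least one pair in $D_0$, is reached — this follows from \eqref{qalpha} by the same elementary inequalities already displayed in the proof of Proposition~\ref{Kform}. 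Once the iteration is primed, everything else is a direct reuse of Lemma~\ref{Dstrategy} and involves no new ideas; the fixed-point structure $\psi^\kappa=G[a\psi^\kappa]$ is even simpler than the $V^\kappa_j$-iteration because there is no nonlinearity to control, only the linear multiplier $a$.
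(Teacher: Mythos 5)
Your first step --- using \eqref{up-1}, Remark~\ref{120to-12} and Proposition~\ref{G120} to pass from the weak equation for $\psi^\kappa$ to the integral identity --- is exactly the paper's argument, and your remark about the missing factor $\lambda^\kappa$ in \eqref{psiIE} is a fair observation (the identity is only invoked later at $\kappa=\kappa^*$, where $\lambda^{\kappa^*}=1$).

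The gap is in how you prime the bootstrap. You claim that Sobolev embedding gives $\psi^\kappa\in L^q_\alpha$, so that the iteration of Lemma~\ref{Dstrategy} can be started at $D_0=\{(q,\alpha)\}$. But $\psi^\kappa\in W^{1,2}_0(\R^N_+)$ only yields $\psi^\kappa\in L^2\cap L^{2N/(N-2)}$ (for $N\ge 3$), whereas under \eqref{qalpha} the exponent $q$ can be far larger than $2N/(N-2)$: taking $\alpha=0$, the constraints reduce to $q>\max\{p,N(p-1)/2\}$, so for large $p$ no Sobolev embedding of $W^{1,2}_0$ reaches $L^q$. Your fallback ``or at least one pair in $D_0$'' does not help either, since $D_0$ in Lemma~\ref{Dstrategy} is the singleton $\{(q,\alpha)\}$. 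The paper closes this gap with two ingredients you omit: first, elliptic regularity (\cite{GT}*{Theorem 8.17}, cf.\ Remark~\ref{sigmaRmk}) applied to $-\Delta\psi^\kappa+\psi^\kappa=\lambda^\kappa p(u^\kappa)^{p-1}\psi^\kappa$ gives $\psi^\kappa\in L^2(\R^N_+)\cap L^\infty(\R^N_+)\subset L^{r_0}(\R^N_+)$ for \emph{every} $r_0\ge 2$, in particular for $r_0$ with $1/r_0<\min\{1-(p-1)/q,\,2-(p-1)(1/q+\alpha)\}$; second, one must use the generalized iteration Proposition~\ref{Dstrategy'}, whose starting pair $(r_0,\beta_0)=(r_0,0)$ need only satisfy \eqref{s0b0}, rather than Lemma~\ref{Dstrategy}, which is hard-wired to start at $(q,\alpha)$. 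With these two replacements your bootstrap then runs as you describe and yields \eqref{psiinteg}; without them the iteration cannot be started for general admissible $(q,\alpha)$.
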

\begin{proof}
It follows from $(u^\kappa)^{p-1}\in L^{q/(p-1)}_{(p-1)\alpha}$ and \eqref{up-1} that $(u^\kappa)^{p-1}\psi\in W^{-1,2}(\R^N_+)$ (see Remark~\ref{120to-12}). Assertion \eqref{psiIE} follows from this and Proposition~\ref{G120}.

Furthermore, it follows from elliptic regularity theorems (see e.g. \cite{GT}*{Theorem 8.17}. See also Remark~\ref{sigmaRmk}) that $\psi^{\kappa}\in L^2(\R^N_+)\cap L^\infty(\R^N_+)\subset L^{r_0}(\R^N_+)$. Here $r_0\in[2,\infty)$ satisfies
\[
\frac{1}{r_0}<\min\left\{1-\frac{p-1}{q},2-(p-1)\left(\frac{1}{q}+\alpha\right)\right\}.
\]
This together with \eqref{psiIE} and Proposition~\ref{Dstrategy'} implies that
\[
\psi^\kappa\in L^r_\beta\FA (r,\beta)\in D_j\left(r_0,0\right),\quad j\in\{1,2,\ldots\},
\]
where $D_j(r_0,0)$ is as in \ref{Dst}. Taking sufficiently large $j$, we obtain \eqref{psiinteg} and complete the proof of Lemma~\ref{psireg}.
\end{proof}

Now we are ready to prove Theorem~\ref{Thm1}.
\begin{proof}[Proof of Theorem~{\rm\ref{Thm1}}]
By Proposition~\ref{Kform}, it remains to prove that $\kappa^*<\infty$. Let $\kappa\in(0,\kappa^*)$. It follows from the definition of solutions to problem~\eqref{P} that $\kappa P[\mu]\le u^\kappa$ a.e. in $\R^N_+$. This together with \eqref{Rayleigh} implies that
\[
\kappa^{p-1}\le\inf_{\substack{\omega\in W^{1,2}_0(\R^N_+),\\ \omega\not\equiv 0}}\frac{\integ{\R^N_+}(|\nabla\omega|^2+\omega^2)dx}{\integ{\R^N_+}pP[\mu]^{p-1}\omega^2dx}<\infty.
\]
This implies that $\kappa^*<\infty$ and completes the proof of Theorem~\ref{Thm1}.
\end{proof}
\section{Uniform estimates of $w^\kappa$}\label{unformest}
In this section, we give uniform estimates of $\{w^\kappa\}_{\kappa\in(0,\kappa^*)}$ to prove the following proposition.
\begin{Proposition}\label{l*existence}
Assume the same conditions as in Theorem~{\rm\ref{Thm2}}. Then $\kappa^*\in\mathcal{K}$.
\end{Proposition}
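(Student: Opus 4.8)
The goal is to show $\kappa^*\in\mathcal K$ under the Joseph--Lundgren subcritical condition $1<p<p_{JL}$, i.e. to produce an $L^q_\alpha$-solution at the threshold value. The natural strategy is to take a sequence $\kappa_n\uparrow\kappa^*$, consider the minimal solutions $u^{\kappa_n}=U^{\kappa_n}_{j_*}+w^{\kappa_n}$ with $w^{\kappa_n}\in W^{1,2}_0(\R^N_+)$ as in Proposition~\ref{weaksol}, and pass to the limit. By Lemma~\ref{existence}(ii) the family $\{u^{\kappa_n}\}$ is monotone increasing, so $u^*(x):=\lim_n u^{\kappa_n}(x)$ exists pointwise a.e.; one then wants to show $u^*\in L^q_\alpha$ and that it satisfies the integral equation \eqref{IE} with $\kappa=\kappa^*$. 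By the monotone convergence theorem applied to $\kappa_n P[\mu]+G[(u^{\kappa_n})^p]$, this reduces entirely to an \emph{a priori bound}: $\sup_n\|u^{\kappa_n}\|_{L^q_\alpha}<\infty$ (equivalently $\sup_n\|w^{\kappa_n}\|_{W^{1,2}_0}<\infty$ together with the already-controlled $U^{\kappa_n}_{j_*}$, which is dominated by $U^{\kappa^*}_{j_*}\in L^q_\alpha$).

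The key mechanism for the uniform bound is the ``stability'' estimate \eqref{Rayleigh}: for every $\kappa\in(0,\kappa^*)$,
\[
\integ{\R^N_+}p(u^\kappa)^{p-1}\omega^2dx\le\integ{\R^N_+}(|\nabla\omega|^2+\omega^2)dx\FA\omega\in W^{1,2}_0(\R^N_+).
\]
The plan is to test this inequality with $\omega$ a suitable power of $w^\kappa$ (or of $u^\kappa$), combined with testing the weak equation \eqref{wwsol} itself against a related power. This is the standard Crandall--Rabinowitz / Brezis--Vázquez bootstrap for stable solutions: the equation gives $\int(|\nabla w^\kappa|^2+(w^\kappa)^2)=\int((u^\kappa)^p-(U^\kappa_{j_*})^p)w^\kappa\le\int p(u^\kappa)^{p-1}u^\kappa w^\kappa$-type terms, while stability with a well-chosen test function of the form $(w^\kappa)^{\gamma}$ or $(u^\kappa)^{\gamma}w^\kappa$ controls a higher $L^s$ norm of $u^\kappa$ in terms of a lower one, provided the exponent $\gamma$ can be chosen in the range where the algebra closes. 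The condition $p<p_{JL}$ is precisely what guarantees the existence of such an exponent $\gamma$ with $1<\gamma<1+\sqrt{p/(p-1)}$ (or the analogous window), so that the self-improving inequality has a fixed point giving a bound in $L^{p\gamma}$ or better; one then translates this back, via the integral inequality of Proposition~\ref{Glaa} (in the form \eqref{Lq/p1}) and the Sobolev embeddings of Proposition~\ref{Lqa}, into a uniform $L^q_\alpha$ bound for $u^{\kappa}=\kappa P[\mu]+G[(u^\kappa)^p]$, using $P[\mu]\in L^q_\alpha$ from \eqref{pmu}.

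Concretely I would proceed as follows. First, fix $\kappa\in(0,\kappa^*)$ and record that $w^\kappa\ge 0$, $u^\kappa\le u^{\kappa^*}$ is false a priori (we don't yet know $u^{\kappa^*}$ exists) but $u^\kappa$ is increasing in $\kappa$, so it suffices to bound the increasing limit. Second, derive the self-improving estimate: multiply \eqref{wwsol} by an appropriate test function and use \eqref{Rayleigh} with $\omega=(u^\kappa)^{(s-1)/2}\eta$ for a cutoff $\eta$ and a parameter $s$ in the Joseph--Lundgren window, integrate by parts, absorb the gradient terms, and let the cutoff tend to the whole space (this last step needs the decay of $u^\kappa$ as $x_N\to\infty$, which holds by the Remark after Theorem~\ref{Thm2}, and the weighted integrability near $\partial\R^N_+$ from \eqref{iterat}). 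This yields $\|u^\kappa\|_{L^{s}(\R^N_+)}\le C$ with $C$ independent of $\kappa$, for some $s>\frac N2(p-1)$ large enough (here is where $p<p_{JL}$ enters, ensuring $s$ can be taken past the critical Sobolev-type threshold). Third, feed this into the representation $u^\kappa=\kappa P[\mu]+G[(u^\kappa)^p]$: split the domain into $\{x_N<1\}$ and $\{x_N\ge 1\}$, use Proposition~\ref{Glaa} to turn the uniform $L^{s}$-bound (with $s/p$ on the right) into a uniform $L^q_\alpha$-bound on $G[(u^\kappa)^p]$, add $\kappa^*\|P[\mu]\|_{L^q_\alpha}$, and conclude $\sup_{\kappa<\kappa^*}\|u^\kappa\|_{L^q_\alpha}<\infty$. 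Finally, pass to the limit $\kappa\uparrow\kappa^*$ by monotone convergence in \eqref{IE}, obtaining $u^{\kappa^*}\in L^q_\alpha$ with $u^{\kappa^*}=\kappa^*P[\mu]+G[(u^{\kappa^*})^p]>0$, hence $\kappa^*\in\mathcal K$.

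\textbf{Main obstacle.} The crux is Step~two: choosing the test-function exponent and carrying out the integration by parts so that the stability inequality \eqref{Rayleigh} genuinely upgrades the integrability of $u^\kappa$ with a $\kappa$-uniform constant. The delicate points are (a) justifying the cutoff removal on the unbounded domain $\R^N_+$ with the weighted spaces $L^r_\beta$ — one must check that boundary terms at $x_N=0$ and at infinity vanish, which requires the fine integrability \eqref{w*}/\eqref{iterat} near the boundary and the exponential-type decay of $E$ (hence of $G$ and $u^\kappa$) at infinity; and (b) ensuring the quadratic form manipulation closes, i.e. that the coefficient in front of $\int(u^\kappa)^{p-1+s-1}$ coming from the left side strictly dominates the one from the right, which is exactly the algebraic content of $p<p_{JL}$. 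Everything after the uniform $L^q_\alpha$ bound (the limit passage) is routine monotone convergence.
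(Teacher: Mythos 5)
Your overall strategy coincides with the paper's: combine the stability inequality \eqref{Rayleigh} with the equation tested against powers of the solution, use the Joseph--Lundgren condition to open the window of admissible exponents, upgrade to an $L^\infty$-type bound by elliptic regularity, and pass to the monotone limit. So the skeleton is right. But the step you single out as the ``main obstacle'' is exactly where your concrete plan diverges from a workable one, and it cannot be deferred. Testing \eqref{Rayleigh} with $\omega=(u^\kappa)^{(s-1)/2}\eta$ is problematic because admissible test functions must lie in $W^{1,2}_0(\R^N_+)$, while $u^\kappa$ equals $\kappa\mu$ on $\partial\R^N_+$ and may even be singular there; removing the cutoff near $\{x_N=0\}$ has no reason to succeed, and the boundary terms you worry about are genuinely out of control. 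The paper's resolution is to never test with powers of $u^\kappa$: it uses the decomposition $u^\kappa=U^\kappa_{j_*+1}+w^\kappa$ from Proposition~\ref{weaksol}, where $w^\kappa\in W^{1,2}_0(\R^N_+)\cap L^\infty(\R^N_+)$, so that $(w^\kappa)^\nu\in W^{1,2}_0(\R^N_+)$ for every $\nu\ge1$ with no cutoff at all. One then tests \eqref{wwsol} against $(w^\kappa)^{2\nu-1}$, uses the pointwise inequality $|\nabla(w^\kappa)^\nu|^2\le\frac{\nu^2}{2\nu-1}\,\nabla w^\kappa\cdot\nabla(w^\kappa)^{2\nu-1}$, and applies \eqref{Rayleigh} with $\omega=(w^\kappa)^\nu$; absorption works precisely when $\nu^2/(2\nu-1)<p$, and $p<p_{JL}$ guarantees (via Farina's computation) a $\nu$ in $[1,p+\sqrt{p^2-p})\cap((N-2)(p-1)/4,\infty)$, the lower bound being what makes $2N\nu/((N-2)(p-1))>N/2$ so that elliptic regularity yields $\sup_{\kappa}\|w^\kappa\|_{L^\infty}<\infty$.

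Two further points your sketch glosses over. First, the right-hand side of \eqref{wwsol} is $(w^\kappa+U^\kappa_{j_*+1})^p-(U^\kappa_{j_*})^p$, not $p(u^\kappa)^{p-1}w^\kappa$; closing the absorption argument requires the $\varepsilon$-splitting of this difference (inequality \eqref{NRG2}, via \cite[Lemma 5.1]{IOS01}) and uniform control of the correction terms involving $U^\kappa_{j_*}$ and $V^{\kappa^*}$ through the weighted H\"older/Sobolev estimates \eqref{NRG4}; none of this is visible in your outline. Second, the final passage to the limit is not a single application of monotone convergence in $L^q_\alpha$: the uniform bounds give $w^*\in L^2(\R^N_+)\cap L^\infty(\R^N_+)$ solving the integral equation, and one then recovers $u^{\kappa^*}\in L^q_\alpha$ by rerunning the bootstrap of Lemma~\ref{psireg} (the sets $D_j$), rather than by first establishing a direct uniform $L^q_\alpha$ bound on $u^{\kappa_n}$ as you propose.
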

Our uniform estimates are based on a uniform energy estimate for $(w^\kappa)^\nu$ and elliptic regularity theorems.
\begin{Lemma}
Assume the same conditions as in Theorem~{\rm\ref{Thm1}}. Let $\nu\ge 1$ be such that
\begin{equation}\label{nuassump1}
\frac{\nu^2}{2\nu-1}<p.
\end{equation}
Then
\begin{equation}\label{NRGest}
\sup_{\kappa\in(0,\kappa^*)}\integ{\R^N_+}\left(|\nabla(w^\kappa)^\nu|^2+(w^\kappa)^{2\nu}\right)dx<\infty.
\end{equation}
\end{Lemma}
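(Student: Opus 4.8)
The plan is to test the weak equation \eqref{wwsol} for $w^\kappa$ against a suitable power of $w^\kappa$, namely $(w^\kappa)^{2\nu-1}$, and to exploit the stability inequality \eqref{Rayleigh} to absorb the resulting nonlinear term. First I would record the pointwise inequality
\[
(w^\kappa+U^\kappa_{j_*+1})^p-(U^\kappa_{j_*})^p\le (u^\kappa)^p\le p(u^\kappa)^{p-1}w^\kappa+C\big((U^\kappa_{j_*})^p+(U^\kappa_{j_*+1})^p\big),
\]
or a variant thereof, so that the right-hand side of \eqref{wwsol} is controlled by $p(u^\kappa)^{p-1}w^\kappa$ plus a term built from the approximate solutions $U^\kappa_{j_*},U^\kappa_{j_*+1}$, which by \eqref{Umonotone} are bounded above (pointwise, uniformly in $\kappa\in(0,\kappa^*)$) by $U^{\kappa^*}_{j_*+1}\in L^q_\alpha$. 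I would need to check that $(w^\kappa)^{2\nu-1}$ is an admissible test function; this follows from \eqref{w*}, which places $w^\kappa$ in every $L^r_\beta$ with $1/r+\beta>-1$, together with a truncation argument ($w^\kappa\wedge M$) to justify the integration by parts rigorously before passing to the limit $M\to\infty$ by monotone convergence.

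Carrying out the test, the left-hand side produces $\frac{2\nu-1}{\nu^2}\int|\nabla(w^\kappa)^\nu|^2+\int(w^\kappa)^{2\nu}$, using $\nabla w^\kappa\cdot\nabla(w^\kappa)^{2\nu-1}=\frac{2\nu-1}{\nu^2}|\nabla(w^\kappa)^\nu|^2$. The main nonlinear term becomes $\int p(u^\kappa)^{p-1}(w^\kappa)^{2\nu}=\int p(u^\kappa)^{p-1}\big((w^\kappa)^\nu\big)^2$, to which I apply the stability inequality \eqref{Rayleigh} with $\omega=(w^\kappa)^\nu\in W^{1,2}_0(\R^N_+)$ (membership again from \eqref{w*} plus the gradient bound being established), giving
\[
\int p(u^\kappa)^{p-1}(w^\kappa)^{2\nu}\le \int\big(|\nabla(w^\kappa)^\nu|^2+(w^\kappa)^{2\nu}\big).
\]
The coefficient on the left of the tested identity is $\frac{2\nu-1}{\nu^2}$, and condition \eqref{nuassump1}, i.e. $\nu^2/(2\nu-1)<p$, is exactly what guarantees $\frac{2\nu-1}{\nu^2}>\frac1p$, so that $\frac{2\nu-1}{\nu^2}-\frac1p>0$ and the $|\nabla(w^\kappa)^\nu|^2$ and $(w^\kappa)^{2\nu}$ terms coming from \eqref{Rayleigh} can be absorbed into the left-hand side with a strictly positive leftover constant. (One must be slightly careful: a factor $1/p$ appears because the stability term carries weight $p(u^\kappa)^{p-1}$ whereas the tested term has weight $p(u^\kappa)^{p-1}$ as well — so the comparison is clean.) What remains on the right is the contribution of the lower-order term $C\int\big((U^\kappa_{j_*})^p+(U^\kappa_{j_*+1})^p\big)(w^\kappa)^{2\nu-1}$.

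To bound this remainder uniformly in $\kappa$, I would use $U^\kappa_{j}\le U^{\kappa^*}_{j_*+1}=:\Phi\in L^q_\alpha$ for $j\in\{j_*,j_*+1\}$ and $\kappa\in(0,\kappa^*)$, so the remainder is at most $C\int\Phi^p(w^\kappa)^{2\nu-1}$. Writing $(w^\kappa)^{2\nu-1}=(w^\kappa)^\nu\cdot(w^\kappa)^{\nu-1}$ and applying Hölder with the weights $h(x_N)$ appropriately, one estimates $\int\Phi^p(w^\kappa)^{2\nu-1}$ by a product of $\|(w^\kappa)^\nu\|_{L^2}$ (hence by the quantity we are bounding, appearing to a power $<2$) and norms of $\Phi$ and lower powers of $w^\kappa$ already controlled by \eqref{w*}; this yields an inequality of the form $X\le C+CX^{\theta}$ with $\theta<1$ and $C$ independent of $\kappa$, from which $X:=\int(|\nabla(w^\kappa)^\nu|^2+(w^\kappa)^{2\nu})$ is bounded uniformly. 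The main obstacle, and the step requiring the most care, is this last closing estimate: one must split $(w^\kappa)^{2\nu-1}$ and choose Hölder exponents and weight exponents so that the exponents satisfy the admissibility range $1/r+\beta>-1$ of \eqref{w*} for the "already known" factors while extracting at most a sublinear power of the unknown $\|(w^\kappa)^\nu\|_{L^2}^2$; verifying that such a choice exists will use the structural conditions \eqref{qalpha}–\eqref{qNp-12} and the Sobolev embeddings of Proposition~\ref{Lqa}, exactly as in the proof of Proposition~\ref{weaksol}. The rest is routine truncation and monotone/dominated convergence.
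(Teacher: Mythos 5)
Your overall strategy (test \eqref{wwsol} with $(w^\kappa)^{2\nu-1}$, use \eqref{Rayleigh} on the main term, treat the remainder by weighted H\"older and Sobolev) is the same as the paper's, but there is a genuine gap in the absorption step, and it sits exactly in your parenthetical remark about the factor $1/p$. With your pointwise decomposition the leading term is $\int p(u^\kappa)^{p-1}(w^\kappa)^{2\nu}\,dx$, i.e.\ it carries the weight $p(u^\kappa)^{p-1}$ \emph{with} the factor $p$. The stability inequality \eqref{Rayleigh} bounds precisely this quantity by $\int(|\nabla(w^\kappa)^\nu|^2+(w^\kappa)^{2\nu})\,dx$ with coefficient $1$ --- no factor $1/p$ is gained. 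Since the tested left-hand side is $\tfrac{2\nu-1}{\nu^2}\int|\nabla(w^\kappa)^\nu|^2+\int(w^\kappa)^{2\nu}$ and $\tfrac{2\nu-1}{\nu^2}\le 1$ for every $\nu\ge 1$ (indeed $\nu^2-(2\nu-1)=(\nu-1)^2\ge0$), the coefficient left over after absorption is $\tfrac{2\nu-1}{\nu^2}-1\le 0$: nothing positive remains, for any $\nu$, and condition \eqref{nuassump1} is never used. The comparison you call ``clean'' is exactly the point at which the argument collapses.

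The missing idea is that one must shave the factor $p$ off the leading term before invoking \eqref{Rayleigh}. The paper does this via the refined elementary inequality of \cite[Lemma 5.1]{IOS01}: for every $\varepsilon>0$ and small $\delta>0$,
\[
(u^\kappa)^p-(U^\kappa_{j_*})^p\le (1+\varepsilon)\,(u^\kappa)^{p-1}\,(u^\kappa-U^\kappa_{j_*})+C\,(U^\kappa_{j_*})^{p-1+2\nu\delta}(u^\kappa-U^\kappa_{j_*})^{1-2\nu\delta},
\]
so the main term becomes $(1+\varepsilon)^2\int(u^\kappa)^{p-1}(w^\kappa)^{2\nu}$ \emph{without} the factor $p$; then \eqref{Rayleigh} yields the bound $\tfrac{(1+\varepsilon)^2}{p}\int(|\nabla(w^\kappa)^\nu|^2+(w^\kappa)^{2\nu})$, and \eqref{nuassump1}, i.e.\ $\tfrac{\nu^2}{2\nu-1}\cdot\tfrac1p<1$, is exactly what makes the absorption close. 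A version of your decomposition can be repaired in the same spirit --- one checks that $(w+U)^p\le(1+\varepsilon)(w+U)^{p-1}w+C_\varepsilon U^p$ holds pointwise --- but then the remainder $C_\varepsilon\int (U^{\kappa^*}_{j_*+1})^p(w^\kappa)^{2\nu-1}$ must be shown to be sublinear in the energy after weighted H\"older/Sobolev; the paper's choice of exponents $p-1+2\nu\delta$ on $U^\kappa_{j_*}$ and $2\nu(1-\delta)$ on $w^\kappa+V^{\kappa^*}$ is engineered precisely so that the resulting weighted Lebesgue exponents fall in the admissible range of Proposition~\ref{Lqa}, a verification your sketch defers.
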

\begin{proof}
We first observe that
\begin{equation}\label{nu22nu-1}
\frac{\nu^2}{2\nu-1}=1+\frac{(\nu-1)^2}{2\nu-1}\ge 1\FA \nu\ge 1.
\end{equation}
Let $\kappa\in(0,\kappa^*)$. It follows from Lemma~\ref{w*} that $w^\kappa,V^\kappa\in W^{1,2}_0(\R^N_+)\cap L^\infty(\R^N_+)$ and thus
\[
(w^\kappa)^k\in W^{1,2}_0(\R^N_+),\quad\nabla(w^\kappa)^k=k(w^\kappa)^{k-1}\nabla w^\kappa,
\]
\[
(V^\kappa)^k\in W^{1,2}_0(\R^N_+),\quad\nabla(V^\kappa)^k=k(V^\kappa)^{k-1}\nabla V^\kappa,
\]
for any $k\ge 1$. Let $\varepsilon>0$ and $\delta>0$. It follows from \eqref{nu22nu-1} that
\begin{equation}\label{NRG1}
\begin{aligned}
\integ{\R^N_+}\left(|\nabla(w^\kappa)^\nu|^2+(w^\kappa)^{2\nu}\right)dx&=\integ{\R^N_+}\left(\nu^2(w^\kappa)^{2\nu-2}\nabla |w^\kappa|^2+(w^\kappa)^{2\nu}\right)dx\\
&\le\frac{\nu^2}{2\nu-1}\integ{\R^N_+}\left(\nabla w^\kappa\cdot\nabla(w^\kappa)^{2\nu-1}+w^\kappa(w^\kappa)^{2\nu-1}\right)dx.
\end{aligned}
\end{equation}
Furthermore, it follows from \eqref{wwsol}, \eqref{Umonotone}, and \cite[Lemma 5.1]{IOS01} that
\begin{equation}\label{NRG2}
\begin{aligned}
&\integ{\R^N_+}\left(\nabla w^\kappa\cdot\nabla(w^\kappa)^{2\nu-1}+(w^\kappa)^{2\nu}\right)dx=\integ{\R^N_+}((u^\kappa)^p-(U^\kappa_{j_*})^p)(w^\kappa)^{2\nu-1}dx\\
&\le\integ{\R^N_+}\left((1+\varepsilon)(u^\kappa)^{p-1}(w^\kappa+V^\kappa)+C(U^\kappa_{j_*})^{p-1+2\nu\delta}(w^\kappa+V^\kappa)^{1-2\nu\delta}\right)(w^\kappa)^{2\nu-1}dx\\
&\begin{multlined}\le\integ{\R^N_+}\left((u^\kappa)^{p-1}\left((1+\varepsilon)^2(w^\kappa)^{2\nu}+C(V^{\kappa^*})^{2\nu}\right)+C(U^\kappa_{j_*})^{p-1+2\nu\delta}\left(w^\kappa+V^{\kappa^*}\right)^{2\nu(1-\delta)}\right)dx.\end{multlined}
\end{aligned}
\end{equation}
Here, \eqref{Rayleigh} implies that
\begin{equation}\label{NRG3}
\begin{gathered}
\integ{\R^N_+}(u^\kappa)^{p-1}(w^\kappa)^{2\nu}dx\le\frac{1}{p}\integ{\R^N_+}\left(|\nabla(w^\kappa)^\nu|^2+(w^\kappa)^{2\nu}\right)dx,\\
\integ{\R^N_+}(u^\kappa)^{p-1}(V^{\kappa^*})^{2\nu}dx\le\frac{1}{p}\integ{\R^N_+}\left(|\nabla(V^{\kappa^*})^\nu|^2+(V^{\kappa^*})^{2\nu}\right)dx\le C.
\end{gathered}
\end{equation}
On the other hand, we set
\[
\frac{1}{r_{\nu,\delta}}:=\frac{1}{2(1-\delta)}\left(1-\frac{p-1+2\nu\delta}{q}\right),\quad \beta_{\nu,\delta}:=\frac{p-1+2\nu\delta}{2(1-\delta)}\alpha,
\]
Since it follows from \eqref{qalpha} that
\[
\frac{1}{2}\left(1-\frac{p-1}{q}\right)\in\left(\frac{1}{2}-\frac{1}{N}\left(1-\frac{(p-1)\alpha}{2}\right),\frac{1}{2}\right),
\]
\[
\frac{1}{2\nu}\left(1-\frac{p-1}{q}\right)-\frac{p-1}{2\nu}\alpha=\frac{1}{\nu}\left(\frac{1}{2}-\frac{p-1}{2}\left(\frac{1}{q}+\alpha\right)\right)>\frac{1}{\nu}\left(\frac{1}{2}-\frac{p-1}{p}\right)>-\frac{1}{2\nu}>-1,
\]
we find a sufficiently small $\delta>0$ such that
\[
r_{\nu,\delta}\in\left(\frac{1}{2}-\frac{1-\beta_{\nu,\delta}}{N},\frac{1}{2}\right),\quad \frac{1}{\nu r_{\nu,\delta}}+\frac{\beta_{\nu,\delta}}{\nu}>-1.
\]
This together with \eqref{W1toLqa}, \eqref{V*}, and the H\"{o}lder inequality implies that
\begin{equation}\label{NRG4}
\begin{multlined}
\integ{\R^N_+}(U^\kappa_{j_*})^{p-1+2\nu\delta}(w^\kappa+V^{\kappa^*})^{2\nu(1-\delta)}\,dx\le\|U^\kappa_{j_*}\|_{L^q_\alpha}^{p-1+2\nu\delta}\left(\|(w^\kappa)^\nu\|_{L^{r_{\nu,\delta}}_{-\beta_{\nu,\delta}}}^{2(1-\delta)}+\|V^{\kappa^*}\|_{L^{\nu r_{\nu,\delta}}_{-\beta_{\nu,\delta}/\nu}}^{2\nu(1-\delta)}\right)\\
\le C\|U^{\kappa^*}_{j_*}\|_{L^q_\alpha}^{p-1+2\nu\delta}\left(\|(w^\kappa)^\nu\|_{W^{1,2}(\R^N_+)}^{2(1-\delta)}+C\right)\le \varepsilon'\integ{\R^N_+}(|\nabla (w^\kappa)^\nu|^2+(w^\kappa)^{2\nu})dx+C
\end{multlined}
\end{equation}
for any $\varepsilon'>0$.

Combining \eqref{NRG1}, \eqref{NRG2}, \eqref{NRG3}, and \eqref{NRG4}, we deduce that
\[
\integ{\R^N_+}\left(|\nabla(w^\kappa)^\nu|^2+(w^\kappa)^{2\nu}\right)dx\le\left(\frac{(1+\varepsilon)^2\nu^2}{p(2\nu-1)}+C\varepsilon'\right)\integ{\R^N_+}\left(|\nabla(w^\kappa)^\nu|^2+(w^\kappa)^{2\nu}\right)dx+C.
\]
This together with \eqref{nuassump1} and an appropriate choice of $\varepsilon$ and $\varepsilon'$ implies \eqref{NRGest}.
\end{proof}
\begin{Lemma}
Assume the same conditions as in Theorem~{\rm\ref{Thm1}} and $1<p<p_{JL}$. Then
\begin{equation}\label{unif}
\sup_{\kappa\in(0,\kappa^*)}\|w^\kappa\|_{L^\infty(\R^N)}<\infty.
\end{equation}
\end{Lemma}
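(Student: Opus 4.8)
Since $1<p<p_{JL}$, the first step is to fix an exponent $\nu$ that is admissible for the previous lemma and, at the same time, large enough to reach a \emph{supercritical} integrability through Sobolev. For $N\ge 3$ I would choose $\nu\ge 1$ with
\[
\frac{\nu^2}{2\nu-1}<p\quad\text{and}\quad\frac{2N\nu}{N-2}>\frac{N(p-1)}{2},\qquad\text{i.e.}\qquad 1\le\nu<p+\sqrt{p^2-p}\quad\text{and}\quad\nu>\frac{(N-2)(p-1)}{4},
\]
where I used $p-\sqrt{p^2-p}<1<p+\sqrt{p^2-p}$ to simplify \eqref{nuassump1}. Such a $\nu$ exists precisely when $(N-2)(p-1)/4<p+\sqrt{p^2-p}$, and an elementary computation — square, and compare with the quadratic in $p$ whose discriminant produces $\sqrt{N-1}$ — shows that this is exactly $p<p_{JL}$; this is the only point where the Joseph--Lundgren exponent enters. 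When $N\le 2$ one has $p_S=p_{JL}=\infty$ and $W^{1,2}_0(\R^N_+)$ embeds into $L^\infty(\R^N_+)$ ($N=1$) or into every $L^m(\R^N_+)$, $m<\infty$ ($N=2$), so the argument below simplifies; I treat $N\ge 3$.

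Next I record the consequences of \eqref{NRGest}. By the Sobolev embedding $W^{1,2}_0(\R^N_+)\hookrightarrow L^{2N/(N-2)}(\R^N_+)$ (a special case of Proposition~\ref{Lqa}~(i)), \eqref{NRGest} yields $\sup_{\kappa\in(0,\kappa^*)}\|w^\kappa\|_{L^{m_0}(\R^N_+)}<\infty$ with $m_0:=2N\nu/(N-2)>N(p-1)/2$. By the convexity of $t\mapsto t^p$ and $w^\kappa_{j_*}=w^\kappa+V^\kappa_{j_*}$,
\[
0<f^\kappa:=(u^\kappa)^p-(U^\kappa_{j_*})^p\le p\,(u^\kappa)^{p-1}w^\kappa_{j_*}\le C\bigl((U^{\kappa^*}_{j_*+1})^{p-1}+(w^\kappa)^{p-1}\bigr)\bigl(w^\kappa+V^{\kappa^*}_{j_*}\bigr),
\]
where I used $u^\kappa=U^\kappa_{j_*+1}+w^\kappa\le U^{\kappa^*}_{j_*+1}+w^\kappa$ and $V^\kappa_{j_*}\le V^{\kappa^*}_{j_*}$, both from \eqref{Umonotone}. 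Since $w^\kappa$ is the weak $W^{1,2}_0$-solution of \eqref{wwsol} and $f^\kappa\in W^{-1,2}(\R^N_+)$ (this is \eqref{U-UW-1r}), Proposition~\ref{G120} gives $w^\kappa=G[f^\kappa]$. Here $U^{\kappa^*}_{j_*+1}\in L^q_\alpha$ and $V^{\kappa^*}_{j_*}\in L^r_\beta$ for every $r\in(1,\infty)$, $\beta>-1-1/r$ (by \eqref{V*}), and these are $\kappa$-independent fixed functions.

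The heart of the proof is then a finite bootstrap. Suppose $\sup_\kappa\|w^\kappa\|_{L^{m}_{-\alpha_m}}<\infty$ for some admissible pair obtained so far, starting from $(m_0,0)$. Expanding the bound on $f^\kappa$ into the four products $(U^{\kappa^*}_{j_*+1})^{p-1}w^\kappa$, $(U^{\kappa^*}_{j_*+1})^{p-1}V^{\kappa^*}_{j_*}$, $(w^\kappa)^{p-1}V^{\kappa^*}_{j_*}$, $(w^\kappa)^p$, estimating each by H\"older's inequality (using $U^{\kappa^*}_{j_*+1}\in L^q_\alpha$ and $V^{\kappa^*}_{j_*}$ in the weighted spaces above), and applying Proposition~\ref{Glaa} to $w^\kappa=G[f^\kappa]$ — which gains essentially $2/N$ in the reciprocal exponent, exactly as $(-\Delta+1)^{-1}$ does — I obtain a uniform bound in a strictly better space $L^{m'}_{-\alpha_{m'}}$. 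The dominant term is $(w^\kappa)^p$, yielding (up to harmless weight shifts) the recursion $1/m'=p/m-2/N$; because $m_0>N(p-1)/2$, the fixed point of this recursion, the exponents strictly improve at every step, the other three terms being of lower order (they involve $w^\kappa$ only to power $1$ or $p-1$, times a fixed function). After finitely many steps $f^\kappa$ is uniformly bounded in some $L^s_\gamma$ with $N/s<2-\gamma$, in particular $s>N/2$. A final use of $w^\kappa=G[f^\kappa]$ then closes the argument: $w^\kappa(x)\le\int_{\R^N_+}G(x,y)f^\kappa(y)\,dy\le\|G(x,\cdot)\|_{L^{s'}_{-\gamma}}\|f^\kappa\|_{L^s_\gamma}$ and $\sup_{x\in\R^N_+}\|G(x,\cdot)\|_{L^{s'}_{-\gamma}}<\infty$, since the singularity $|x-y|^{2-N}$ of $G$ is locally $L^{s'}$ for $s>N/2$ and $E$ decays exponentially; alternatively one invokes interior and boundary $L^\infty$-estimates for \eqref{wwsol} (cf. Remark~\ref{sigmaRmk} and \cite{IOS01,IK}). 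This proves \eqref{unif}.

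The main obstacle is the algebraic fact in the first paragraph: that the simultaneous solvability of $\nu^2/(2\nu-1)<p$ and $\nu>(N-2)(p-1)/4$ is equivalent to $p<p_{JL}$, which is what allows the bootstrap to be started strictly above the self-improvement threshold $N(p-1)/2$ and hence to run in the improving direction. The remaining points are bookkeeping: keeping the weights in the admissible range $\beta>-1-1/r$ throughout, bounding the three auxiliary terms uniformly in $\kappa$ (where the monotonicity \eqref{Umonotone} and the $\kappa$-independent membership of $U^{\kappa^*}_{j_*+1}$, $V^{\kappa^*}_{j_*}$ are used), and checking that the number of iteration steps is finite and uniform in $\kappa$.
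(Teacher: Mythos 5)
Your proposal is correct, and its core coincides with the paper's proof: you select $\nu\ge 1$ with $\nu^2/(2\nu-1)<p$ and $\nu>(N-2)(p-1)/4$, observe that such a $\nu$ exists exactly when $p<p_{JL}$ (this is the content of the set $L$ in the paper, taken from Farina), and combine \eqref{NRGest} with the Sobolev inequality to get a uniform bound in $L^{2N\nu/(N-2)}$ with $2N\nu/((N-2)(p-1))>N/2$. Where you diverge is the passage from this uniform $L^{m_0}$ bound to $L^\infty$: the paper writes $0<-\Delta w^\kappa\le p(w^\kappa+U^\kappa_{j_*})^{p-1}(w^\kappa+V^\kappa)$ and applies the local boundedness estimate (\cite{GT}*{Theorem 8.17}, in the form of Remark~\ref{sigmaRmk}) once, with the coefficient $p(w^\kappa+U^\kappa_{j_*})^{p-1}$ uniformly bounded in $L^{2N\nu/((N-2)(p-1))}+L^{q/(p-1)}_{(p-1)\alpha}$, whereas you run a finite integral bootstrap through $w^\kappa=G[f^\kappa]$ and Proposition~\ref{Glaa}. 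Both routes are legitimate; yours is more self-contained (it uses only the paper's own kernel estimates and avoids citing the De Giorgi--Nash--Moser machinery), at the price of the weight bookkeeping you defer. That bookkeeping does close: the dominant recursion $1/m'=p/m-2/N$ contracts away from the fixed point $2/(N(p-1))$ precisely because $m_0>N(p-1)/2$, and the three auxiliary terms each gain a fixed amount of integrability per step since $(p-1)/q<2/N$ by \eqref{qNp-12}; the terminal step $\sup_x\|G(x,\cdot)\|_{L^{s'}_{-\gamma}}<\infty$ for $s>N/2$, $N/s\le 2-\gamma$ is exactly Lemma~\ref{Gintestt}. If you write this out in full, make sure at each stage that the source pair $(s,\gamma)$ satisfies $1/s+\gamma<2$ and the target pair stays in $D_*$, i.e.\ $\beta>-1-1/r$; these are the hypotheses of Proposition~\ref{Glaa} that are easy to violate silently when the weight $(p-1)\alpha$ from the $U$-terms enters.
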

\begin{proof}
By \cite{F}*{Proof of Theorem 1} with a change of variable $\gamma=2\nu-1$,
\[
L:=\left(\frac{(N-2)(p-1)}{4},\infty\right)\cap[1,p+\sqrt{p^2-p})\neq\emptyset
\]
if and only if $1<p<p_{JL}$. Since
\[
\frac{d}{d\nu}\frac{\nu^2}{2\nu-1}=\left(\frac{2}{\nu}-\frac{2}{2\nu-1}\right)\frac{\nu^2}{2\nu-1}>0\for\nu>1,
\]
$\nu^2/(2\nu-1)$ is strictly increasing for $\nu\ge 1$. This together with
\[
\left.\frac{\nu^2}{2\nu-1}\right|_{\nu=p+\sqrt{p^2-p}}=\frac{2p^2-p+2p\sqrt{p^2-p}}{2p-1+2\sqrt{p^2-p}}=p
\]
implies that
\[
\frac{\nu^2}{2\nu-1}<p\FA \nu\in L.
\]
Let $\nu\in L$. It follows from \eqref{NRGest} and the Sobolev inequality that
\[
\sup_{\kappa\in(0,\kappa^*)}\|w^\kappa\|_{L^{2N\nu/(N-2)}(\R^N_+)}<\infty.
\]
This together with $\nu\in L$, \eqref{V*}, and \eqref{Umonotone} implies that
\[
\sup_{\kappa\in(0,\kappa^*)}\|(w^\kappa)^{p-1}\|_{L^{2N\nu/((N-2)(p-1))}(\R^N_+)}<\infty,\quad \frac{2N}{(N-2)(p-1)}>\frac{N}{2},
\]
\[
\sup_{\kappa\in(0,\kappa^*)}\|(U^\kappa_{j_*})^{p-1}\|_{L^{q/(p-1)}_{(p-1)\alpha}}\le\|U^{\kappa^*}_{j_*}\|_{L^q_\alpha}^{p-1}<\infty,\quad \sup_{\kappa\in(0,\kappa^*)}\|V^\kappa\|_{L^\infty}\le\|V^{\kappa^*}\|_{L^\infty}<\infty.
\]
Since
\[
0<-\Delta w^\kappa\le p(w^\kappa+U^\kappa_{j_*})^{p-1}(w^\kappa+V^\kappa)\jn\R^N_+,
\]
assertion \eqref{unif} follows from elliptic regularity theorems (see e.g. \cite{GT}*{Theorem 8.17}. See also Remark~\ref{sigmaRmk}).
\end{proof}
Now we complete the proof of Proposition~\ref{l*existence}.
\begin{proof}[Proof of Proposition {\rm\ref{l*existence}}]
It follows from \eqref{NRGest} and \eqref{unif} that the limit
\[
w^*(x):=\lim_{\kappa\to\kappa^*-0}w^\kappa(x)
\]
exists for a.a. $x\in\R^N_+$, $w^*\in L^2(\R^N_+)\cap L^\infty(\R^N_+)$, and $w^*$ is a solution to integral equation \eqref{IE} with $\kappa=\kappa^*$. By the same argument as in Lemma~\ref{psireg}, it follows that $w^{\kappa^*}\in L^q_\alpha$ and the proof of Proposition~\ref{l*existence} is complete.
\end{proof}
\section{Proof of Theorem~\ref{Thm2}}\label{PfThm2}
In this section, we complete the proof of Theorem~\ref{Thm2}. In particular, we prove the uniqueness of an $L^q_\alpha$-solution in the case $\kappa=\kappa^*$, and the multiplicity assertion (iii).

We first characterize $\kappa^*$ using the first eigenvalue $\lambda^\kappa$ of eigenvalue problem \eqref{E}. Let $\Psi(v,\kappa)$ be as in \eqref{Psi} and set
\[
\Phi(v,\kappa):=v-\Psi(v,\kappa)\for (v,\kappa)\in L^q_\alpha\times(0,\infty).
\]
It follows from \eqref{Phiest0} that $\Phi: L^q_\alpha\times(0,\infty)\to L^q_\alpha$. It is also clear that $\Phi\in C^1(L^q_\alpha\times(0,\infty),L^q_\alpha)$ and
\begin{equation}\label{Phiv}
\Phi_v(v,\kappa)h=h-G[pv_+^{p-1}h]\for h\in L^q_\alpha,\quad (v,\kappa)\in L^q_\alpha\times(0,\infty).
\end{equation}

\begin{Lemma}\label{l*=1}
Assume the same conditions as in Theorem~{\rm\ref{Thm2}} and let $\kappa\in\mathcal{K}$. If $\lambda^\kappa>1$, then $\kappa<\kappa^*$. Furthermore, $\lambda^{\kappa^*}=1$.
\end{Lemma}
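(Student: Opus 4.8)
The plan is to prove the two assertions separately, using the $C^1$ map $\Phi$ and its partial derivative $\Phi_v$ introduced above together with the compactness result of Proposition~\ref{cpt} and the stability inequality \eqref{Rayleigh}.

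\textbf{Step 1: if $\lambda^\kappa>1$, then $\kappa<\kappa^*$.} Fix $\kappa\in\mathcal K$ with $\lambda^\kappa>1$, put $a:=p(u^\kappa)^{p-1}$ and $T:=G[a\,\cdot\,]$. By \eqref{up-1} we have $a\in L^{q/(p-1)}_{(p-1)\alpha}$, and the quadruple $(r,\beta,s',\theta)=(q,\alpha,q/(p-1),(p-1)\alpha)$ satisfies \eqref{Cptassump} (the four inequalities reduce to $q>p$, to the two conditions in \eqref{qalpha}, and to \eqref{qNp-12}); hence $T$ is compact on $L^q_\alpha$ by Proposition~\ref{cpt}, so $\Phi_v(u^\kappa,\kappa)=I-T$ by \eqref{Phiv} is Fredholm of index zero. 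By the Fredholm alternative it is an isomorphism of $L^q_\alpha$ as soon as $\Ker(I-T)=\{0\}$. If $h\in L^q_\alpha$ satisfies $h=G[ah]$, then running the weighted-space bootstrap of Lemma~\ref{psireg} (that is, the iteration in Lemma~\ref{Dstrategy} / Proposition~\ref{Dstrategy'}, started from $h\in L^q_\alpha$) gives $h\in L^r_\beta$ for every $(r,\beta)$ with $1/r+\beta>-1$; then $ah\in W^{-1,2}(\R^N_+)$ by Remark~\ref{120to-12}, so Proposition~\ref{G120} shows that $h\in W^{1,2}_0(\R^N_+)$ is a weak solution of $-\Delta h+h=p(u^\kappa)^{p-1}h$, i.e.\ an eigenfunction of \eqref{E} for $\lambda=1$; since every eigenvalue of \eqref{E} is $\ge\lambda^\kappa>1$, this forces $h\equiv0$. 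Thus $\Phi_v(u^\kappa,\kappa)$ is invertible, and since $\Phi\in C^1$ with $\Phi(u^\kappa,\kappa)=0$, the implicit function theorem yields $\varepsilon>0$ and a $C^1$ curve $\kappa'\mapsto v(\kappa')\in L^q_\alpha$ on $(\kappa-\varepsilon,\kappa+\varepsilon)$ with $v(\kappa)=u^\kappa$ and $\Phi(v(\kappa'),\kappa')=0$. For $\kappa'\in(\kappa,\kappa+\varepsilon)$ we have $v(\kappa')=\kappa'P[\mu]+G[(v(\kappa'))_+^p]\ge\kappa'P[\mu]>0$ (here we use that $\mu$ is nontrivial nonnegative), so $v(\kappa')$ is an $L^q_\alpha$-solution of \eqref{P} and $\kappa'\in\mathcal K$; hence $\kappa^*\ge\kappa'>\kappa$.

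\textbf{Step 2: $\lambda^{\kappa^*}=1$.} Under the hypotheses of Theorem~\ref{Thm2}, Proposition~\ref{l*existence} gives $\kappa^*\in\mathcal K$, so $u^{\kappa^*}$, $\lambda^{\kappa^*}$ and $\psi^{\kappa^*}$ are defined; the contrapositive of Step~1 yields $\lambda^{\kappa^*}\le1$. For the reverse inequality, note that by Lemma~\ref{existence}(ii) the family $\{u^\kappa\}_{\kappa\in(0,\kappa^*)}$ is increasing and bounded above by $u^{\kappa^*}$, so $\tilde u:=\lim_{\kappa\to\kappa^*-}u^\kappa$ exists a.e., lies in $L^q_\alpha$ (as $0<u^\kappa\le\tilde u\le u^{\kappa^*}$), and by monotone convergence in \eqref{IE} solves \eqref{P} with $\kappa=\kappa^*$; minimality of $u^{\kappa^*}$ then gives $\tilde u=u^{\kappa^*}$, so $u^\kappa\uparrow u^{\kappa^*}$ a.e. Using the variational characterization of the first eigenvalue (Lemma~\ref{EVP}), on one hand $u^\kappa\le u^{\kappa^*}$ makes the denominator of the Rayleigh quotient smaller, so $\lambda^\kappa\ge\lambda^{\kappa^*}$ for all $\kappa<\kappa^*$; on the other hand, testing the quotient for $\lambda^\kappa$ with $\psi^{\kappa^*}$ and applying the monotone convergence theorem to $\int_{\R^N_+}p(u^\kappa)^{p-1}(\psi^{\kappa^*})^2\,dx\to\int_{\R^N_+}p(u^{\kappa^*})^{p-1}(\psi^{\kappa^*})^2\,dx$ gives $\limsup_{\kappa\to\kappa^*-}\lambda^\kappa\le\lambda^{\kappa^*}$. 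Hence $\lambda^\kappa\to\lambda^{\kappa^*}$ as $\kappa\to\kappa^*-$, and since $\lambda^\kappa>1$ for every $\kappa\in(0,\kappa^*)$ by \eqref{stab}, we conclude $\lambda^{\kappa^*}\ge1$. Together with $\lambda^{\kappa^*}\le1$ this gives $\lambda^{\kappa^*}=1$.

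\textbf{Expected main obstacle.} The delicate point is entirely in Step~1: one must verify that an arbitrary, possibly sign-changing, element of $\Ker(I-T)$ genuinely belongs to $W^{1,2}_0(\R^N_+)$, so that the eigenvalue comparison can be invoked; this requires re-using the weighted $L^r_\beta$ iteration of Proposition~\ref{weaksol}/Lemma~\ref{psireg} and checking its hypotheses for the new starting point. Once $\Phi_v(u^\kappa,\kappa)$ is known to be invertible, the implicit function theorem applies verbatim and, crucially, the solutions it produces are automatically positive because of the representation $v(\kappa')=\kappa'P[\mu]+G[(v(\kappa'))_+^p]$. Step~2 is then a routine monotonicity-plus-continuity argument for the principal eigenvalue.
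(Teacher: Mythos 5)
Your proof is correct and follows essentially the same route as the paper: invertibility of $\Phi_v(u^\kappa,\kappa)$ via Proposition~\ref{cpt}, the Fredholm alternative, and the bootstrap of Section~\ref{weaksolsec} to show the kernel is trivial, followed by the implicit function theorem. The only difference is that for $\lambda^{\kappa^*}\ge 1$ the paper simply cites the argument of \cite[Lemma 6.1]{IOS01}, whereas you reproduce that argument explicitly (monotone convergence of $u^\kappa\uparrow u^{\kappa^*}$ plus continuity of the first eigenvalue via the Rayleigh quotient), which is a faithful rendering of the cited step.
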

\begin{proof}
It suffices to prove that $\Phi_v(u^\kappa,\kappa):L^q_\alpha\to L^q_\alpha$ is an isomorphism of Banach spaces if $\lambda^{\kappa}>1$. Indeed, this together with the implicit function theorem implies that $\Phi(U,\kappa')=0$ for some $U\in L^q_\alpha$ and $\kappa'>\kappa$, which implies that $\kappa<\kappa'\le\kappa^*$. Furthermore, since $\kappa^*\ge 1$ by the same argument as in \cite[Lemma 6.1]{IOS01}, $\lambda^{\kappa^*}=1$ holds.

We first prove that $\Ker(\Phi_v(u^\kappa,\kappa))=\{0\}$. Let $h\in\Ker(\Phi_v(u^\kappa,\kappa))$. It follows from \eqref{Phiv} that $h=G[p(u^\kappa)^{p-1}h]$. Applying the same argument as in Section~\ref{weaksolsec}, we obtain
\[
h\in W^{1,2}_0(\R^N_+),\quad -\Delta h+h=p(u^\kappa)^{p-1}h\jn\R^N_+.
\]
This together with $\lambda^\kappa>1$ implies that $h\equiv 0$.

It remains to prove that $\Im(\Phi_v(u^\kappa,\kappa))=L^q_\alpha$. It follows from \eqref{qalpha} that
\begin{gather*}
\frac{1}{q}+\alpha>0>-1,\quad\frac{1}{q}+\frac{p-1}{q}=\frac{p}{q}<1,\\
\frac{N(p-1)}{q}+(p-1)\alpha=(p-1)\left(\frac{N}{q}+\alpha\right)<2,\\
\frac{1}{q}+\frac{p-1}{q}+\alpha+(p-1)\alpha=p\left(\frac{1}{q}+\alpha\right)<2.
\end{gather*}
This together with $p(u^\kappa)^{p-1}\in L^{q/(p-1)}_{(p-1)\alpha}$ and Proposition~\ref{cpt} implies that $h\mapsto G[p(u^\kappa)^{p-1}h]$ is a compact operator from $L^q_\alpha$ to itself. It follows from the Fredholm alternative theorem and  $\Ker(\Phi_v(u^\kappa,\kappa))=\{0\}$ that $\Im(\Phi_v(u^\kappa,\kappa))=L^q_\alpha$, and the proof of Lemma~\ref{l*=1} is complete.
\end{proof}
\begin{Lemma}\label{1dim}
Assume the same conditions as in Theorem~{\rm\ref{Thm2}}. Then
\begin{gather}
\Ker(\Phi_v(u^{\kappa^*},\kappa^*))=\operatorname{span}\{\psi^{\kappa^*}\},\label{l*ker}\\
\Im(\Phi_v(u^{\kappa^*},\kappa^*))=\left\{f\in L^q_\alpha: \integ{\R^N_+}p(u^{\kappa^*})^{p-1}\psi^{\kappa^*}fdx=0\right\}.\label{l*im}
\end{gather}
Here $\psi^{\kappa^*}$ is as in \eqref{psik}.
\end{Lemma}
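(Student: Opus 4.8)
The plan is to exploit the self-adjoint structure of the linearized operator $L_\kappa h := h - G[p(u^\kappa)^{p-1}h]$ at $\kappa = \kappa^*$, combined with the Fredholm property from Proposition~\ref{cpt} and the characterization of $\psi^{\kappa^*}$ as the first eigenfunction. First I would establish \eqref{l*ker}. One inclusion is immediate: if $h = c\psi^{\kappa^*}$, then by \eqref{psiIE} and $\lambda^{\kappa^*} = 1$ (Lemma~\ref{l*=1}) we have $h = G[p(u^{\kappa^*})^{p-1}h]$, so $h \in \Ker(\Phi_v(u^{\kappa^*},\kappa^*))$. For the reverse inclusion, let $h \in \Ker(\Phi_v(u^{\kappa^*},\kappa^*))$, so $h = G[p(u^{\kappa^*})^{p-1}h]$ in $L^q_\alpha$. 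Running the bootstrap of Section~\ref{weaksolsec} (more precisely, the argument in the proof of Lemma~\ref{psireg} applied to $h$ in place of $\psi^\kappa$, using that $h \in L^q_\alpha$ and elliptic regularity promote $h$ to $L^2 \cap L^\infty$), we obtain $h \in W^{1,2}_0(\R^N_+)$ and $-\Delta h + h = p(u^{\kappa^*})^{p-1}h$ in $\R^N_+$; that is, $h$ is an eigenfunction of \eqref{E} with $\kappa = \kappa^*$ for the eigenvalue $\lambda = 1 = \lambda^{\kappa^*}$. By the simplicity of the first eigenvalue recorded in \eqref{psik}, $h \in \operatorname{span}\{\psi^{\kappa^*}\}$, which gives \eqref{l*ker}.

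Next I would establish \eqref{l*im}. Write $T := T_{p(u^{\kappa^*})^{p-1}}$, so $\Phi_v(u^{\kappa^*},\kappa^*) = I - T$; by Proposition~\ref{cpt} (whose hypotheses are checked exactly as in the proof of Lemma~\ref{l*=1}, since the same inequalities hold at $\kappa = \kappa^*$), $T$ is compact on $L^q_\alpha$. The Fredholm alternative then gives that $\Im(I - T)$ is closed and equals the annihilator of $\Ker(I - T^*)$ in the duality between $L^q_\alpha$ and its dual $L^{q/(q-1)}_{-\alpha}$. The key point is to identify $\Ker(I - T^*)$. Since $T f = G[p(u^{\kappa^*})^{p-1}f]$ and $G$ is symmetric, for $f \in L^q_\alpha$ and $g \in L^{q/(q-1)}_{-\alpha}$ we compute
\[
\langle Tf, g\rangle = \integ{\R^N_+}\left(\integ{\R^N_+}G(x,y)p(u^{\kappa^*}(y))^{p-1}f(y)\,dy\right)g(x)\,dx = \integ{\R^N_+}f(y)\,p(u^{\kappa^*}(y))^{p-1}G[g](y)\,dy,
\]
so $T^* g = p(u^{\kappa^*})^{p-1}G[g]$, and the Fubini step is justified because the relevant $G$-mapping properties on the weighted spaces (Proposition~\ref{Glaa} and its dual form \eqref{dual}) guarantee absolute convergence. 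Thus $g \in \Ker(I - T^*)$ means $g = p(u^{\kappa^*})^{p-1}G[g]$; setting $\phi := G[g]$, this reads $\phi = G[p(u^{\kappa^*})^{p-1}\phi]$, i.e. $g = p(u^{\kappa^*})^{p-1}\phi$ with $\phi \in \Ker(I - T) = \operatorname{span}\{\psi^{\kappa^*}\}$. Hence $\Ker(I - T^*) = \operatorname{span}\{p(u^{\kappa^*})^{p-1}\psi^{\kappa^*}\}$, and its annihilator is precisely the set displayed on the right-hand side of \eqref{l*im}. One should also check that $p(u^{\kappa^*})^{p-1}\psi^{\kappa^*}$ genuinely lies in $L^{q/(q-1)}_{-\alpha}$ so that the pairing makes sense; this follows from $(u^{\kappa^*})^{p-1} \in L^{q/(p-1)}_{(p-1)\alpha}$ together with \eqref{psiinteg} and Hölder's inequality, choosing the integrability exponent for $\psi^{\kappa^*}$ large enough.

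The main obstacle is the bootstrap/regularity step in the proof of \eqref{l*ker}: one must upgrade an a priori merely $L^q_\alpha$ solution of the integral identity $h = G[p(u^{\kappa^*})^{p-1}h]$ to a genuine $W^{1,2}_0(\R^N_+)$ eigenfunction so that the simplicity statement \eqref{psik} can be invoked. This is not a formal manipulation: it requires the iteration scheme of Lemma~\ref{Dstrategy} (and Proposition~\ref{Dstrategy'} from the appendix) to push $h$ into spaces $L^r_\beta$ with $1/r + \beta > -1$, then elliptic regularity (as in Remark~\ref{sigmaRmk} and \cite{GT}*{Theorem 8.17}) to reach $L^2 \cap L^\infty$, and finally Proposition~\ref{G120} together with Proposition~\ref{Lqa}~(ii) to recover the weak equation. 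Everything else — the compactness of $T$, the computation of $T^*$, and the Fredholm-alternative bookkeeping — is routine once the weighted-space estimates of Section~\ref{LaG} are in hand.
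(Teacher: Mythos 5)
Your proof of \eqref{l*ker} is the same as the paper's: bootstrap a kernel element $h=G[p(u^{\kappa^*})^{p-1}h]$ into $W^{1,2}_0(\R^N_+)$ via the iteration of Section~\ref{weaksolsec} and elliptic regularity, then invoke the simplicity of the first eigenvalue in \eqref{psik} together with $\lambda^{\kappa^*}=1$. For \eqref{l*im}, however, you take a genuinely different route. The paper never identifies $\Ker(I-T^*)$: it shows directly that $Z$ has codimension one in $L^q_\alpha$, that $\operatorname{codim}\Im=\dim\Ker=1$ by the Fredholm alternative, and that $\Phi_v(u^{\kappa^*},\kappa^*)(C^\infty_c(\R^N_+))\subset Z$ by testing the weak equation against $\psi^{\kappa^*}$ and integrating by parts; a density argument and the codimension count then force $\Im=Z$. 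You instead compute the adjoint $T^*g=p(u^{\kappa^*})^{p-1}G[g]$ on $L^{q/(q-1)}_{-\alpha}$ (the same Fubini computation the paper uses inside the proof of Proposition~\ref{cpt}, cf.\ \eqref{Ta'}) and describe $\Im(I-T)$ as the pre-annihilator of $\Ker(I-T^*)$. This is a legitimate and arguably more standard functional-analytic route; what the paper's version buys is that it only ever manipulates smooth compactly supported test functions and the weak formulation, avoiding any regularity discussion on the dual side.

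One step in your adjoint argument is softer than you present it: from $g=p(u^{\kappa^*})^{p-1}G[g]$ with $g\in L^{q/(q-1)}_{-\alpha}$ you set $\phi:=G[g]$ and declare $\phi\in\Ker(I-T)=\operatorname{span}\{\psi^{\kappa^*}\}$, but $\Ker(I-T)$ lives in $L^q_\alpha$ and a priori you only know $\phi\in L^{q/(q-1)}_{-\alpha}$ from \eqref{dual}; the bootstrap of Proposition~\ref{Dstrategy'} does not obviously start from that pair of exponents (condition \eqref{s0b0} with $1/r_0=1-1/q$ forces $p<2$). The clean fix is to bypass the reverse inclusion entirely: Riesz--Schauder theory gives $\dim\Ker(I-T^*)=\dim\Ker(I-T)=1$, and you have already exhibited the nonzero element $p(u^{\kappa^*})^{p-1}\psi^{\kappa^*}\in\Ker(I-T^*)$ (it is in $L^{q/(q-1)}_{-\alpha}$ by the H\"older argument you cite, and it is fixed by $T^*$ because of \eqref{psiIE}), so $\Ker(I-T^*)=\operatorname{span}\{p(u^{\kappa^*})^{p-1}\psi^{\kappa^*}\}$ follows without analyzing a general $g$. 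With that adjustment your argument is complete.
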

\begin{proof}
We first prove \eqref{l*ker}. Let $h\in\Ker(\Phi_v(u^{\kappa^*},\kappa^*))$. Applying the same argument as in the proof of Lemma~\ref{l*=1}, we observe that $h\in W^{1,2}_0(\R^N_+)$ and $h$ is a weak solution to equation
\[
-\Delta h+h=p(u^{\kappa^*})^{p-1}h\jn\R^N_+.
\]
Thus $h$ is an eigenfunction of eigenvalue problem \eqref{E} with $\kappa=\kappa^*$ corresponding to the eigenvalue $\lambda^{\kappa^*}=1$, and hence $h\in\operatorname{span}\{\psi^{\kappa^*}\}$ by \eqref{psik}. These imply that 
\[
\Ker(\Phi_v(u^{\kappa^*},\kappa^*))\subset\operatorname{span}\{\psi^{\kappa^*}\}.
\]
Conversely, since $\psi^{\kappa^*}\in L^q_\alpha$ and $\psi^{\kappa^*}=G[p(u^\kappa)^{p-1}\psi^{\kappa^*}]$ by \eqref{psiIE} and \eqref{psiinteg}, $\psi^{\kappa^*}$ belongs to $\Ker(\Phi_v(u^{\kappa^*},\kappa^*))$. Thus \eqref{l*ker} follows.

We next prove \eqref{l*im}. Since
\[
\frac{q-p}{q}-p\alpha=1-p\left(\frac{1}{q}+\alpha\right)>1-p\cdot\frac{2}{p}=-1
\]
by \eqref{qalpha}, it follows from \eqref{psiinteg} that  $\psi^{\kappa^*}\in L^{q/(q-p)}_{-p\alpha}$. This together with $(u^{\kappa^*})^{p-1}\in L^{q/(p-1)}_{(p-1)\alpha}$ implies that $p(u^{\kappa^*})^{p-1}\psi^{\kappa^*}\in L^{q/(q-1)}_{-\alpha}=(L^q_\alpha)^*,$ and thus
\begin{equation}\label{Z}
Z:=\left\{f\in L^q_\alpha: \integ{\R^N_+}p(u^{\kappa^*})^{p-1}\psi^{\kappa^*}fdx=0\right\}
\end{equation}
is a well-defined closed subspace of $L^q_\alpha$ and
\begin{equation}\label{RHScodim}
\operatorname{codim} Z=1.
\end{equation}

On the other hand, by the same argument as in the proof of Lemma~\ref{l*=1}, $h\mapsto G[p(u^{\kappa^*})^{p-1}h]$ is a compact operator from $L^q_\alpha$ to itself. It follows from the Fredholm alternative theorem and \eqref{l*ker} that
\begin{equation}\label{codim}
\operatorname{codim}\left(\Im(\Phi_v(u^{\kappa^*},\kappa^*))\right)=\dim\left(\Ker(\Phi_v(u^{\kappa^*},\kappa^*))\right)=1.
\end{equation}
We next prove that
\begin{equation}\label{l*im'}
\Phi_v(u^{\kappa^*},\kappa^*)(C^\infty_c(\R^N_+))\subset Z.
\end{equation}
Let $f\in\Phi_v(u^{\kappa^*},\kappa^*)(C^\infty_c(\R^N_+))$ and $h\in C^\infty_c(\R^N_+)$ be such that $\Phi_v(u^{\kappa^*},\kappa^*)h=f$, that is,
\[
h-f=G[p(u^{\kappa^*})^{p-1}h].
\]
Since $h\in L^r_\beta$ for any $(r,\beta)\in D_*$, by the same argument as in the proof of Proposition~\ref{weaksol}, we deduce that $v:=h-f\in W^{1,2}_0(\R^N_+)$ and $v$ is a weak solution to equation $-\Delta v+v=p(u^{\kappa^*})^{p-1}h$~in~$\R^N_+$. Using this and \eqref{psik}, we obtain
\[
\integ{\R^N_+}p(u^{\kappa^*})^{p-1}h\psi^{\kappa^*}dx=\integ{\R^N_+}(\nabla\psi^{\kappa^*}\cdot\nabla v+\psi^{\kappa^*}v)dx=\integ{\R^N_+}p(u^{\kappa^*})^{p-1}\psi^{\kappa^*}vdx,
\]
and thus
\[
\integ{\R^N_+}p(u^{\kappa^*})^{p-1}\psi^{\kappa^*}fdx=0.
\]
This proves \eqref{l*im'}.

Since $C^\infty_c(\R^N_+)$ is a dense subspace of $L^q_\alpha$, $\Phi_v(u^{\kappa^*},\kappa^*)(C^\infty_c(\R^N_+))$ is a dense subspace of $\Im(\Phi_v(u^{\kappa^*},\kappa^*))$. By taking closures of both sides of \eqref{l*im'} in $L^q_\alpha$, we obtain
\[
\Im(\Phi_v(u^{\kappa^*},\kappa^*))\subset Z.
\]
This together with \eqref{RHScodim} and \eqref{codim} implies \eqref{l*im}, the proof of Lemma~\ref{1dim} is complete.
\end{proof}
Now we are ready to prove Theorem~\ref{Thm2}.
\begin{proof}[Proof of Theorem~{\rm\ref{Thm2}}]
We first complete the proof of assertions (i) and (ii). By Theorem~\ref{Thm1} and Proposition~\ref{l*existence}, it suffices to prove that $u^{\kappa^*}$ is the unique $L^q_\alpha$-solution to problem~\eqref{P} with $\kappa=\kappa^*$. Let $\tilde{u}$ be an $L^q_\alpha$-solution to problem~\eqref{P} with $\kappa=\kappa^*$. It follows from the minimality of $u^{\kappa^*}$ that $u^{\kappa^*}\le\tilde{u}$ a.e. in $\R^N_+$. Furthermore, by the same argument as in the proof of Proposition~\ref{weaksol}, it follows that $\tilde{w}:=\tilde{u}-U^{\kappa^*}_{j_*+1}\in W^{1,2}_0(\R^N_+)$ and $\tilde{w}$ is a weak solution to equation $-\Delta\tilde{w}+\tilde{w}=(\tilde{u})^p-(U^{\kappa^*}_{j_*})^p$~in~$\R^N_+$. 
Hence $z:=\tilde{u}-u^{\kappa^*}\in W^{1,2}_0(\R^N_+)$ and $z$ is a weak solution to equation
\[
-\Delta z+z=(\tilde{u})^p-(u^{\kappa^*})^p\jn\R^N_+.
\]
This together with Lemma~\ref{l*=1} implies that
\[
\integ{\R^N_+}p(u^{\kappa^*})^{p-1}z\psi^{\kappa^*}dx=\integ{\R^N_+}(\nabla z\cdot\nabla\psi^{\kappa^*}+z\psi^{\kappa^*})dx=\integ{\R^N_+}((\tilde{u})^p-(u^{\kappa^*})^p)\psi^{\kappa^*} dx.
\]
Since $s^p-t^p>ps^{p-1}(s-t)$ for any $s>t>0$, it follows that $\tilde{u}=u^{\kappa^*}$ a.e. in $\R^N_+$, and the proof of assertions (i) and (ii) is complete.

We finally prove assertion (iii). Let $Z$ be as in \eqref{Z}. Then $Z$ is a complement subspace of $\operatorname{span}\{\psi^{\kappa^*}\}$~in~$L^q_\alpha$. Since $\Phi_\kappa(u^{\kappa^*},\kappa^*)=P[\mu]>0$~in~$\R^N_+$,
\[
\integ{\R^N_+}p(u^{\kappa^*})^{p-1}\psi^{\kappa^*}\Phi_\kappa(u^{\kappa^*},\kappa^*)dx>0.
\]
It follows from \eqref{l*im} that $\Phi_\kappa(u^{\kappa^*},\kappa^*)\notin\Im\left(\Phi_v(u^{\kappa^*},\kappa^*)\right)$. This together with \eqref{l*ker} and \eqref{l*im} enables us to apply the following bifurcation theorem.
\begin{Proposition}\cite[Theorem 3.2]{CR1}
Let $X$, $Y$ be two Banach spaces, $U\subset X\times\R$ be an open set, and $F=F(u,\kappa):U\to Y$ is a $C^1$ map. Let $(u^*,\kappa^*)\in U$ be such that
\begin{gather*}
F(u^*,\kappa^*)=0,\quad \dim\left(\Ker (F_u(u^*,\kappa^*))\right)=\operatorname{codim} (\Im F_u(u^*,\kappa^*))=1,\\
F_\kappa(u^*,\kappa^*)\notin \Im (F_u(u^*,\kappa^*)).
\end{gather*}
Let $\psi\in X$ be such that $\Ker(F_u(u^*,\kappa^*))=\operatorname{span}\{\psi\}$, and $Z\subset X$ be a complemented subspace of $\Ker(F_u(u^*,\kappa^*))$. Then solutions $(u,\kappa)$ to equation $F(u,\kappa)=0$ near $(u^*,\kappa^*)$ form a $C^1$ curve
\[
(u(s),\kappa(s))=(u^*+s\psi+z(s),\kappa^*+\tau(s)),
\]
with $(z,\tau):(-\delta,\delta)\to Z\times\R$, $z(0)=0$,  $\tau(0)=0$, and $\tau'(0)=0$.
\end{Proposition}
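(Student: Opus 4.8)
The plan is to prove this by the classical Lyapunov--Schmidt reduction. Write $N:=\Ker(F_u(u^*,\kappa^*))=\operatorname{span}\{\psi\}$ and $R:=\Im(F_u(u^*,\kappa^*))$, a closed subspace of $Y$ of codimension one; since $F_\kappa(u^*,\kappa^*)\notin R$, we have the splitting $Y=R\oplus W$ with $W:=\operatorname{span}\{F_\kappa(u^*,\kappa^*)\}$, and I let $Q:Y\to Y$ be the bounded projection onto $R$ along $W$. From $X=N\oplus Z$ and the facts that $F_u(u^*,\kappa^*)$ kills $N$ and has range $R$, the restriction $L:=F_u(u^*,\kappa^*)|_Z:Z\to R$ is a bounded linear isomorphism. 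I then parametrize points near $(u^*,\kappa^*)$ by $u=u^*+s\psi+z$ ($s\in\R$, $z\in Z$) and $\kappa=\kappa^*+\tau$ ($\tau\in\R$), so that $F(u,\kappa)=0$ is equivalent to the pair of equations $QF(u^*+s\psi+z,\kappa^*+\tau)=0$ (the auxiliary equation) and $(I-Q)F(u^*+s\psi+z,\kappa^*+\tau)=0$ (the bifurcation equation).

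First I would solve the auxiliary equation for $z$. Setting $\Phi(z,s,\tau):=QF(u^*+s\psi+z,\kappa^*+\tau)\in R$, one has $\Phi(0,0,0)=0$ and $\Phi_z(0,0,0)=QF_u(u^*,\kappa^*)|_Z=L$, which is an isomorphism; hence the implicit function theorem provides a $C^1$ map $(s,\tau)\mapsto z(s,\tau)$, defined near $(0,0)$ with $z(0,0)=0$, that solves the auxiliary equation and, by the uniqueness part of the theorem, captures all of its nearby solutions.

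Next I would substitute $z=z(s,\tau)$ into the bifurcation equation. Its values lie in the one-dimensional space $W$, so $g(s,\tau):=(I-Q)F(u^*+s\psi+z(s,\tau),\kappa^*+\tau)$ may be treated as a scalar function via the basis $F_\kappa(u^*,\kappa^*)$ of $W$. Then $g(0,0)=0$, and differentiating at $(0,0)$: since $F_u(u^*,\kappa^*)\psi=0$ and $F_u(u^*,\kappa^*)z_s(0,0)\in R$, the map $I-Q$ annihilates $F_u(u^*,\kappa^*)(\psi+z_s(0,0))$, giving $g_s(0,0)=0$; whereas $F_u(u^*,\kappa^*)z_\tau(0,0)\in R$ while $(I-Q)F_\kappa(u^*,\kappa^*)=F_\kappa(u^*,\kappa^*)\neq 0$, giving $g_\tau(0,0)=F_\kappa(u^*,\kappa^*)\neq 0$. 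A final application of the implicit function theorem then yields a $C^1$ function $s\mapsto\tau(s)$ near $s=0$ with $\tau(0)=0$, again capturing all nearby solutions of $g=0$, and differentiating $g(s,\tau(s))\equiv 0$ at $s=0$ gives $\tau'(0)=-g_s(0,0)/g_\tau(0,0)=0$. Putting $z(s):=z(s,\tau(s))$ and $(u(s),\kappa(s)):=(u^*+s\psi+z(s),\kappa^*+\tau(s))$ produces the claimed $C^1$ curve with $z(0)=0$, $\tau(0)=0$, $\tau'(0)=0$; combining the two uniqueness statements shows that every solution of $F(u,\kappa)=0$ sufficiently close to $(u^*,\kappa^*)$ lies on this curve.

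The verifications that $L$ is an isomorphism and that $\Phi$ and $g$ are $C^1$ are routine. The one place requiring care is the computation of $g_s(0,0)$ and $g_\tau(0,0)$: the conclusion $\tau'(0)=0$ — i.e.\ a quadratic fold rather than a transcritical crossing — rests precisely on the two structural hypotheses $F_u(u^*,\kappa^*)\psi=0$ and $F_\kappa(u^*,\kappa^*)\notin\Im(F_u(u^*,\kappa^*))$, so the main obstacle is merely to keep the projections $Q$ and $I-Q$ straight so that these two facts are fed into the correct partial derivative.
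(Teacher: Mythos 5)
Your Lyapunov--Schmidt argument is correct: the splitting $Y=R\oplus\operatorname{span}\{F_\kappa(u^*,\kappa^*)\}$ is legitimate (the range $R$, having finite codimension, is closed, so the projection $Q$ is bounded), $L=F_u(u^*,\kappa^*)|_Z\colon Z\to R$ is indeed an isomorphism by the open mapping theorem, and your two applications of the implicit function theorem together with the derivative computations $g_s(0,0)=0$, $g_\tau(0,0)\neq 0$ correctly yield the $C^1$ fold curve with $\tau(0)=\tau'(0)=0$ and the local uniqueness of solutions on it. Note, however, that the paper does not prove this proposition at all --- it is quoted verbatim from Crandall--Rabinowitz \cite{CR1}*{Theorem 3.2} and used as a black box --- so there is no in-paper proof to compare against; what you have written is essentially the standard proof of that cited theorem. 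A small streamlining worth knowing: one can skip the projection $Q$ entirely by applying the implicit function theorem once to $(z,\tau)\mapsto F(u^*+s\psi+z,\kappa^*+\tau)$, whose derivative at the origin, $(z,\tau)\mapsto F_u(u^*,\kappa^*)z+\tau F_\kappa(u^*,\kappa^*)$, is an isomorphism of $Z\times\R$ onto $Y$ precisely because $F_u|_Z$ is an isomorphism onto $R$ and $F_\kappa(u^*,\kappa^*)\notin R$; the relation $\tau'(0)=0$ then follows by differentiating $F(u(s),\kappa(s))=0$ at $s=0$ and using $F_u\psi=0$ together with $F_\kappa\notin R$, exactly the two structural facts you isolated.
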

Using this theorem with $F=\Phi$ and $X=Y=L^q_\alpha$, we obtain a $C^1$ curve $(u(s),\kappa(s))=(u^{\kappa^*}+s\psi^{\kappa^*}+z(s),\kappa^*+\tau(s))$ with $(z,\tau):(-\delta,\delta)\to Z\times\R$ with small enough $\delta>0$, such that
\[
z(0)=0,\quad \tau(0)=0,\quad \tau'(0)=0,
\]
\begin{equation}\label{curve}
\Phi(u(s),\kappa(s))=0\quad\textrm{for any}\quad s\in(-\delta,\delta).
\end{equation}
Here we see that $u(s_1)\neq u(s_2)$ for $s_1,s_2\in(-\delta,\delta)$ with $s_1\neq s_2$. Indeed, since $u(s_1)=u^{\kappa^*}+s_1\psi^{\kappa^*}+z(s_1)$, $u(s_2)=u^{\kappa^*}+s_2\psi^{\kappa^*}+z(s_2)$, $z(s_1)$ and $z(s_2)$ belong a complemented subspace $Z$ of $\operatorname{span}\{\psi^{\kappa^*}\}$, it holds that $u(s_1)\neq u(s_2)$.

It follows from assertion (ii) in Theorem~\ref{Thm1} that $\kappa(s)\le\kappa^*$ for any $s\in(-\delta,\delta)$. Furthermore, since $u^{\kappa^*}$ is the unique $L^q_\alpha$-solution to problem~\eqref{P} with $\kappa=\kappa^*$ and $\kappa(0)=\kappa^*$, $\kappa(s)<\kappa^*$ for any $s\in(-\delta,\delta)\setminus\{0\}$. By the intermediate value theorem, for sufficiently small $\varepsilon>0$, there are two numbers $s_1\in(-\delta,0)$ and $s_2\in(0,\delta)$ such that $\kappa(s_1)=\kappa(s_2)=\kappa^*-\varepsilon$. This together with \eqref{curve} and $s_1<0<s_2$ implies that $u(s_1)$ and $u(s_2)$ are two distinct $L^q_\alpha$-solutions to problem~\eqref{P} with $\kappa=\kappa^*-\varepsilon$. This proves assertion (iii), and the proof of Theorem~\ref{Thm2} is complete.
\end{proof}
\appendix
\def\thesection{Appendix \Alph{section}}
\section{On the assumption \eqref{pmu}}\label{pmucond}
\def\thesection{\Alph{section}}
In this section, we give some notes on the assumption \eqref{pmu}. We first summarize properties of the integral kernel $P(x,z)$.
\begin{Lemma}\label{Pprop}
\begin{enumerate}[label={\rm(\roman*)}]
\item Let $x,y\in\R^N_+$ and $z\in\R^{N-1}$. Then
\begin{equation}\label{Psemig}
P(x+y,z)=\int_{\R^{N-1}}P(x,z-\zeta)P(y,\zeta)d\zeta.
\end{equation}
\item Let $x\in\R^N_+$. Then
\begin{equation}\label{Pint}
\int_{\R^{N-1}}P(x,z)dz=e^{-x_N}.
\end{equation}
\item Let
\[
\tilde{P}(x,z):=\Gamma\left(\frac{N}{2}\right)\pi^{-N/2}x_N|(x'-z,x_N)|^{-N}\for x\in\R^N_+,\quad z\in\R^{N-1}
\]
be the Poisson kernel for $-\Delta$. Then
\begin{equation}\label{Pcomp1}
P(x,z)\le C\tilde{P}(x,z)\FA x\in\R^N_+,\quad z\in\R^{N-1}.
\end{equation}
\item Let $\eta\in C^\infty_c(\R^{N-1})$ be such that $0\le\eta(z)\le 1$ for $z\in\R^{N-1}$, $\eta(z)=0$ for $|z|\ge 1$, and $\eta(z)=1$ for $|z|\le 1/2$. Set
\[
\overline{P}(x,z):=x_N^{1-N}\eta\left(\frac{x'-z}{x_N}\right).
\]
Then
\begin{equation}\label{Pcomp2}
\overline{P}(x,z)\le CP(x,z)\FA x\in\R^N_+\with x_N<1,\quad z\in\R^{N-1}.
\end{equation}
\end{enumerate}
\end{Lemma}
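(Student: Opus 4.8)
The plan is to reduce all four assertions to a single closed-form expression for $P(x,z)$ obtained by differentiating the Dirichlet--Green kernel $G$, together with classical asymptotics of the modified Bessel function $K_{N/2}$.

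First I would derive the explicit formula. Writing $y=(z,s)$ and using $|x-y|^2=|x'-z|^2+(x_N-s)^2$ and $|x^*-y|^2=|x'-z|^2+(x_N+s)^2$, a chain-rule computation gives $\left.\partial_s|x-y|\right|_{s=0}=-x_N/\rho$ and $\left.\partial_s|x^*-y|\right|_{s=0}=x_N/\rho$, where $\rho:=|(x'-z,x_N)|$, so that $P(x,z)=-2x_N\rho^{-1}E'(\rho)$. Invoking the Bessel identity $\frac{d}{dr}\big(r^{-\nu}K_\nu(r)\big)=-r^{-\nu}K_{\nu+1}(r)$ with $\nu=(N-2)/2$ then yields $E'(r)=-(2\pi)^{-N/2}r^{(2-N)/2}K_{N/2}(r)$, and hence
\[
P(x,z)=2(2\pi)^{-N/2}\,x_N\,\rho^{-N/2}K_{N/2}(\rho),\qquad \rho=|(x'-z,x_N)|,
\]
which in particular is positive (consistent with the monotonicity of $E$).

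For (iii) and (iv) I would use the standard asymptotics $K_{N/2}(r)\sim 2^{N/2-1}\Gamma(N/2)\,r^{-N/2}$ as $r\to0^+$ and $K_{N/2}(r)\sim\sqrt{\pi/(2r)}\,e^{-r}$ as $r\to\infty$, together with the positivity and continuity of $K_{N/2}$ on $(0,\infty)$. These give $K_{N/2}(r)\le Cr^{-N/2}$ for every $r>0$ (small $r$ from the first asymptotics, large $r$ from the exponential decay, intermediate $r$ by compactness), so that, comparing with $\tilde P(x,z)=\Gamma(N/2)\pi^{-N/2}x_N\rho^{-N}$, assertion (iii) follows. They also give a matching lower bound $K_{N/2}(r)\ge c r^{-N/2}$ on each interval $(0,R]$; since $x_N<1$ forces $\rho\in(0,\sqrt2)$ whenever $\eta((x'-z)/x_N)\neq0$, this yields $P(x,z)\ge c' x_N\rho^{-N}\ge c'' x_N^{1-N}\ge c''\,\overline P(x,z)$ on that region, while $\overline P=0\le C P$ outside the support of the cutoff; this proves (iv).

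For (i) and (ii) I would argue by uniqueness for the linear Dirichlet problem. For regular boundary data $g$, the function $P[g]$ solves $-\Delta u+u=0$ in $\R^N_+$, $u=g$ on $\partial\R^N_+$, and $u(x',x_N)\to0$ as $x_N\to\infty$ (this is the Poisson-kernel counterpart of \eqref{GRepr}), and such decaying solutions are unique by the maximum principle. Since $e^{-x_N}$ solves this problem with $g\equiv1$ while $\int_{\R^{N-1}}P(x,z)\,dz$ equals $P[\mu]$ with $\mu$ the $(N-1)$-dimensional Lebesgue measure, which solves it too, uniqueness gives \eqref{Pint}. For \eqref{Psemig}, I would fix $(y',t)\in\R^N_+$, set $u=P[\mu]$, and note that $(x',s)\mapsto u(x'+y',s+t)$ solves the homogeneous equation in $\R^N_+$ with boundary trace $u(\cdot+y',t)$; the Poisson representation then gives $u(x'+y',s+t)=\int_{\R^{N-1}}P((x',s),\zeta)\,u(\zeta+y',t)\,d\zeta$, and expanding both sides in $P$ and $\mu$, applying Fubini, using the tangential translation invariance of $P$, and letting $\mu$ range over a dense class, I recover \eqref{Psemig} after the substitution $\zeta\mapsto y'-\zeta$. (Alternatively, a Fourier transform in $x'$ shows that $P(\cdot\,,0)$ has Fourier transform $\xi\mapsto e^{-x_N\sqrt{1+|\xi|^2}}$, from which (i) is the semigroup identity and (ii) is evaluation at $\xi=0$.) The only genuine difficulty is the two-sided comparison in the previous step: one must verify that the exponent in both bounds for $\rho^{-N/2}K_{N/2}(\rho)$ is exactly $\rho^{-N}$ — so that $P$ is genuinely comparable to the classical Poisson kernel near the boundary — uniformly over all $N\ge1$, including the degenerate cases $N=1$ (where $\rho=x_N$) and $N=2$; this is where the precise Bessel asymptotics, rather than crude bounds, are needed. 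Everything else is routine.
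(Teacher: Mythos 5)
Your proposal is correct and follows essentially the same route as the paper: the paper likewise reduces (i)--(ii) to the representation formula \eqref{GRepr} and proves (iii)--(iv) from the two-sided asymptotics of $|E'(r)|$ (equivalently, of $r^{-N/2}K_{N/2}(r)$) via the identity $P(x,z)=-2x_N|E'(\rho)|\,(-1)/\rho$ with $\rho=|(x'-z,x_N)|$, together with the observation that the cutoff in $\overline{P}$ forces $\rho^2<2$ when $x_N<1$. Your explicit Bessel computation and the uniqueness/Fourier argument for \eqref{Psemig} and \eqref{Pint} simply supply details the paper leaves implicit.
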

\begin{proof}
Equalities \eqref{Psemig} and \eqref{Pint} are consequences of equality \eqref{GRepr}.

We next prove \eqref{Pcomp1}. It follows from the fact
\[
|E'(r)|\asymp r^{1-N}\as r\to+0,\quad |E'(r)|\asymp r^{-(N-1)/2}e^{-r}\as r\to+\infty,
\]
that
\begin{equation}\label{Pequiv}
P(x,z)\approx\begin{cases}
x_N(|x'-z|^2+x_N^2)^{-N/2}&\jf |x'-z|^2+x_N^2\le 2,\\
x_N(|x'-z|^2+x_N^2)^{-(N+1)/4}\exp(-(|x'-z|^2+x_N^2)^{1/2})&\jf |x'-z|^2+x_N^2>2.
\end{cases}
\end{equation}
This together with the definition of $\tilde{P}(x,z)$ implies \eqref{Pcomp1}.

We finally prove \eqref{Pcomp2}. Let $x\in\R^N_+$ be such that $x_N<1$, and $z\in\R^{N-1}$. If $|x'-z|<x_N$, then $|x'-z|^2+x_N^2<2$. This together with \eqref{Pequiv} implies that
\[
\overline{P}(x,z)\le x_N^{1-N}\le Cx_N(|x'-z|^2+x_N^2)^{-N/2}\le CP(x,z).
\]
On the other hand, if $|x'-z|\ge x_N$, then $\overline{P}(x,z)=0$. Thus \eqref{Pcomp2} follows.
\end{proof}
Now we give the following Besov space characterization of $\mu$ with $P[\mu]\in L^q_\alpha$.
\begin{Proposition}\label{Bsqq}
Let $q\in(1,\infty)$, $\alpha\ge 0$, and $\mu$ be a nonnegative Radon measure on $\R^{N-1}$. Then the following conditions are equivalent.
\begin{enumerate}[label={\rm(\roman*)}]
\item $P[\mu]\in L^q_\alpha$.
\item \begin{equation}\label{stripe}
\int_{\R^{N-1}\times(0,1)}P[\mu](x)^qx_N^{q\alpha} dx<\infty.
\end{equation}
\item $\mu\in B^{-1/q-\alpha}_{q,q}(\R^{N-1})$.
\end{enumerate}
\end{Proposition}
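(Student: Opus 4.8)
The plan is to prove the cycle of implications (i) $\Rightarrow$ (ii) $\Rightarrow$ (iii) $\Rightarrow$ (i). The first implication is immediate, and the other two reduce, via the kernel comparisons of Lemma~\ref{Pprop}, to classical characterizations of the Besov space $B^{-1/q-\alpha}_{q,q}(\R^{N-1})$. As a preliminary I would record the following consequences of parts (i)--(ii) of Lemma~\ref{Pprop}. Writing $g_s(w):=P((0,s),w)$ for $s>0$ and $w\in\R^{N-1}$, tangential translation (and rotation) invariance of $G$ gives $P((x',s),z)=g_s(x'-z)$ with $g_s$ radial, so that $P[\mu](\cdot,s)=g_s*\mu$ on $\R^{N-1}$; then \eqref{Psemig} becomes the semigroup law $g_{s+t}=g_s*g_t$, \eqref{Pint} reads $\|g_s\|_{L^1(\R^{N-1})}=e^{-s}$, and therefore $P[\mu](\cdot,s+t)=g_s*\bigl(P[\mu](\cdot,t)\bigr)$, whence Young's inequality yields $\|P[\mu](\cdot,s+t)\|_{L^q(\R^{N-1})}\le e^{-s}\|P[\mu](\cdot,t)\|_{L^q(\R^{N-1})}$. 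I will also use Tonelli's theorem to rewrite, for $a\in(0,\infty]$, $\integ{\R^{N-1}\times(0,a)}P[\mu](x)^qx_N^{q\alpha}\,dx=\integ{(0,a)}t^{q\alpha}\|P[\mu](\cdot,t)\|_{L^q(\R^{N-1})}^q\,dt$, so that the $x_N<1$ part of $\|P[\mu]\|_{L^q_\alpha}^q$ is exactly the integral in \eqref{stripe}.

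For (i) $\Rightarrow$ (ii) it suffices to note that $h(x_N)=x_N$ for $x_N<1$, so the integral in \eqref{stripe} is dominated by $\|P[\mu]\|_{L^q_\alpha}^q<\infty$. For (iii) $\Rightarrow$ (i) I would set $s:=1/q+\alpha>0$ (so $sq-1=q\alpha$) and invoke the classical description of Besov spaces of negative smoothness by the harmonic (Poisson) extension: $\mu\in B^{-s}_{q,q}(\R^{N-1})$ is equivalent to $\tilde{P}_1\mu\in L^q(\R^{N-1})$ together with $\integ{(0,1)}t^{q\alpha}\|\tilde{P}_t\mu\|_{L^q(\R^{N-1})}^q\,dt<\infty$, where $\tilde{P}_t\mu(x'):=\int_{\R^{N-1}}\tilde{P}((x',t),z)\,d\mu(z)$ (see e.g. the monographs of Stein, or of Triebel). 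By the pointwise bound \eqref{Pcomp1}, $0\le P[\mu](\cdot,t)\le C\tilde{P}_t\mu$, so \eqref{stripe} follows; in particular some $t_0\in(0,1)$ has $\|P[\mu](\cdot,t_0)\|_{L^q(\R^{N-1})}<\infty$, and the Young-type estimate above then gives $\integ{[1,\infty)}\|P[\mu](\cdot,x_N)\|_{L^q(\R^{N-1})}^q\,dx_N\le\|P[\mu](\cdot,t_0)\|_{L^q(\R^{N-1})}^q\integ{[1,\infty)}e^{-q(x_N-t_0)}\,dx_N<\infty$. Adding the two ranges of $x_N$ yields $P[\mu]\in L^q_\alpha$.

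For (ii) $\Rightarrow$ (iii), with $\eta$ as in Lemma~\ref{Pprop}(iv) I would set $\eta_t(w):=t^{1-N}\eta(w/t)$, so that $\int_{\R^{N-1}}\overline{P}((x',t),z)\,d\mu(z)=(\eta_t*\mu)(x')$ for $0<t<1$; here $\eta_t$ is a nonnegative compactly supported mollifier with $\int_{\R^{N-1}}\eta_t=\int_{\R^{N-1}}\eta>0$. The bound \eqref{Pcomp2} gives $\eta_t*\mu\le C\,P[\mu](\cdot,t)$ on $\R^{N-1}$ for $0<t<1$, hence $\integ{(0,1)}t^{q\alpha}\|\eta_t*\mu\|_{L^q(\R^{N-1})}^q\,dt<\infty$ by \eqref{stripe}; choosing $t_0\in(0,1/2)$ with $\|P[\mu](\cdot,t_0)\|_{L^q(\R^{N-1})}<\infty$, the Young-type estimate bounds $\sup_{1/2\le t<1}\|\eta_t*\mu\|_{L^q(\R^{N-1})}$, and in particular $\|\eta_{1/2}*\mu\|_{L^q(\R^{N-1})}<\infty$. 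These two facts are precisely the hypotheses of the characterization of Besov spaces of negative smoothness by continuous local means (which, at negative smoothness, imposes no cancellation conditions on the mollifier; see Rychkov, and Triebel's monographs), so $\mu\in B^{-1/q-\alpha}_{q,q}(\R^{N-1})$. Beforehand one checks that $\mu$ is a tempered distribution, as is needed for the membership to make sense: $\|\eta_{1/2}*\mu\|_{L^q(\R^{N-1})}<\infty$ forces $\sup_{x'\in\R^{N-1}}\mu(B(x',1/4))<\infty$ and hence $\mu(B(0,R))\le CR^{N-1}$.

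The hard part is the passage, inside (ii) $\Leftrightarrow$ (iii), between the kernel $P$ for $-\Delta+1$ on the half space and a standard mollifier family on $\R^{N-1}$, which is exactly what \eqref{Pcomp1}, \eqref{Pcomp2} and the asymptotics \eqref{Pequiv} are tailored for. The point that requires care is that in the direction (ii) $\Rightarrow$ (iii) one must use the compactly supported $\overline{P}$ rather than the harmonic Poisson kernel $\tilde{P}$: the latter need not satisfy $\tilde{P}_t\mu\in L^q(\R^{N-1})$ a priori, so only the global, compactly supported comparison \eqref{Pcomp2} is available there, together with the version of the local-means theorem valid at negative smoothness. The behavior for $x_N\ge1$, by contrast, is routine once the exponential $L^1$-decay $\|g_s\|_{L^1(\R^{N-1})}=e^{-s}$ from \eqref{Pint} and the semigroup identity \eqref{Psemig} are in hand.
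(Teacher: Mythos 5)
Your proof is correct and follows essentially the same route as the paper: the trivial implication (i)$\Rightarrow$(ii), the semigroup identity \eqref{Psemig} with the $L^1$ mass \eqref{Pint} and Young's inequality to control the region $x_N\ge 1$, and the sandwich $C\overline{P}[\mu]\le P[\mu]\le C\tilde{P}[\mu]$ from \eqref{Pcomp1}--\eqref{Pcomp2} to transfer \eqref{stripe} to and from the Besov condition. The only difference is organizational: the paper cites a single theorem of Triebel giving the three-way equivalence of the $\tilde{P}$-integral, the $\overline{P}$-integral, and membership in $B^{-1/q-\alpha}_{q,q}$, whereas you split this into the harmonic-extension and local-means characterizations and correctly note which comparison kernel is needed in which direction.
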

\begin{proof}
(i) $\implies$ (ii) is clear. Conversely, if $\mu$ satisfies (ii), we find $\tau\in(0,1)$ such that
\[
\int_{\R^{N-1}}P[\mu](z,\tau)^qdz<\infty.
\]
This together with \eqref{Psemig} and \eqref{Pint} implies that
\begin{multline*}
\left(\int_{\R^{N-1}}P[\mu](z,\tau+t)^qdz\right)^{1/q}\le \left(\int_{\R^{N-1}}\left(\int_{\R^{N-1}}P(z-\zeta,t)P[\mu](\zeta,\tau)d\zeta\right)^qdz\right)^{1/q}\\
\le \left(\int_{\R^{N-1}}P(\zeta,t)d\zeta\right)\left(\int_{\R^{N-1}}P[\mu](z,\tau)^qdz\right)^{1/q}=e^{-t}\left(\int_{\R^{N-1}}P[\mu](z,\tau)^qdz\right)^{1/q}
\end{multline*}
for any $t>0$. This together with \eqref{stripe} implies that $P[\mu]\in L^q_\alpha$, and we obtain (ii) $\implies$ (i).

We next prove that (ii) $\iff$ (iii). Let $\tilde{P}$ and $\overline{P}$ be as in Lemma \ref{Pprop}. Set
\[
\tilde{P}[\mu](x):=\int_{\R^{N-1}}\tilde{P}(x,z)d\mu(z),\quad\overline{P}[\mu](x):=\int_{\R^{N-1}}\overline{P}[\mu](x,z)d\mu(z),
\]
for $x\in\R^N_+$. It follows from \eqref{Pcomp1} and \eqref{Pcomp2} that
\[
C\overline{P}[\mu]\le P[\mu]\le C\tilde{P}[\mu]\jn\R^N_+.
\]
Furthermore, it follows from \cite{T}*{Theorem 3} that
\[
\int_{\R^{N-1}\times(0,1)}\tilde{P}[\mu]^qx_N^{q\alpha} dx<\infty\iff\int_{\R^{N-1}\times(0,1)}\overline{P}[\mu]^qx_N^{q\alpha} dx<\infty\iff \mu\in B^{-1/q-\alpha}_{q,q}.
\]
Thus (ii) $\iff$ (iii) holds, and the proof of Proposition~\ref{Bsqq} is complete.
\end{proof}
As an immediate consequence of Proposition~\ref{Bsqq}, we obtain the following condition equivalent to the assumption in Theorem~\ref{Thm1}.
\begin{Proposition}\label{Equivassump}
Let $p>1$ and $\mu$ be a nonnegative Radon measure on $\R^{N-1}$. The following conditions are equivalent.
\begin{enumerate}[label={\rm(\roman*)}]
\item $P[\mu]\in L^q_\alpha$ for some $q\in(p,\infty)$ and $\alpha\ge 0$ satisfying \eqref{qalpha}.
\item \begin{equation}\label{equiv}
\mu\in B^{-s}_{q,q}(\R^{N-1})\FS q>\max\left\{p,\frac{N}{2}(p-1)\right\},\quad s<\min\left\{\frac{2}{p},\frac{2}{p-1}-\frac{N-1}{q}\right\}.
\end{equation}
\end{enumerate}
\end{Proposition}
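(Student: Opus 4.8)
The plan is to reduce the statement, via Proposition~\ref{Bsqq}, to a purely arithmetic comparison of the two parameter ranges, and then to bridge the gap between them using the standard Sobolev-type embedding between Besov spaces on $\R^{N-1}$.

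First I would rewrite condition (i). By Proposition~\ref{Bsqq}, $P[\mu]\in L^q_\alpha$ is equivalent to $\mu\in B^{-s}_{q,q}(\R^{N-1})$ with $s:=\tfrac1q+\alpha$, and the requirements $\alpha\ge 0$ together with \eqref{qalpha} translate exactly into
\[
\frac1q\le s<\min\left\{\frac2p,\ \frac2{p-1}-\frac{N-1}q\right\},\qquad q>p,
\]
where $q>\tfrac N2(p-1)$ is automatic since $\tfrac Nq\le\tfrac Nq+\alpha<\tfrac2{p-1}$. Thus (i) is equivalent to the existence of $q,s$ in this range with $\mu\in B^{-s}_{q,q}$, and the implication (i)$\implies$(ii) is immediate because this range is contained in the one appearing in (ii).

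For (ii)$\implies$(i) the only thing that has to be produced is the extra constraint $s\ge\tfrac1q$. I would first note that $\min\{\tfrac2p,\ \tfrac2{p-1}-\tfrac{N-1}q\}>0$ (the second term is positive since $q>\tfrac N2(p-1)\ge\tfrac{N-1}2(p-1)$), so by the elementary inclusion $B^{-s}_{q,q}\hookrightarrow B^{-s''}_{q,q}$ for $s''\ge s$ we may assume $0<s<\min\{\tfrac2p,\ \tfrac2{p-1}-\tfrac{N-1}q\}$. Then, for $t\in(0,\tfrac1q]$, the Sobolev embedding $B^{-s}_{q,q}(\R^{N-1})\hookrightarrow B^{-s'}_{q',q'}(\R^{N-1})$ holds with $q':=1/t\ge q$ and $s':=s+(N-1)(\tfrac1q-t)$, because then $s'+\tfrac{N-1}{q'}=s+\tfrac{N-1}q$, i.e. the differential dimensions agree. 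Setting $\alpha':=s'-t$, the pair $(q',\alpha')$ satisfies $\alpha'\ge 0$ precisely when $t\le t_0:=\tfrac1N\bigl(s+\tfrac{N-1}q\bigr)$, it satisfies $\tfrac1{q'}+\alpha'=s'<\tfrac2p$ precisely when $(N-1)(\tfrac1q-t)<\tfrac2p-s$, and its second condition in \eqref{qalpha}, after the $(N-1)t$ terms cancel, reads $s+\tfrac{N-1}q<\tfrac2{p-1}$, which is the standing hypothesis; also $q'\ge q>p$. Such a $t$ exists: $t_0>0$, and $\tfrac1q-\tfrac{2/p-s}{N-1}<t_0$, since clearing denominators this amounts to $\tfrac{N-1}q<\tfrac{2N}p-s$, which holds because $\tfrac{2N}p-s>\tfrac{2(N-1)}p\ge\tfrac{N-1}p>\tfrac{N-1}q$ by $s<\tfrac2p$ and $q>p$; hence the set of admissible $t$ (those in $(0,\tfrac1q]$ with $t\le t_0$ and $(N-1)(\tfrac1q-t)<\tfrac2p-s$) is nonempty. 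Fixing such a $t$ and applying Proposition~\ref{Bsqq} to $B^{-s'}_{q',q'}=B^{-1/q'-\alpha'}_{q',q'}$ gives $P[\mu]\in L^{q'}_{\alpha'}$, which is (i).

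The parameter bookkeeping and the verification of the displayed inequalities are routine; the only external input besides Proposition~\ref{Bsqq} is the standard Sobolev embedding between Besov spaces. The point requiring care is that when $s<\tfrac1q$ one must \emph{enlarge} $q$ (never shrink it), which forces $s'$ upward, and one must then check that $s'$ still respects the upper bound $\tfrac2p$; this is exactly where the hypotheses $q>p$ and $s<\tfrac2p$ are used, and the argument would break without them.
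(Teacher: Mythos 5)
Your proof is correct, and the direction (i)$\implies$(ii) coincides with the paper's. The converse is where you diverge. The paper's argument is purely one-dimensional in the parameters: it keeps $q$ fixed and simply raises $s$ to some $s'\in\bigl[\max\{s,1/q\},\,\min\{2/p,\,2/(p-1)-(N-1)/q\}\bigr)$, which is a nonempty interval precisely because the hypothesis $q>\max\{p,\tfrac N2(p-1)\}$ in (ii) forces $1/q<\min\{2/p,\,2/(p-1)-(N-1)/q\}$; the only tool needed is the monotone inclusion $B^{-s}_{q,q}\subset B^{-s'}_{q,q}$ for $s'\ge s$, which you already invoke in your preliminary reduction to $s>0$. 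You instead enlarge $q$ to $q'=1/t$ via the Sobolev embedding $B^{-s}_{q,q}\hookrightarrow B^{-s'}_{q',q'}$ at constant differential dimension and then verify that the new pair $(q',\alpha')$ with $\alpha'=s'-1/q'$ satisfies $\alpha'\ge0$ and \eqref{qalpha}; your bookkeeping for the admissible range of $t$ checks out (including the borderline $r_0\le r_1$ case of the embedding, which holds since $q\le q'$). So your route is valid, but it imports an extra external ingredient (the Besov--Sobolev embedding) that the paper does not need for this proposition, and your closing remark that ``one must enlarge $q$'' is not accurate: the constraint $s\ge 1/q$ can be met without touching $q$ at all, exactly because (ii) already guarantees strict room between $1/q$ and the upper bound. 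What your approach buys is flexibility one does not actually need here; what the paper's buys is brevity and independence from Besov embedding theorems.
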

\begin{proof}
Letting $s=1/q+\alpha$, we observe that
\[
\alpha\ge 0\iff s\ge\frac{1}{q},\quad\eqref{qalpha}\iff s<\min\left\{\frac{2}{p},\frac{2}{p-1}-\frac{N-1}{q}\right\}.
\]
This together with Lemma~\ref{Bsqq} implies that (i) is equivalent to $\mu\in B^{-s}_{q,q}(\R^{N-1})$ for some $q>p$ and $s$ satisfying
\begin{equation}\label{pqs}
\frac{1}{q}\le s<\min\left\{\frac{2}{p},\frac{2}{p-1}-\frac{N-1}{q}\right\}.
\end{equation}
Since
\begin{equation}\label{pqs'}
\eqref{pqs}\implies \frac{1}{q}<\min\left\{\frac{2}{p},\frac{2}{p-1}-\frac{N-1}{q}\right\}\iff q>\max\left\{\frac{p}{2},\frac{N}{2}(p-1)\right\},
\end{equation}
(i) $\implies$ (ii) holds. Conversely, assume (ii). By \eqref{pqs'}, we find $s'\ge s$ such that
\[
\frac{1}{q}\le s'<\min\left\{\frac{2}{p},\frac{2}{p-1}-\frac{N-1}{q}\right\},
\]
and see that $\mu\in B^{-s'}_{q,q}(\R^{N-1})$. Thus (ii) $\implies$ (i) holds.
\end{proof}
\begin{Remark}\label{muassump}
\begin{enumerate}[label=\rm{(\roman*)}]
\item Assume that $1<p<p_S$. Then the embedding $H^{1/2}(\R^{N-1})\subset B^0_{q,q}(\R^{N-1})$ holds with $q=2(N-1)/(N-2)$. Since $(N-2)/2<2/(p-1)$ by $1<p<p_S$, the inequalities
\[
0<\frac{2}{p},\quad 0<\frac{2}{p-1}-\frac{N-1}{q}
\]
hold. Thus $\mu\in H^{1/2}(\R^{N-1})$ satisfies the assumption in Theorem~\ref{Thm1}.
\item Due to the embedding $C_0(\R^{N-1})\subset B^s_{r,r}(\R^{N-1})$ for $s>0$ and $r>1$ with $s>(N-1)/r$, every bounded Radon measure $\mu$~on~$\R^{N-1}$ belongs to $B^{-s}_{q,q}(\R^{N-1})$ for $s>0$ and $q>1$ with $s>(N-1)(1-1/q)$. Assume that
\[
1<p<\frac{N+1}{N-1}.
\]
Then the inequalities
\[
(N-1)\left(1-\frac{1}{p}\right)<\frac{2}{p}<\frac{2}{p-1}-\frac{N-1}{p},\quad p<\frac{N}{2}(p-1)
\]
hold. It follows that $q$ slightly larger than $p$ and $s$ slightly larger than $(N-1)(1-1/q)$ satisfy the condition \eqref{equiv}. Thus every bounded Radon measure $\mu$~on~$\R^{N-1}$ satisfies the assumption in Theorem~\ref{Thm1}.
\end{enumerate}
\end{Remark}
\def\thesection{Appendix \Alph{section}}
\section{Proofs of Propositions \ref{Glaa} and \ref{cpt}}\label{pLaG}
\def\thesection{\Alph{section}}
In this appendix, we prove the propositions given in Section~\ref{LaG}.
\subsection{Proof of Proposition~\ref{Glaa}}
We first prove Proposition~\ref{Glaa}, the integral inequality of the integral kernel $G$ in the spaces $L^q_\alpha$.
\begin{Lemma}\label{Gest}
\[
G(x,y)\le\min\left\{E(|x-y|),4x_Ny_N\frac{|E'(|x-y|)|}{|x-y|}\right\}\FA x,y\in\R^N_+\with x\neq y.
\]
\end{Lemma}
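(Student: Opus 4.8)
The plan is to establish the two bounds on $G(x,y)=E(|x-y|)-E(|x^*-y|)$ separately. The first bound, $G(x,y)\le E(|x-y|)$, is immediate: since $x,y\in\R^N_+$ implies $|x^*-y|>0$, and $E>0$ everywhere on $(0,\infty)$, we have $E(|x^*-y|)>0$, so $G(x,y)=E(|x-y|)-E(|x^*-y|)<E(|x-y|)$. (The strictness is harmless; $\le$ suffices.)

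For the second bound, the idea is to view $G(x,y)$ as the increment of the radial function $E$ between the radii $|x-y|$ and $|x^*-y|$, and to control this increment by the mean value theorem together with an elementary estimate comparing those two radii. First I would record the geometric identity $|x^*-y|^2-|x-y|^2=4x_Ny_N$, which follows by expanding both sides using $x^*=(x',-x_N)$, $y=(y',y_N)$. Since $|x^*-y|\ge|x-y|$, write $a:=|x-y|$ and $b:=|x^*-y|$, so $b\ge a>0$ and $b^2-a^2=4x_Ny_N$. By the mean value theorem applied to the $C^1$ function $E$ on $[a,b]$, there is $\xi\in[a,b]$ with
\[
G(x,y)=E(a)-E(b)=-E'(\xi)(b-a)=|E'(\xi)|(b-a),
\]
using that $E$ is (strictly) decreasing, so $E'<0$. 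Now $b-a=\dfrac{b^2-a^2}{b+a}=\dfrac{4x_Ny_N}{a+b}\le\dfrac{4x_Ny_N}{a}=\dfrac{4x_Ny_N}{|x-y|}$. It remains to replace $|E'(\xi)|$ by $|E'(a)|=|E'(|x-y|)|$; this is where monotonicity of $|E'|$ enters.

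The main obstacle — really the only nontrivial point — is justifying that $|E'(r)|$ is nonincreasing in $r$ (equivalently that $r\mapsto -E'(r)$ is nonincreasing), so that $|E'(\xi)|\le|E'(a)|$ for $\xi\ge a$. This should follow from the explicit form $E(r)=(2\pi)^{-N/2}r^{(2-N)/2}K_{(N-2)/2}(r)$ and standard properties of modified Bessel functions: one has the differentiation formula $\frac{d}{dr}\bigl(r^{-\nu}K_\nu(r)\bigr)=-r^{-\nu}K_{\nu+1}(r)$ with $\nu=(N-2)/2$, giving $E'(r)=-(2\pi)^{-N/2}r^{(2-N)/2}K_{N/2}(r)<0$, and then $|E'(r)|=(2\pi)^{-N/2}r^{(2-N)/2}K_{N/2}(r)$ is a product of the nonincreasing positive function $r^{(2-N)/2}$ (when $N\ge2$) and... actually $K_{N/2}$ is decreasing while $r^{(2-N)/2}$ is decreasing only for $N\ge2$, so a cleaner route is to use the known asymptotics/monotonicity: $r\mapsto r^{(N-1)/2}e^{r}|E'(r)|$ and the fact that $-E''(r)>0$ fails in general, so one instead invokes that $-E'$ solves a related Bessel-type ODE and is completely monotone, or simply cites the two-sided bounds $|E'(r)|\asymp r^{1-N}$ as $r\to0^+$, $|E'(r)|\asymp r^{-(N-1)/2}e^{-r}$ as $r\to\infty$ (recorded later in the paper, cf.\ \eqref{Pequiv}) together with the product structure above. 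In fact the slickest argument: $-E'(r)=(2\pi)^{-N/2}r^{(2-N)/2}K_{N/2}(r)$, and for the modified Bessel function $K_\mu$ with $\mu\ge0$ the function $r\mapsto r^{(2-N)/2}K_{N/2}(r)$ is decreasing because $\frac{d}{dr}\bigl(r^{(2-N)/2}K_{N/2}(r)\bigr)= -\tfrac12 r^{(2-N)/2}\bigl(K_{N/2-1}(r)+K_{N/2+1}(r)\bigr)+\bigl(\tfrac{2-N}{2}\bigr)r^{-N/2}K_{N/2}(r)<0$ once one checks the sign — for $N\ge 2$ both terms are manifestly $\le 0$; the case $N=1$ is elementary since then $E(r)=\tfrac12 e^{-r}$ and $|E'(r)|=\tfrac12 e^{-r}$ is plainly decreasing. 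Granting $|E'|$ nonincreasing, $|E'(\xi)|\le|E'(|x-y|)|$, and combining the displays yields $G(x,y)\le 4x_Ny_N\,|E'(|x-y|)|/|x-y|$. Taking the minimum of the two established bounds completes the proof.
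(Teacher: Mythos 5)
Your proof is correct and follows essentially the same route as the paper: the trivial bound $G\le E(|x-y|)$, then the mean value theorem combined with $|x^*-y|^2-|x-y|^2=4x_Ny_N$ and the monotonicity of $|E'|$. The only difference is that the paper simply asserts that $|E'(r)|$ is decreasing, whereas you verify it via the Bessel recurrences (handling $N=1$ separately), which is a legitimate and complete justification.
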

\begin{proof}
It is clear that $G(x,y)\le E(|x-y|)$. Since $|E'(r)|$ is decreasing in $r>0$, it follows from the mean value theorem that
\begin{align*}
G(x,y)\le |E'(|x-y|)|(|x-y|-|x^*-y|)&= |E'(|x-y|)|\cdot\frac{|x-y|^2-|x^*-y|^2}{|x-y|+|x^*-y|}\\
&\le 4x_Ny_N\frac{|E'(|x-y|)|}{|x-y|},
\end{align*}
and the proof of Lemma~\ref{Gest} is complete.
\end{proof}
\begin{Lemma}\label{Gintestt}
Let $s\in[1,N/(N-2))$ and $\theta\in\R$ be such that
\begin{equation}\label{thetacondition}
-1-\frac{1}{s}<\theta<N-1-\frac{N}{s}.
\end{equation}
Then
\begin{equation}\label{Gintest}
\left(\integ{\R^N_+}(G(x,y)h(y_N)^{\theta})^sdy\right)^{1/s}\le Ch(x_N)^{2+\theta-N(1-1/s)}\FA x\in\R^{N}_+.
\end{equation}
\end{Lemma}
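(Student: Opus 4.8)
The plan is to split the integral over $\R^N_+$ into the region near $y=x$ where the singularity of $E$ dominates, and the region far from $x$ where the exponential decay of $E$ and $E'$ takes over. Throughout I would freely use Lemma~\ref{Gest}, which gives the two competing bounds $G(x,y)\le E(|x-y|)$ and $G(x,y)\le 4x_Ny_N|E'(|x-y|)|/|x-y|$, together with the asymptotics $E(r)\asymp r^{2-N}$, $|E'(r)|\asymp r^{1-N}$ as $r\to+0$ and their exponential decay as $r\to+\infty$. Since the desired bound is homogeneous-looking, I expect the proof to break into the two cases $x_N<1$ and $x_N\ge 1$, because $h$ behaves differently in these regimes; by scaling one can essentially reduce to $x_N\le 1$, but I would just treat the cases directly.

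First, for the case $x_N$ small (say $x_N\le 1$), I would cover $\R^N_+$ by the three sets $A_1=\{|x-y|\le x_N\}$, $A_2=\{x_N<|x-y|\le 1\}$, and $A_3=\{|x-y|>1\}$. On $A_1$ one has $y_N\asymp x_N$, so $h(y_N)^\theta\asymp x_N^\theta$ and $G(x,y)\le E(|x-y|)\asymp |x-y|^{2-N}$, giving $\int_{A_1}(G h(y_N)^\theta)^s\,dy \lesssim x_N^{\theta s}\int_{|z|\le x_N}|z|^{(2-N)s}\,dz$, which converges since $s<N/(N-2)$ and yields the exponent $\theta s + N + (2-N)s$, i.e. after the $1/s$-th root exactly $2+\theta-N(1-1/s)$. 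On $A_2$, the key is to use the \emph{other} bound $G(x,y)\le 4x_Ny_N|E'(|x-y|)|/|x-y|\asymp x_N y_N |x-y|^{-N}$; here $y_N$ ranges over $(0,C)$ roughly, so writing $dy = dy' \, dy_N$ and integrating the $y_N$-part against $h(y_N)^{\theta}$ (using $\theta>-1-1/s$ to get convergence of $\int_0^{\cdots} y_N^{s}h(y_N)^{\theta s}\,dy_N$ near $0$) and the $y'$-part over an annulus reduces it to $x_N^s\int_{x_N}^1 \rho^{-Ns}\rho^{N-2}\,d\rho$ (up to $h$-weights), and $\theta<N-1-N/s$ makes the resulting exponent come out as claimed. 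On $A_3$ the exponential decay of $E$ (or $E'$) makes $\int_{A_3}(G h(y_N)^\theta)^s\,dy$ bounded by a constant, which is $\le C h(x_N)^{2+\theta-N(1-1/s)}$ only if that exponent is $\ge 0$ — so here I would instead bound $G$ using whichever of the two Lemma~\ref{Gest} estimates keeps the needed power of $x_N$, namely $G\le E(|x-y|)$ when $2+\theta-N(1-1/s)\le 0$ and $G\lesssim x_N y_N|E'|/|x-y|$ (hence a factor $x_N^{s}$, and $s\ge 2+\theta - N(1-1/s)$ would be needed) — actually since on $A_3$ we also have $|x-y|\gtrsim 1$ and $x_N\le 1$, the bound $G\le E(|x-y|)\le C x_N |x-y|^{2-N}e^{-|x-y|/2}$ (using $x^*$ vs $x$ and $x_N\le1$) gives the needed factor $x_N^{1}\le x_N^{\max\{1,2+\theta-N(1-1/s)\}}$...

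Here is where the bookkeeping is delicate: the correct uniform estimate on $A_3$ for $x_N$ small comes from $G(x,y)\le E(|x-y|)-E(|x^*-y|)\le x_N\sup_{t\in[x_N, ...]}|E'|\lesssim x_N\cdot C(|x-y|)$ with $C$ exponentially decaying, so $\int_{A_3}\lesssim x_N^s\cdot(\text{const})$, and since $2+\theta - N(1-1/s) < 2+\theta \le$ ... — I would check that $2+\theta-N(1-1/s)\le 1$ always holds on the allowed parameter range (it does: $\theta < N-1-N/s$ forces $2+\theta - N(1-1/s) = 2+\theta - N + N/s < 1$), so $x_N^s \le x_N^{\min\{s,1\}}$ is too weak unless $s\ge$ that exponent — which again follows from the parameter constraints. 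For the case $x_N\ge 1$ the weight $h(x_N)=1$ and $h(y_N)^\theta \le \max\{1, h(y_N)^\theta\}$ is integrable against $G(x,y)$ uniformly (split again near and far from $x$, use $s<N/(N-2)$ near and exponential decay far), so the right side is just a constant $= C\cdot 1 = Ch(x_N)^{2+\theta-N(1-1/s)}$, valid because the exponent is $<1$ but more importantly the left side is bounded.

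The main obstacle I anticipate is precisely the $A_2$ (intermediate annulus) estimate when $x_N$ is small: one must carefully separate the $y_N$-integration — exploiting $\theta>-1-1/s$ near $y_N=0$ to avoid a divergence, while the $|x-y|$-integration over $x_N<|x-y|<1$ produces a power $x_N^{2+\theta-N(1-1/s)-s}$ from the $|x-y|^{-Ns}$ singularity that, multiplied by the $x_N^s$ from the $y_N$-factor, gives exactly the claimed power; getting the endpoint conditions $\theta<N-1-N/s$ (convergence of the $|x-y|$-integral, or rather the fact that the lower endpoint $|x-y|=x_N$ dominates) and checking that the $A_1$ and $A_3$ contributions are genuinely dominated by the $A_2$ power requires matching all three exponents, and is where sign conditions on $2+\theta-N(1-1/s)$ must be tracked. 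Once the two-sided structure of $G$ from Lemma~\ref{Gest} is in hand, each individual integral is a routine radial computation; assembling them with the correct power of $h(x_N)$ in every region is the real content.
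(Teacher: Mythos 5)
Your overall strategy coincides with the paper's: split into $x_N<1$ and $x_N\ge 1$, decompose into a near region $|x-y|\lesssim x_N$ (where $G\le E\lesssim|x-y|^{2-N}$ and $y_N\asymp x_N$), an intermediate annulus (where the bound $G\lesssim x_Ny_N|x-y|^{-N}$ from Lemma~\ref{Gest} is the right one), and a far region handled by exponential decay. The near-region and far-region computations, and your check that $2+\theta-N(1-1/s)<1$ so that the far field's $x_N^s$ suffices, all match the paper's $I_1$, $I_3$, $I_5$, $I_6$.

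The gap is in the intermediate annulus $A_2$, which you correctly identify as the heart of the matter but do not execute correctly. Decoupling the integral as $\bigl(\int_0^{O(1)}y_N^{(1+\theta)s}\,dy_N\bigr)\cdot x_N^s\int_{x_N}^1\rho^{-Ns}\rho^{N-2}\,d\rho$ produces the power $x_N^{\,s+N-Ns-1}$, which is independent of $\theta$ and, since the deficit $(1+\theta)s+1$ relative to the target exponent $(2+\theta)s-N(s-1)$ is strictly positive, is a strictly \emph{weaker} bound than required for $x_N<1$; the condition $\theta<N-1-N/s$ never enters this computation, contrary to your claim that it "makes the exponent come out." The weight $y_N^{(1+\theta)s}$ must stay coupled to the geometry. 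The paper does this by splitting the annulus into $y_N<x_N$ (its $I_2$: the $y_N$-integral is then over $(0,x_N)$ and itself contributes the missing $x_N^{(1+\theta)s+1}$) and $y_N\ge x_N$ (its $I_4$: there one uses $y_N\le x_N+|x-y|$ to fold $y_N^{(1+\theta)s}$ into the radial integrand, yielding $\int_{x_N/2}^\infty r^{(1+\theta)s-Ns+N-1}\,dr$, whose domination by the lower endpoint $r\asymp x_N$ is \emph{exactly} the condition $\theta<N-1-N/s$ — the mechanism you gesture at parenthetically but never implement — and which further requires separate sub-cases $\theta\le-1$ and $\theta>-1$ because $(y_N-x_N)^{(1+\theta)s}$ versus $(x_N^{(1+\theta)s}+|x-y|^{(1+\theta)s})$ bound $y_N^{(1+\theta)s}$ in opposite parameter ranges). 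Without this case split your plan does not close.
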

\begin{proof}
We only prove \eqref{Gintest} in the case $N\ge 3$. The proof of \eqref{Gintest} in the case $N=1,2$ is similar.

We first prove \eqref{Gintest} in the case $x_N<1$. We set
\begin{align*}
I_1&:=
\integ{\{|x-y|<x_N/2\}}|x-y|^{(2-N)s}h(y_N)^{\theta s}dy,\\
I_2&:=\integ{\{x_N/2\le|x-y|<1/2,y_N<x_N\}}x_N^sy_N^{(1+\theta)s}|x-y|^{-Ns}dy,\\
I_3&:=\integ{\{|x-y|\ge 1/2,y_N<x_N\}}x_N^sy_N^{(1+\theta)s}e^{-s|x-y|/2}dy,\\
I_4&:=\integ{\{x_N/2\le|x-y|<1/2,y_N\ge x_N\}}x_N^sy_N^s|x-y|^{-Ns}h(y_N)^{\theta s}dy,\\
I_5&:=\integ{\{|x-y|\ge 1/2, x_N\le y_N<1\}}x_N^sy_N^{(1+\theta)s}e^{-s|x-y|/2}dy,\\
I_6&:=\integ{\{|x-y|\ge 1/2, y_N\ge 1\}}x_N^sy_N^se^{-s|x-y|/2}dy.
\end{align*}
Then, by Lemma~\ref{Gest} and the fact
\begin{equation}\label{Eord}
E(r)\le\begin{cases}
Cr^{2-N}&\jf r<1,\\
Ce^{-r/2}&\jf r\ge 1,
\end{cases}\quad
|E'(r)|\le\begin{cases}
Cr^{1-N}&\jf r<1,\\
Ce^{-r/2}&\jf r\ge 1,
\end{cases}
\end{equation}
we have
\begin{equation}\label{G1}
\integ{\R^N_+}(G(x,y)h(y_N)^\theta)^sdy\le C(I_1+I_2+I_3+I_4+I_5+I_6).
\end{equation}
We first estimate $I_1$. It follows from $x_N/2<y_N<3x_N/2$ for $y\in B(x,x_N/2)$, and $(N-2)s<N$ that
\begin{equation}\label{I1}
\begin{aligned}
I_1&\le C\integ{\{|x-y|<x_N/2\}}|x-y|^{(2-N)s}\min\{x_N,1\}^{\theta s}dy\\
&\le Cx_N^{(2-N)s+N}x_N^{\theta s}= Cx_N^{(2+\theta)s-N(s-1)}.
\end{aligned}
\end{equation}
We next estimate $I_2+I_3$. Since $e^{-t/2}\le Ct^{-N}$ for any $t\ge 1/2$ and $(1+\theta)s>-1$ by \eqref{thetacondition}, it follows that
\begin{equation}\label{I23}
\begin{aligned}
I_2+I_3&\le C\integ{\{|x-y|\ge x_N/2,y_N<x_N\}}x_N^sy_N^{(1+\theta)s}|x-y|^{-Ns}dy\\
&\le C\integ{\{y_N<x_N\}}x_N^sy_N^{(1+\theta)s}\max\left\{\frac{x_N}{2},|x-y|\right\}^{-Ns}dy\\
&\le C\integ0^{x_N}\left(\integ{\R^{N-1}}x_N^sy_N^{(1+\theta)s}\max\left\{\frac{x_N}{2},|x'-y'|\right\}^{-Ns}dy'\right)dy_N\\
&=C\integ0^{x_N}\left(\left(\frac{x_N}{2}\right)^{N-1}\integ{\R^{N-1}}\max\left\{\frac{x_N}{2},\frac{x_N}{2}|z'|\right\}^{-Ns}dz'\right)x_N^sy_N^{(1+\theta)s}dy_N\\
&=Cx_N^{N-1-Ns+s}x_N^{(1+\theta)s+1}=Cx_N^{(2+\theta)s-N(s-1)}.
\end{aligned}
\end{equation}
We estimate $I_4$. We observe from $y_N\le x_N+|x-y|< 3/2$ for $y\in B(x,1/2)$ that
\[
y_N\approx h(y_N)\for y\in B(x,1/2).
\]
Thus
\[
I_4\le C\integ{\{x_N/2\le|x-y|<1/2,y_N> x_N\}}x_N^sy_N^{(1+\theta)s}|x-y|^{-Ns}dy.
\]
If $\theta\le -1$, since $Ns-(1+\theta)s>N$ and $(1+\theta)s>-1$ by \eqref{thetacondition},
\begin{equation}\label{I4-1}
\begin{aligned}
I_4&\le C\integ{\{x_N/2\le|x-y|,y_N> x_N\}}x_N^s(y_N-x_N)^{(1+\theta)s}|x-y|^{-Ns}dy\\
&\le C\integ{\{|z|\ge x_N/2,z_N>0\}}x_N^sz_N^{(1+\theta)s}|z|^{-Ns}dz\\
&=Cx_N^s\integ{x_N/2}^\infty r^{(1+\theta)s-Ns+N-1}dr\cdot\integ{\{|\tau|=1,\tau_N>0\}}\tau_N^{(1+\theta)s}d\sigma(\tau)\le Cx_N^{(2+\theta)s-N(s-1)}.
\end{aligned}
\end{equation}
On the other hand, if $\theta>-1$, it follows from \eqref{thetacondition} that $N-1-N/s>-1$ and thus $s>1$. This together with $y_N\le x_N+|x-y|$ and $Ns-(1+\theta)s>N$ implies that
\begin{equation}\label{I4-2}
\begin{aligned}
I_4&\le C\integ{\{x_N/2\le|x-y|,y_N\ge x_N\}}x_N^s(x_N^{(1+\theta)s}+|x-y|^{(1+\theta)s})|x-y|^{-Ns}dy\\
&\le C\left(\integ{\{|x-y|\ge x_N/2\}}\left(x_N^{(2+\theta)s}|x-y|^{-Ns}+x_N^s|x-y|^{(1+\theta)s-Ns}\right)dy\right)\\
&\le C\left(x_N^{(2+\theta)s}x_N^{-N(s-1)}+x_N^sx_N^{(1+\theta)s-N(s-1)}\right)\le Cx_N^{(2+\theta)s-N(s-1)}.
\end{aligned}
\end{equation}
We estimate $I_5$ as
\begin{align*}
I_5&\le C\integ{\{|x-y|\ge 1/2,x_N\le y_N<1\}}x_N^s\max\{x_N^{(1+\theta)s},1\}e^{-s|x-y|/2}dy\\
&\le Cx_N^s\max\{x_N^{(1+\theta)s},1\}\integ{\{|x-y|\ge 1/2\}}e^{-s|x-y|/2}dy\\
&\le C\max\{x_N^{(2+\theta)s},x_N^s\}=Cx_N^{\min\{(2+\theta)s,s\}}.
\end{align*}
Since $(2+\theta)s>(2+\theta)s-N(s-1)$ and $s>(2+\theta)s-N(s-1)$ by \eqref{thetacondition},
\begin{equation}\label{I5}
I_5\le Cx_N^{(2+\theta)s-N(s-1)}.
\end{equation}
We finally estimate $I_6$. It follows from $y_N\le x_N+|x-y|$ and \eqref{thetacondition} that
\begin{equation}\label{I6}
\begin{aligned}
I_6&\le C\integ{\{|x-y|\ge 1/2,y_N\ge 1\}}x_N^s(x_N^s+|x-y|^s)e^{-s|x-y|/2}dy\\
&\le C\left(x_N^{2s}\integ{\{|x-y|\ge 1/2\}}e^{-s|x-y|/2}dy
+x_N^s\integ{\{|x-y|\ge 1/2\}}|x-y|^se^{-s|x-y|/2}dy\right)\\
&\le Cx_N^s\le Cx_N^{(2+\theta)s-N(s-1)}.
\end{aligned}
\end{equation}
Combining \eqref{G1}, \eqref{I1}, \eqref{I23}, \eqref{I4-1}, \eqref{I4-2}, \eqref{I5}, and \eqref{I6}, we obtain
\begin{align*}
\left(\integ{\R^N_+}(G(x,y)h(y_N)^\theta)^sdy\right)^{1/s}\le C x_N^{(2+\theta)s-N(s-1)}.
\end{align*}
We next prove \eqref{Gintest} in the case $x_N\ge 1$. It suffices to prove that
\[
\integ{\R^N_+}(G(x,y)h(y_N)^\theta)^sdy\le C.
\]
We set
\begin{align*}
I_7&:=\integ{\{|x-y|<1/2\}}|x-y|^{(2-N)s}h(y_N)^{\theta s}dy,\\
I_8&:=\integ{\{y_N<1/2,|x-y|\ge 1/2\}}x_N^sy_N^{(1+\theta)s}e^{-|x-y|/2}dy,\\
I_9&:=\integ{\{y_N\ge 1/2,|x-y|\ge 1/2\}}|x-y|^{(1-N)s/2}e^{-|x-y|s}h(y_N)^{\theta s}dy.
\end{align*}
Then it follows from Lemma~\ref{Gest} and \eqref{Eord} that
\[
\integ{\R^N_+}(G(x,y)h(y_N)^\theta)^sdy\le C(I_7+I_8+I_9).
\]
Since $1/2<h(y_N)\le 1$ for $y\in B(x,1/2)$ and $(N-2)s<N$, $I_7\le C$. It is clear that $I_9\le C$. It remains to estimate $I_8$. It follows from $x_N\le y_N+|x-y|$ and \eqref{thetacondition} that
\[
\begin{aligned}
I_8&\le C\integ{\{y_N<1/2\}}\left(y_N^{(2+\theta)s}+y_N^{(1+\theta)s}|x-y|^{s}\right)e^{-|x-y|s/2}dy\\
&\le C\integ{\{y_N<1/2\}}\left(y_N^{(2+\theta)s}+y_N^{(1+\theta)s}|x'-y'|^{s}\right)e^{-|x'-y'|s/2}dy\\
&\le C\integ0^{1/2}(y_N^{(2+\theta)s}+y_N^{(1+\theta)s})dy_N=C,
\end{aligned}
\]
and the proof of Lemma~\ref{Gintestt} is complete.
\end{proof}
\begin{proof}[Proof of Proposition~{\rm\ref{Glaa}}]
Let $\beta'\le\beta$. Set $1/s:=1-1/q+1/r$ and let $\theta_1$, $\theta_2\in\R$ be such that
\begin{equation}\label{theta2}
\left(1-\frac{1}{q}\right)s\theta_1+\frac{1}{r}s\theta_2+\left(1-\frac{1}{s}\right)q\alpha=0.
\end{equation}

Since $s\in[1,N/(N-2))$ by $1/r>1/q-2/N$, it follows from \eqref{theta2} and the H\"{o}lder inequality that
\begin{align*}
&\left|\integ{\R^N_+}G(x,y)f(y)dy\right|\\
&\le \integ{\R^N_+}\left(G(x,y)h(y_N)^{\theta_1}\right)^{s(1-1/q)}\left(G(x,y)^s|f(y)|^qh(y_N)^{s\theta_2}\right)^{1/r}\left(|f(y)|h(y_N)^\alpha\right)^{1-q/r}dy\\
&
\le\left(\integ{\R^N_+}\left(G(x,y)h(y_N)^{\theta_1}\right)^sdy\right)^{1-1/q}
\left(\integ{\R^N_+}\left(G(x,y)h(y_N)^{\theta_2}\right)^s|f(y)|^qdy\right)^{1/r}\|f\|_{L^q_\alpha}^{1-q/r}.
\end{align*}
This together with \eqref{Gintest} implies that
\begin{multline*}
\left|\integ{\R^N_+}G(x,y)f(y)dy\right|\\
\le C h(x_N)^{s(1-1/q)(2+\theta_1-N(1-1/s))}\|f\|_{L^q_\alpha}^{1-q/r}\left(\integ{\R^N_+}\left(G(x,y)h(y_N)^{\theta_2}\right)^s|f(y)|^qdy\right)^{1/r},
\end{multline*}
provided that
\begin{equation}\label{theta1-1}
-1-\frac{1}{s}<\theta_1<N-1-\frac{N}{s}.
\end{equation}
Under the condition \eqref{theta1-1},
\begin{align*}
&\integ{\R^N_+}\left|\integ{\R^N_+}G(x,y)f(y)dy\right|^rh(x_N)^{r\beta'}dx\\
&\le C\integ{\R^N_+}\left(\integ{\R^N_+}\left(G(x,y)h(y_N)^{\theta_2}\right)^s|f(y)|^qdy\right)h(x_N)^{rs(1-1/q)(2+\theta_1-N(1-1/s))+r\beta'}dx\cdot\|f\|_{L^q_\alpha}^{r-q}\\
&=C\integ{\R^N_+}\left(\integ{\R^N_+}\left(G(x,y)h(x_N)^{r(1-1/q)(2+\theta_1-N(1-1/s))+r\beta'/s}\right)^sdx\right)|f(y)|^qh(y_N)^{s\theta_2}dy\cdot\|f\|_{L^q_\alpha}^{r-q}.
\end{align*}
Since $G$ is symmetric, it follows from \eqref{Gintest} that
\[
\|G[f]\|_{L^r_{\beta'}}^r\le C\integ{\R^N_+}h(y_N)^{s(2+r(1-1/q)(2+\theta_1-N(1-1/s))+r\beta'/s-N(1-1/s))}\cdot|f(y)|^qh(y_N)^{s\theta_2}dy\cdot\|f\|_{L^q_\alpha}^{r-q},
\]
under the conditions \eqref{theta1-1} and
\begin{equation}\label{theta1-2}
-1-\frac{1}{s}<r\left(1-\frac{1}{q}\right)\left(2+\theta_1-N\left(1-\frac{1}{s}\right)\right)+\frac{r\beta'}{s}<N-1-\frac{N}{s}.
\end{equation}
Since
\begin{align*}
&s\left(2+r\left(1-\frac{1}{q}\right)\left(2+\theta_1-N\left(1-\frac{1}{s}\right)\right)+\frac{r\beta'}{s}-N\left(1-\frac{1}{s}\right)\right)+s\theta_2\\
&=r\left(\left(1-\frac{1}{q}\right)s\theta_1+\frac{1}{r}s\theta_2\right)+rs\left(1-\frac{1}{q}+\frac{1}{r}\right)\left(2-N\left(1-\frac{1}{s}\right)\right)+r\beta'\\
&=-r\left(1-\frac{1}{s}\right)q\alpha+r\left(2-N\left(1-\frac{1}{s}\right)\right)+r\beta'\\
&=-qr\left(1-\frac{1}{s}-\frac{1}{q}\right)\alpha+r\left(2-N\left(1-\frac{1}{s}\right)-\alpha+\beta'\right)=q\alpha+r\left(2-N\left(1-\frac{1}{s}\right)-\alpha+\beta'\right)
\end{align*}
by \eqref{theta2}, we obtain
\[
\|G[f]\|^r_{L^r_\beta}\le\|G[f]\|^r_{L^r_{\beta'}}\le  C\integ{\R^N_+}|f(y)|^qh(y_N)^{q\alpha}dy\cdot\|f\|_{L^q_\alpha}^{r-q}=C\|f\|_{L^q_\alpha}^r,
\]
if there are $\beta'\le\beta$ and $\theta_1\in\R$ satisfying \eqref{theta1-1}, \eqref{theta1-2}, and
\begin{equation}\label{beta'}
\beta'\ge\alpha-2+N\left(1-\frac{1}{s}\right).
\end{equation}
Since $r/s=1+r(1-1/q)>1$,
\begin{align*}
\eqref{theta1-2}&\iff -1-\frac{1}{s}-\frac{r\beta'}{s}<\left(\frac{r}{s}-1\right)\left(2+\theta_1-N\left(1-\frac{1}{s}\right)\right)<N-1-\frac{N}{s}-\frac{r\beta'}{s}\\
&\iff -\frac{s+1+r\beta'}{r-s}-2+N\left(1-\frac{1}{s}\right)<\theta_1<\frac{(N-1)s-N-r\beta'}{r-s}-2+N\left(1-\frac{1}{s}\right).
\end{align*}
Thus it suffices to find $\beta'\le\beta$ satisfying \eqref{beta'} and
\begin{equation}\label{beta'-1}
-1-\frac{1}{s}<\frac{(N-1)s-N-r\beta'}{r-s}-2+N\left(1-\frac{1}{s}\right),
\end{equation}
\begin{equation}\label{beta'-2}
-\frac{s+1+r\beta'}{r-s}-2+N\left(1-\frac{1}{s}\right)<N-1-\frac{N}{s},
\end{equation}
to prove the existence of $\beta'$ and $\theta_1$ satisfying \eqref{theta1-1}, \eqref{theta1-2}, and \eqref{beta'}. Furthermore, since
\[
\eqref{beta'-1}\iff \beta'<\frac{(N-1)s-N}{r}+\frac{(N-1)(r-s)}{r}\left(1-\frac{1}{s}\right)\iff \beta'<(N-1)\left(1-\frac{1}{s}\right)-\frac{1}{r}
\]
and
\[
\eqref{beta'-2}\iff \beta'>-\frac{s+1}{r}-\frac{r-s}{r}\iff \beta'>-1-\frac{1}{r},
\]
there is $\beta'\le\beta$ satisfying \eqref{beta'}, \eqref{beta'-1}, and \eqref{beta'-2}, if and only if $\beta$ satisfies
\begin{equation}\label{beta'-3}
\beta\ge\alpha-2+N\left(1-\frac{1}{s}\right),
\end{equation}
\[
\beta>-1-\frac{1}{r},
\]
\begin{equation}\label{beta'-6}
(N-1)\left(1-\frac{1}{s}\right)-\frac{1}{r}>\alpha-2+N\left(1-\frac{1}{s}\right),
\end{equation}
\begin{equation}\label{beta'-4}
(N-1)\left(1-\frac{1}{s}\right)-\frac{1}{r}>-1-\frac{1}{r}.
\end{equation}
\eqref{beta'-4} is true for any $q$, $r$ with $q<r$. It holds that
\begin{gather*}
\eqref{beta'-3}\iff \beta\ge\alpha-2+N\left(\frac{1}{q}-\frac{1}{r}\right)\iff\frac{N}{r}+\beta\ge\frac{N}{q}+\alpha-2,\\
\eqref{beta'-6}\iff \alpha+\left(1-\frac{1}{s}\right)+\frac{1}{r}<2\iff \frac{1}{q}+\alpha<2,
\end{gather*}
and the proof of Proposition~\ref{Glaa} is complete.
\end{proof}
\subsection{Proof of Proposition~\ref{cpt}}
We next prove Proposition~\ref{cpt}, the compactness of the integral operator $f\mapsto G[af]$.
\begin{proof}[Proof of Proposition~{\rm\ref{cpt}}]
Set
\[
\frac{1}{q}:=\frac{1}{r}+\frac{1}{s'},\quad \alpha:=\theta+\beta.
\]
It holds that $q\in(1,\infty)$ and
\[
\frac{1}{q}+\alpha<2,\quad\frac{1}{q}-\frac{1}{r}<\frac{2}{N},\quad \frac{N}{r}+\beta\ge\frac{N}{q}+\alpha-2,
\]
and hence $G[\cdot]$ is a bounded operator from $L^q_\alpha$ to $L^r_\beta$ by Proposition~\ref{Glaa}. This together with the H\"older inequality and \eqref{Cptassump} implies that
\begin{equation}\label{Taopnorm}
\|T_a\|_{L^r_\beta\to L^r_\beta}\le \|a\|_{L^{s'}_\theta}\|G[\cdot]\|_{L^q_\alpha\to L^r_\beta}\le C\|a\|_{L^{s'}_\theta}.
\end{equation}
We first consider the case $a\in C^\infty_c(\R^N_+)$. Let $q':=q/(q-1)$, $r':=r/(r-1)$, and $T_a':L^{r'}_{-\beta}\to L^{r'}_{-\beta}$ be the adjoint operator of $T_a$. It suffices to prove that $T_a'$ is compact. For any $f\in L^r_\beta$ and $g\in L^{r'}_{-\beta}$, it follows that
\begin{align*}
\integ{\R^N_+}f(T_a'g)dx=\integ{\R^N_+}(T_af)gdx&=\integ{\R^N_+}\left(\integ{\R^N_+}G(x,y)a(y)f(y)dy\right)g(x)dx\\
&=\integ{\R^N_+}\left(\integ{\R^N_+}G(x,y)g(x)dx\right)a(y)f(y)dy.
\end{align*}
Since $G$ is symmetric, it implies that
\begin{equation}\label{Ta'}
T_a'g=aG[g]\quad\textrm{for any}\quad g\in L^{r'}_{-\beta}.
\end{equation}
Let open sets $\Omega\Subset\Omega'\Subset\R^N_+$ be such that $\supp a\subset\Omega$. It follows from the $L^p$ estimates (see e.g. \cite[Theorem 9.11]{GT}) and \eqref{dual} that
\[
\|G[f]\|_{W^{2,r'}(\Omega)}\le C\left(\|f\|_{L^{r'}(\Omega')}+\|G[f]\|_{L^{q'}(\Omega')}\right)\le C\left(\|f\|_{L^{r'}_{-\beta}}+\|G[f]\|_{L^{q'}_{-\alpha}}\right)\le C\|f\|_{L^{r'}_{-\beta}}.
\]
Due to the compact embedding theorem, $f\mapsto G[f]$ is a compact operator from $L^{r'}_{-\beta}$ to $L^{r'}(\Omega)$. This together with
\[
\|ah\|_{L^{r'}_{-\beta}}\le C\|h\|_{L^{r'}(\Omega)}\FA h\in L^{r'}_\loc(\R^N_+)
\]
and \eqref{Ta'} implies that $T_a'$ is a compact operator from $L^{r'}_{-\beta}$ to itself.

We finally prove the general case. Let $\{a_k\}_{k\in\{1,2,\ldots\}}\subset C^\infty_c(\R^N_+)$ be such that $\|a-a_k\|_{L^{s'}_{\theta}}\to 0$ as $n\to\infty$. Then $T_{a_k}$ is a compact operator from $L^r_\beta$ to itself for each $k$. Since
\[
\|T_a-T_{a_k}\|_{L^r_\beta\to L^r_\beta}\le C\|a-a_k\|_{L^{s'}_\theta}\to 0\quad\textrm{as}\quad k\to\infty
\]
by \eqref{Taopnorm}, $T_a$ is a compact operator from $L^r_\beta$ to itself, and the proof of Proposition~\ref{cpt} is complete.
\end{proof}
\def\thesection{Appendix \Alph{section}}
\section{Proof of Lemma~{\rm\ref{Dstrategy}}}\label{Dst}
\def\thesection{\Alph{section}}
In this appendix, we give a proof of Lemma~\ref{Dstrategy}.  Instead of Lemma~\ref{Dstrategy}, we prove the following generalized version, which is needed for the proof of Lemma~\ref{psireg}.
\begin{Proposition}\label{Dstrategy'}
Assume the same conditions as in Theorem~{\rm\ref{Thm1}}. Let $r_0>1$ and $\beta\in\R$ be such that
\begin{equation}\label{s0b0}
\frac{1}{r_0}<1-\frac{p-1}{q},\quad\frac{1}{r_0}+\beta_0<2-(p-1)\left(\frac{1}{q}+\alpha\right).
\end{equation}
Then there is a sequence of sets $D_j(r_0,\beta_0)\subset (1,\infty)\times\R$, $j\in\{0,1,\ldots\}$, with the following properties.
\begin{enumerate}[label={\rm(\alph*)}]
\item $(r_0,\beta_0)\in D_0(r_0,\beta_0).$
\item For any $a\in L^{q/(p-1)}_{(p-1)\alpha}$ and $f\in \bigcap_{(r,\beta)\in D_j(r_0,\beta_0)} L^r_\beta$, $G[af]$ belongs to $\bigcap_{(r',\beta')\in D_j(r_0,\beta_0)}L^{r'}_{\beta'}$.
\item $D_j(r_0,\beta_0)=D_*$ for sufficiently large $j\in\{1,2,\ldots\}$.
\end{enumerate}
\end{Proposition}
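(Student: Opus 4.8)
The plan is to construct the sets $D_j(r_0,\beta_0)$ explicitly by iterating the mapping rule coming from Proposition~\ref{Glaa} applied to $f\mapsto G[af]$ with $a\in L^{q/(p-1)}_{(p-1)\alpha}$, and to track how the exponent pair improves at each step until it saturates at $D_*$. First I would record the effect of a single iteration: if $f\in L^r_\beta$ and $a\in L^{q/(p-1)}_{(p-1)\alpha}$, then by H\"older $af\in L^m_\gamma$ with $1/m=1/r+(p-1)/q$ and $\gamma=\beta+(p-1)\alpha$, and then Proposition~\ref{Glaa} gives $G[af]\in L^{r'}_{\beta'}$ for any $(r',\beta')$ with $1/r'+\beta'>-1$, $1/m-1/r'<2/N$, $N/r'+\beta'\ge N/m+\gamma-2$, provided the source condition $1/m+\gamma<2$ holds. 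The source condition $1/r+(p-1)/q+\beta+(p-1)\alpha<2$ is exactly the content that must be propagated; its validity at step $j=0$ is guaranteed by \eqref{s0b0}, which is why that hypothesis is imposed.

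Next I would define $D_j(r_0,\beta_0)$ recursively. Set $D_0:=\{(r_0,\beta_0)\}$, or more conveniently the full ``box'' of pairs $(r,\beta)$ one can reach from $(r_0,\beta_0)$ by Sobolev-type embeddings within $L^q_\alpha$-admissible range; and define $D_{j+1}$ to be the set of all $(r',\beta')$ obtainable from some $(r,\beta)\in D_j$ via the one-step rule above (intersected with the admissibility constraint $1/r'+\beta'>-1$, i.e.\ contained in $D_*$). Property (a) is then immediate. Property (b) is immediate from the one-step analysis together with the observation that $\bigcap_{(r,\beta)\in D_j}L^r_\beta$ contains each $L^r_\beta$ with $(r,\beta)\in D_j$, so applying the rule termwise and taking the intersection over the image gives membership in $\bigcap_{(r',\beta')\in D_{j+1}}L^{r'}_{\beta'}$. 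The real work is property (c): showing the iteration terminates at $D_*$ in finitely many steps.

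For (c) I would monitor the quantity $\lambda_j:=\sup\{\,1/r+\beta : (r,\beta)\in D_j,\ 1/r \text{ in the relevant admissible interval}\,\}$, or more precisely track the ``worst'' remaining constraint — the $L^q_\alpha$-type source bound $1/r+(p-1)/q+\beta+(p-1)\alpha<2$. The key point is that the one-step rule allows one to decrease $1/r$ by up to (just under) $2/N$ while simultaneously relaxing $\beta$, so the effective ``deficit'' $2-(p-1)(1/q+\alpha)-1/r-\beta$ grows by a fixed positive amount (controlled below by a constant depending only on $N$, $p$, $q$, $\alpha$) at each iteration, as long as we have not yet reached the region where the only binding constraint is $1/r+\beta>-1$. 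Once the deficit is large enough, every pair in $D_*$ becomes reachable and the iteration stabilizes; this is where one invokes $1/r'-1/r$ up to $2/N$ together with the freedom in $\beta'$ to cover the whole half-plane $\{1/r'+\beta'>-1\}$. I expect the main obstacle to be bookkeeping: making the recursive definition of $D_j$ clean enough that (b) is transparent, and proving the quantitative ``gain per step'' estimate carefully enough — in particular handling the boundary cases in Proposition~\ref{Glaa} ($1/q-1/r<2/N$ strict versus $N/r+\beta\ge N/q+\alpha-2$ non-strict) and the case split $N=1,2$ versus $N\ge 3$ — so that finiteness of the number of steps is genuinely established rather than merely plausible. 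Finally, Lemma~\ref{Dstrategy} follows as the special case $r_0=q$, $\beta_0=\alpha$, upon checking that \eqref{s0b0} reduces to consequences of \eqref{qalpha}.
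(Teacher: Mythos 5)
Your overall strategy is the same as the paper's: one step is H\"older plus Proposition~\ref{Glaa} applied to $af\in L^m_\gamma$ with $1/m=1/r+(p-1)/q$ and $\gamma=\beta+(p-1)\alpha$, the sets $D_j$ record which pairs are reachable after $j$ steps (making (a) and (b) automatic), and (c) is proved by exhibiting a fixed quantitative gain per step. The paper implements this by writing $D_j(r_0,\beta_0)$ in closed form via \eqref{tauchoice}--\eqref{D} and proving in Lemma~\ref{Diterat} that every $(r',\beta')\in D_{j+1}$ is reachable from some $(r,\beta)\in D_j$; your reduction of Lemma~\ref{Dstrategy} to the case $(r_0,\beta_0)=(q,\alpha)$ is also exactly what the paper does.

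The one quantitative claim you make in step (c) is, however, wrong as stated, and it is precisely the point where the bookkeeping needs an idea rather than patience. You assert that one can decrease $1/r$ by up to (just under) $2/N$ per step \emph{while simultaneously} growing the deficit $2-(p-1)(1/q+\alpha)-1/r-\beta$ by a fixed positive amount. These two gains compete. The constraint $N/r'+\beta'\ge N/m+\gamma-2$ in Proposition~\ref{Glaa} forces $\beta'\ge\beta+N\delta-\bigl(2-(p-1)(N/q+\alpha)\bigr)$ whenever $1/r'=1/r-\delta$, so the deficit changes by at most $\bigl(2-(p-1)(N/q+\alpha)\bigr)-(N-1)\delta$, which is negative once $\delta$ exceeds $\bigl(2-(p-1)(N/q+\alpha)\bigr)/(N-1)$. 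This is exactly why the paper caps the per-step decrease of $1/r$ at $\tau=\min\bigl\{2/N-(p-1)/q,\ \bigl(2-(p-1)(N/q+\alpha)\bigr)/(N-1)\bigr\}$ for $N\ge 2$: the first term is the H\"older/Sobolev limit you identified, the second is the limit imposed by preserving the source condition \eqref{Diterat2} for the next iterate. With that corrected, balanced choice your termination argument does go through, since both $\tau$ and the per-step deficit gain $2-(p-1)(N/q+\alpha)$ are positive by \eqref{qalpha} and \eqref{s0b0}, so finitely many steps cover all of $D_*$. As written, though, the claim that the deficit grows while $1/r$ drops by nearly $2/N$ would fail for large $N$, so the gap you flagged as "bookkeeping" is a genuine one that must be closed by the minimum defining $\tau$.
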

Indeed, clearly from \eqref{qalpha}, the pair $(r_0,\beta_0)=(q,\alpha)$ satisfies the condition \eqref{s0b0}, and thus Proposition~\ref{Dstrategy'} implies Lemma~\ref{Dstrategy}.

We give a construction of $D_j(r_0,\beta_0)$. We set
\begin{equation}\label{tauchoice}
\tau:=\begin{dcases}
\min\left\{\frac{2}{N}-\frac{p-1}{q},\frac{1}{N-1}\left(2-(p-1)\left(\frac{N}{q}+\alpha\right)\right)\right\}&\jf N\ge 2,\\
2-\frac{p-1}{q}&\jf N=1,
\end{dcases}
\end{equation}
and
\begin{equation}\label{betaj}
\beta_j(r):=\max\left\{\beta_0+N\left(\frac{1}{r_0}-\frac{1}{r}\right)-j\left(2-(p-1)\left(\frac{N}{q}+\alpha\right)\right),-1-\frac{1}{r}\right\}
\end{equation}
for $j\in\{0,1,\ldots\}$ and $r>1$. Since
\[
2-(p-1)\left(\frac{N}{q}+\alpha\right)>0
\]
by \eqref{qalpha}, $\tau>0$ and $\beta_j(r)$ is nonincreasing in $j$ for any $r>1$. We define
\begin{equation}\label{D}
\begin{aligned}
&D_j(r_0,\beta_0)\\
&:=
\begin{dcases}
\{(r_0,\beta_0)\}&\for j=0,\\
\left\{(r,\beta): r>1,\frac{1}{r_0}-j\tau<\frac{1}{r}<\frac{1}{r_0}+j\frac{p-1}{q},\beta>\beta_j(r)\right\}&\for j\in\{1,2,\ldots\}.
\end{dcases}
\end{aligned}
\end{equation}
\begin{Lemma}\label{Diterat}
Assume the same conditions as in Theorem~{\rm\ref{Thm1}} and \eqref{s0b0}. Let $j\in\{0,1,\ldots\}$ and $(r',\beta')\in D_{j+1}(r_0,\beta_0)$. Then there is $(r,\beta)\in D_j(r_0,\beta_0)$ such that
\begin{gather}
\frac{1}{r'}\le\frac{1}{r}+\frac{p-1}{q}<\min\left\{\frac{1}{r'}+\frac{2}{N},1\right\},\label{Diterat1}\\
\frac{1}{r}+\beta<2-(p-1)\left(\frac{1}{q}+\alpha\right),\label{Diterat2}\\
\frac{N}{r}+\beta<\frac{N}{r'}+\beta'-(p-1)\left(\frac{N}{q}+\alpha\right)+2.\label{Diterat3}
\end{gather}
\end{Lemma}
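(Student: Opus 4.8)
\emph{Proof strategy.} The role of Lemma~\ref{Diterat} is to supply the inductive step (b) of Proposition~\ref{Dstrategy'}: for $a\in L^{q/(p-1)}_{(p-1)\alpha}$ and $f\in L^r_\beta$ the H\"older inequality gives $af\in L^{\tilde q}_{\tilde\alpha}$ with $1/\tilde q=1/r+(p-1)/q$ and $\tilde\alpha=\beta+(p-1)\alpha$, and then \eqref{Diterat1}--\eqref{Diterat3} are exactly the hypotheses $1<\tilde q\le r'$, $1/\tilde q-1/r'<2/N$, $1/\tilde q+\tilde\alpha<2$, $N/r'+\beta'\ge N/\tilde q+\tilde\alpha-2$ of Proposition~\ref{Glaa} that are needed to conclude $G[af]\in L^{r'}_{\beta'}$ (the remaining hypothesis $1/r'+\beta'>-1$ of Proposition~\ref{Glaa} being built into $(r',\beta')\in D_{j+1}(r_0,\beta_0)$). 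So, writing $D_j:=D_j(r_0,\beta_0)$, the task reduces to exhibiting, for each given $(r',\beta')\in D_{j+1}$, some $(r,\beta)\in D_j$ that satisfies \eqref{Diterat1}--\eqref{Diterat3}.

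For $j=0$ the set $D_0$ is the single point $(r_0,\beta_0)$, and I would just verify \eqref{Diterat1}--\eqref{Diterat3} directly for $(r,\beta)=(r_0,\beta_0)$: \eqref{Diterat2} is the second inequality of \eqref{s0b0}; \eqref{Diterat1} follows from $1/r_0-1/r'<\tau$ (which holds since $(r',\beta')\in D_1$) together with $\tau\le 2/N-(p-1)/q$ (by \eqref{tauchoice}) and $1/r_0<1-(p-1)/q$ (by \eqref{s0b0}); and \eqref{Diterat3} is exactly the inequality $\beta'>\beta_1(r')$ with $\beta_1$ written out via \eqref{betaj}.

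For $j\ge 1$, put $A:=2-(p-1)(N/q+\alpha)$, which is positive by \eqref{qalpha}. The idea is to take $\beta:=\beta_j(r)+\varepsilon$ with $\varepsilon>0$ small, so that membership $(r,\beta)\in D_j$ reduces to the single requirement $1/r_0-j\tau<1/r<1/r_0+j(p-1)/q$. From the formula $\beta_j(r)=\max\{\beta_0+N(1/r_0-1/r)-jA,\,-1-1/r\}$ one has the identities
\[
\beta_j(r)+\tfrac1r=\max\bigl\{\beta_0+\tfrac N{r_0}-\tfrac{N-1}r-jA,\ -1\bigr\},\qquad
\beta_j(r)+\tfrac Nr=\max\bigl\{\beta_0+\tfrac N{r_0}-jA,\ \tfrac{N-1}r-1\bigr\}.
\]
Substituting these into \eqref{Diterat2} and \eqref{Diterat3} (and letting $\varepsilon\to0$) turns each of those two into one affine bound on $1/r$ together with one $r$-independent residual inequality: the residual from \eqref{Diterat2} is $(p-1)(1/q+\alpha)<3$ (true by \eqref{qalpha}), and the residual from \eqref{Diterat3} is $N/r'+\beta'>\beta_0+N/r_0-(j+1)A$, which is precisely $\beta'>\beta_{j+1}(r')$ read off \eqref{betaj}. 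Adjoining the bounds $1/r\ge 1/r'-(p-1)/q$ and $1/r<\min\{1/r'+2/N-(p-1)/q,\,1-(p-1)/q\}$ from \eqref{Diterat1} and the $D_j$-constraints, one is left with an explicit open interval for $1/r$, and the lemma follows once that interval is shown to be nonempty.

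The main obstacle is this last step: proving the interval nonempty amounts to roughly a dozen pairwise comparisons of affine expressions in $1/r_0,1/r',\beta_0,\beta',\alpha,1/q$, and the argument closes only because $\tau$ is defined as in \eqref{tauchoice}. The two decisive comparisons are the left endpoint $1/r_0-j\tau$ against the \eqref{Diterat1}-upper bound $1/r'+2/N-(p-1)/q$, which (using $1/r'>1/r_0-(j+1)\tau$ from $(r',\beta')\in D_{j+1}$) needs $\tau\le 2/N-(p-1)/q$; and $1/r_0-j\tau$ against the \eqref{Diterat3}-upper bound $\frac1{N-1}(N/r'+\beta'+A+1)$, which (using $\beta'>-1-1/r'$ and again $1/r'>1/r_0-(j+1)\tau$) needs $\tau\le\frac1{N-1}\bigl(2-(p-1)(N/q+\alpha)\bigr)$ — exactly the two alternatives in \eqref{tauchoice}, the $N=1$ case merging them into $2-(p-1)/q$. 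Every remaining comparison reduces to \eqref{s0b0}, \eqref{qalpha}, and $(r',\beta')\in D_{j+1}$, and is routine once the $j$-dependence of $\beta_j(r)$ is tracked.
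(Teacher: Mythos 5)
Your reduction to Proposition~\ref{Glaa} and your treatment of $j=0$ match the paper. For $j\ge 1$ your plan --- take $\beta$ slightly above $\beta_j(r)$, unfold the two maxima into affine bounds on $1/r$ plus $r$-independent residuals, and show the resulting open set of admissible $1/r$ is nonempty --- is sound, and it is in substance the paper's proof in different packaging: the paper splits into two cases according to which of your lower bounds on $1/r$ is active, namely $1/r'-(p-1)/q$ (case (I), where it takes $1/r=1/r'-(p-1)/q$ exactly, see \eqref{itercase1}) or $1/r_0-j\tau$ truncated at $0$ (case (II), where it takes $1/r$ slightly above $1/r_j=\max\{1/r_0-j\tau,0\}$, see \eqref{itercase2}), and then verifies the remaining inequalities at that specific point. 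Your interval formulation and the paper's case split are interchangeable; the paper's explicit choices are just (near) the left endpoint of your interval in each regime. Note also that the truncation at $0$ in the paper's $1/r_j$ quietly handles the additional constraint $r>1$, i.e.\ $0<1/r<1$, which your interval description should include ($1/r<1$ comes from the bound $1/r<1-(p-1)/q$, and positivity of all upper bounds must be observed when both candidate lower bounds are nonpositive).

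The substantive issue is that the nonemptiness verification is not actually carried out, and your accounting of where \eqref{tauchoice} enters is slightly off. For $N\ge 2$ the first branch of \eqref{Diterat2}, namely $\beta_0+N/r_0-(N-1)/r-jA<2-(p-1)(1/q+\alpha)$ with $A:=2-(p-1)(N/q+\alpha)$, is itself a \emph{lower} bound on $1/r$, and its compatibility with the interval is established by showing it is dominated by the $D_j$-lower bound $1/r_0-j\tau$; this again requires $(N-1)\tau\le A$ together with \eqref{s0b0} --- precisely the computation the paper performs to verify \eqref{itercase1-2} via \eqref{itercase1-0}. So three, not two, of your comparisons invoke \eqref{tauchoice}. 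This does not break the argument, since the inequality needed is the same second alternative of \eqref{tauchoice} you already identified, but it contradicts your claim that every remaining comparison follows from \eqref{s0b0}, \eqref{qalpha}, and $(r',\beta')\in D_{j+1}$ alone; a complete write-up does need to go through each comparison, as the paper does.
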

\begin{proof}
We first prove the case $j=0$. It suffices to check \eqref{Diterat1}, \eqref{Diterat2}, and \eqref{Diterat3} with $(r,\beta)=(r_0,\beta_0)$ and $\beta'>\beta_1(r')$. \eqref{Diterat2} is included in assumption \eqref{s0b0}. Furthermore, it follows from $(r',\beta')\in D_1(r_0,\beta_0)$ and \eqref{s0b0} that
\[
\frac{1}{r'}<\frac{1}{r_0}+\frac{p-1}{q}<\min\left\{\frac{1}{r_0}+\frac{2}{N},1\right\}.
\]
Furthermore, by $(r',\beta')\in D_1(r_0,\beta_0)$ and \eqref{betaj},
\begin{align*}
&\left(\frac{N}{r_0}+\beta_0\right)-\left(\frac{N}{r'}+\beta'\right)<\beta_0+N\left(\frac{1}{r_0}-\frac{1}{r'}\right)-\beta_1(r')\\
&\le\beta_0+N\left(\frac{1}{r_0}-\frac{1}{r'}\right)-\left(\beta_0+N\left(\frac{1}{r_0}-\frac{1}{r'}\right)-\left(2-(p-1)\left(\frac{N}{q}+\alpha\right)\right)\right)\\
&=2-(p-1)\left(\frac{N}{q}+\alpha\right).
\end{align*}
These complete the case $j=0$.

We next prove the case $j\in\{1,2,\ldots\}$. Let $(r',\beta')\in D_{j+1}(r_0,\beta_0)$. We divide into two cases.
\begin{enumerate}[label=(\Roman*)]
\item \begin{equation}\label{itercase1}
\frac{1}{r_0}-j\tau+\frac{p-1}{q}<\frac{1}{r'}<\frac{1}{r_0}+(j+1)\frac{p-1}{q};
\end{equation}
\item \begin{equation}\label{itercase2}
\frac{1}{r_0}-(j+1)\tau<\frac{1}{r'}\le\frac{1}{r_0}-j\tau+\frac{p-1}{q}.
\end{equation}
\end{enumerate}
We first prove the case (I). We set
\[
\frac{1}{r}=\frac{1}{r'}-\frac{p-1}{q},
\]
and observe from \eqref{itercase1} that
\begin{equation}\label{itercase1-0}
\frac{1}{r_0}-j\tau<\frac{1}{r}<\frac{1}{r_0}+j\frac{p-1}{q}.
\end{equation}
Thus $(r,\beta)\in D_j(r_0,\beta_0)$ if $\beta>\beta_j(r)$. We find $\beta\in\R$ slightly larger than $\beta_j(r)$ such that $(r,\beta)$ satisfies \eqref{Diterat1}, \eqref{Diterat2}, and \eqref{Diterat3}. To this end, it suffices to check \eqref{Diterat1} and
\begin{gather}
\frac{1}{r}+\beta_j(r)<2-(p-1)\left(\frac{1}{q}+\alpha\right),\label{itercase1-2}\\
\frac{N}{r}+\beta_j(r)<\frac{N}{r'}+\beta'-(p-1)\left(\frac{N}{q}+\alpha\right)+2.\label{itercase1-3}
\end{gather}
\eqref{Diterat1} follows from the choice of $r$. Indeed,
\[
\frac{1}{r}+\frac{p-1}{q}=\frac{1}{r'}<\min\left\{\frac{1}{r'}+\frac{2}{N},1\right\}.
\]
We next check \eqref{itercase1-2}. By \eqref{betaj},
\[
\frac{1}{r}+\beta_j(r)=\max\left\{\frac{1}{r}+\beta_0+N\left(\frac{1}{r_0}-\frac{1}{r}\right)-j\left(2-(p-1)\left(\frac{N}{q}+\alpha\right)\right),-1\right\}.
\]
It follows from \eqref{s0b0}, \eqref{tauchoice}, and \eqref{itercase1-0} that
\begin{align*}
&\frac{1}{r}+\beta_0+N\left(\frac{1}{r_0}-\frac{1}{r}\right)-j\left(2-(p-1)\left(\frac{N}{q}+\alpha\right)\right)\\
&=\frac{1}{r_0}+\beta_0+(N-1)\left(\frac{1}{r_0}-\frac{1}{r}\right)-j\left(2-(p-1)\left(\frac{N}{q}+\alpha\right)\right)\\
&<\frac{1}{r_0}+\beta_0+(N-1)j\tau-j\left(2-(p-1)\left(\frac{N}{q}+\alpha\right)\right)\\
&\le\frac{1}{r_0}+\beta_0+j\left(2-(p-1)\left(\frac{N}{q}-\alpha\right)\right)-j\left(2-(p-1)\left(\frac{N}{q}+\alpha\right)\right)\\
&=\frac{1}{r_0}+\beta_0<2-(p-1)\left(\frac{1}{q}+\alpha\right).
\end{align*}
Furthermore, by \eqref{qalpha},
\[
-1<-1+(p-1)\left(\frac{2}{p}-\left(\frac{1}{q}+\alpha\right)\right)<2-(p-1)\left(\frac{1}{q}+\alpha\right).
\]
Thus \eqref{itercase1-2} holds.

We finally check \eqref{itercase1-3}. It follows from $\beta'>\beta_{j+1}(r')$, \eqref{qalpha}, \eqref{betaj}, and the inequality
\begin{equation}\label{abcd}
\begin{aligned}
\max\{a,b\}-\max\{c,d\}&=\max\{\min\{a-c,a-d\},\min\{b-c,b-d\}\}\\
&\le\max\{a-c,b-d\}\FA a,b,c,d\in\R
\end{aligned}
\end{equation}
that
\begin{align*}
&\left(\frac{N}{r}+\beta_j(r)\right)-\left(\frac{N}{r'}+\beta'\right)<-N\left(\frac{1}{r'}-\frac{1}{r}\right)+\beta_j(r)-\beta_{j+1}(r')\\
&\begin{multlined}
=-N\left(\frac{1}{r'}-\frac{1}{r}\right)+\max\left\{\beta_0+N\left(\frac{1}{r_0}-\frac{1}{r}\right)-j\left(2-(p-1)\left(\frac{N}{q}+\alpha\right)\right),-1-\frac{1}{r}\right\}\\
-\max\left\{\beta_0+N\left(\frac{1}{r_0}-\frac{1}{r'}\right)-(j+1)\left(2-(p-1)\left(\frac{N}{q}+\alpha\right)\right),-1-\frac{1}{r'}\right\}
\end{multlined}\\
&\le-N\left(\frac{1}{r'}-\frac{1}{r}\right)+\max\left\{N\left(\frac{1}{r'}-\frac{1}{r}\right)+\left(2-(p-1)\left(\frac{N}{q}+\alpha\right)\right),\frac{1}{r'}-\frac{1}{r}\right\}\\
&=-N\left(\frac{1}{r'}-\frac{1}{r}\right)+N\left(\frac{1}{r'}-\frac{1}{r}\right)+\left(2-(p-1)\left(\frac{N}{q}+\alpha\right)\right)=2-(p-1)\left(\frac{N}{q}+\alpha\right),
\end{align*}
and the proof of the case (I) is complete.

We next prove the case (II).
Let
\[
\frac{1}{r_j}:=\max\left\{\frac{1}{r_0}-j\tau,0\right\}.
\]
$(r,\beta)$ belongs to $D_j(r_0,\beta_0)$, for $r$ slightly less than $r_j$ and $\beta>\beta_j(r)$. We find $(r,\beta)$, with $r$ slightly less than $r_j$, and $\beta$ slightly larger than $\beta_j(r)$, such that $(r,\beta)$ satisfies \eqref{Diterat1}, \eqref{Diterat2}, and \eqref{Diterat3}. To this end, it suffices to check that
\begin{gather}
\frac{1}{r'}\le\frac{1}{r_j}+\frac{p-1}{q}<\min\left\{\frac{1}{r'}+\frac{2}{N},1\right\}\label{itercase2-1},\\
\frac{1}{r_j}+\beta_j(r_j)<2-(p-1)\left(\frac{1}{q}+\alpha\right)\label{itercase2-2},\\
\left(\frac{N}{r_j}+\beta_j(r_j)\right)-\left(\frac{N}{r'}+\beta'\right)<2-(p-1)\left(\frac{N}{q}+\alpha\right).\label{itercase2-3}
\end{gather}
We first check \eqref{itercase2-1}. It follows from \eqref{itercase2} that
\[
\frac{1}{r'}\le\frac{1}{r_0}-j\tau+\frac{p-1}{q}\le\frac{1}{r_j}+\frac{p-1}{q}.
\]
Furthermore, it follows from \eqref{s0b0}, \eqref{tauchoice}, and \eqref{itercase2} that
\begin{align*}
&\frac{1}{r_0}-j\tau+\frac{p-1}{q}<\min\left\{\left(\frac{1}{r'}+(j+1)\tau\right)-j\tau+\frac{p-1}{q},\frac{1}{r_0}+\frac{p-1}{q}\right\}\\
&\le\min\left\{\frac{1}{r'}+\tau+\frac{p-1}{q},1\right\}\le\min\left\{\frac{1}{r'}+\left(\frac{2}{N}-\frac{p-1}{q}\right)+\frac{p-1}{q},1\right\}=\min\left\{\frac{1}{r'}+\frac{2}{N},1\right\}.
\end{align*}
These together with $(p-1)/q<\min\left\{2/N,1\right\}$ imply that
\[
\frac{1}{r'}\le\frac{1}{r_j}+\frac{p-1}{q}=\max\left\{\frac{1}{r_0}-j\tau+\frac{p-1}{q},\frac{p-1}{q}\right\}<\min\left\{\frac{1}{r'}+\frac{2}{N},1\right\}.
\]

Since $1/r_0-1/r_j\le j\tau$, \eqref{itercase2-2} can be proved by the same argument as in the proof of \eqref{itercase1-2}.

We finally check \eqref{itercase2-3}. It follows from $(r',\beta')\in D_{j+1}(r_0,\beta_0)$, \eqref{tauchoice}, \eqref{betaj}, and \eqref{abcd} that
\begin{align*}
&\left(\frac{N}{r_j}+\beta_j(r_j)\right)-\left(\frac{N}{r'}+\beta'\right)<\left(\frac{N}{r_j}+\beta_j(r_j)\right)-\left(\frac{N}{r'}+\beta_{j+1}(r')\right)\\
&\begin{multlined}
=\max\left\{\frac{N}{r_j}+\beta_0+N\left(\frac{1}{r_0}-\frac{1}{r_j}\right)-j\left(2-(p-1)\left(\frac{N}{q}+\alpha\right)\right),\frac{N}{r_j}-1-\frac{1}{r_j}\right\}\\
-\max\left\{\frac{N}{r'}+\beta_0+N\left(\frac{1}{r_0}-\frac{1}{r'}\right)-(j+1)\left(2-(p-1)\left(\frac{N}{q}+\alpha\right)\right),\frac{N}{r'}-1-\frac{1}{r'}\right\}
\end{multlined}\\
&\le\max\left\{2-(p-1)\left(\frac{N}{q}+\alpha\right),(N-1)\left(\frac{1}{r_j}-\frac{1}{r'}\right)\right\}\\
&=\max\left\{2-(p-1)\left(\frac{N}{q}+\alpha\right),(N-1)\max\left\{\frac{1}{r_0}-j\tau-\frac{1}{r'},-\frac{1}{r'}\right\}\right\}\\
&\le\max\left\{2-(p-1)\left(\frac{N}{q}+\alpha\right),(N-1)\max\left\{\frac{1}{r_0}-j\tau-\left(\frac{1}{r_0}-(j+1)\tau\right),0\right\}\right\}\\
&=\max\left\{2-(p-1)\left(\frac{N}{q}+\alpha\right),(N-1)\tau,0\right\}=2-(p-1)\left(\frac{N}{q}+\alpha\right),
\end{align*}
and the proof of Lemma~\ref{Diterat} is complete.
\end{proof}

\begin{proof}[Proof of Proposition~{\rm\ref{Dstrategy'}}]
It follows from \eqref{D} that $\{D_j(r_0,\beta_0)\}_{j\in\{0,1,\ldots\}}$ satisfies (a) and (c). To check (b), let $(r',\beta')\in D_{j+1}(r_0,\beta_0)$ and $(r,\beta)\in D_j(r_0,\beta_0)$ be as in Lemma~\ref{Diterat}. Set
\[
\frac{1}{\overline{r}}:=\frac{1}{r}+\frac{p-1}{q},\quad \overline{\beta}:=\beta+(p-1)\alpha.
\]
It follows from \eqref{Diterat1}, \eqref{Diterat2}, and \eqref{Diterat3} that $1<\overline{r}<r'$ and
\[
\frac{1}{\overline{r}}+\overline{\beta}<2,\quad\frac{1}{\overline{r}}-\frac{1}{r'}<\frac{2}{N},\quad\frac{N}{r'}+\beta'>\frac{N}{\overline{r}}+\overline{\beta}-2.
\]
Furthermore, $(r',\beta')\in D_{j+1}(r_0,\beta_0)$ implies that $1/r'+\beta'>-1$. We deduce from Proposition~\ref{Glaa} and the H\"{o}lder inequality that $G[af]\in L^{r'}_{\beta'}$ for any $f\in L^{r}_{\beta}$.
This implies (b) and completes the proof of Proposition~\ref{Dstrategy'}.
\end{proof}
\medskip

\noindent
{\bf Acknowledgment.}
The author was supported in part by FoPM, WINGS Program, the University of Tokyo. We also thank Professor Toru Kan for helpful discussions and comments.
\begin{bibdiv}
\begin{biblist}
\bib{JunAi}{article}{
   author={Ai, Jun},
   author={Zhu, Xi Ping},
   title={Positive solutions of inhomogeneous elliptic boundary value
   problems in the half space},
   journal={Comm. Partial Differential Equations},
   volume={15},
   date={1990},
   pages={1421--1446}
}
\bib{BaL}{article}{
   author={Bahri, Abbas},
   author={Lions, Pierre-Louis},
   title={On the existence of a positive solution of semilinear elliptic
   equations in unbounded domains},
   journal={Ann. Inst. H. Poincar\'{e} C Anal. Non Lin\'{e}aire},
   volume={14},
   date={1997},
   pages={365--413}
}
\bib{BL1}{article}{
   author={Berestycki, H.},
   author={Lions, P.-L.},
   title={Nonlinear scalar field equations. I. Existence of a ground state},
   journal={Arch. Rational Mech. Anal.},
   volume={82},
   date={1983},
   pages={313--345}
}
\bib{BL2}{article}{
   author={Berestycki, H.},
   author={Lions, P.-L.},
   title={Nonlinear scalar field equations. II. Existence of infinitely many
   solutions},
   journal={Arch. Rational Mech. Anal.},
   volume={82},
   date={1983},
   pages={347--375}
}
\bib{BGNV}{article}{
   author={Bidaut-V\'{e}ron, Marie-Fran\c{c}oise},
   author={Hoang, Giang},
   author={Nguyen, Quoc-Hung},
   author={V\'{e}ron, Laurent},
   title={An elliptic semilinear equation with source term and boundary measure data: the supercritical case},
   journal={J. Funct. Anal.},
   volume={269},
   date={2015},
   pages={1995--2017}
}
\bib{BV}{article}{
   author={Bidaut-V\'{e}ron, Marie-Francoise},
   author={Vivier, Laurent},
   title={An elliptic semilinear equation with source term involving
   boundary measures: the subcritical case},
   journal={Rev. Mat. Iberoamericana},
   volume={16},
   date={2000},
   pages={477--513}
}
\bib{CR1}{article}{
   author={Crandall, Michael G.},
   author={Rabinowitz, Paul H.},
   title={Bifurcation, perturbation of simple eigenvalues and linearized
   stability},
   journal={Arch. Rational Mech. Anal.},
   volume={52},
   date={1973},
   pages={161--180}
}
\bib{CR2}{article}{
   author={Crandall, Michael G.},
   author={Rabinowitz, Paul H.},
   title={Some continuation and variational methods for positive solutions
   of nonlinear elliptic eigenvalue problems},
   journal={Arch. Rational Mech. Anal.},
   volume={58},
   date={1975},
   pages={207--218}
}
\bib{DM}{article}{
   author={Dou, Jingbo},
   author={Ma, Jingjing},
   title={Hardy--Littlewood--Sobolev inequalities with
partial variable weight on the upper half space
and related inequalities},
   journal={preprint}
   date={2023}
}
\bib{DST}{article}{
   author={Dur\'{a}n, Ricardo G.},
   author={Sanmartino, Marcela},
   author={Toschi, Marisa},
   title={Weighted a priori estimates for the Poisson equation},
   journal={Indiana Univ. Math. J.},
   volume={57},
   date={2008},
   pages={3463--3478}
}
\bib{EL}{article}{
   author={Esteban, Maria J.},
   author={Lions, P.-L.},
   title={Existence and nonexistence results for semilinear elliptic
   problems in unbounded domains},
   journal={Proc. Roy. Soc. Edinburgh Sect. A},
   volume={93},
   date={1982/83},
   pages={1--14}
}
\bib{F}{article}{
   author={Farina, Alberto},
   title={On the classification of solutions of the Lane-Emden equation on
   unbounded domains of $\mathbb{R}^N$},
   journal={J. Math. Pures Appl. (9)},
   volume={87},
   date={2007}
}
\bib{FW}{article}{
   author={Fern\'{a}ndez, Antonio J.},
   author={Weth, Tobias},
   title={The nonlinear Schr\"{o}dinger equation in the half-space},
   journal={Math. Ann.},
   volume={383},
   date={2022},
   pages={361--397}
}
\bib{FIK1}{article}{
   author={Fila, Marek},
   author={Ishige, Kazuhiro},
   author={Kawakami, Tatsuki},
   title={Existence of positive solutions of a semilinear elliptic equation
   with a dynamical boundary condition},
   journal={Calc. Var. Partial Differential Equations},
   volume={54},
   date={2015},
   pages={2059--2078}
}
\bib{FIK2}{article}{
   author={Fila, Marek},
   author={Ishige, Kazuhiro},
   author={Kawakami, Tatsuki},
   title={Positive solutions of a semilinear elliptic equation with singular
   Dirichlet boundary data},
   journal={J. Elliptic Parabol. Equ.},
   date={2015},
   pages={335--362}
}
\bib{FSW}{article}{
   author={Fila, Marek},
   author={Souplet, Philippe},
   author={Weissler, Fred B.},
   title={Linear and nonlinear heat equations in $L^q_\delta$ spaces and
   universal bounds for global solutions},
   journal={Math. Ann.},
   volume={320},
   date={2001},
   pages={87--113}
}
\bib{GT}{book}{
   author={Gilbarg, David},
   author={Trudinger, Neil S.},
   title={Elliptic partial differential equations of second order},
   series={Classics in Mathematics},
   note={Reprint of the 1998 edition},
   publisher={Springer-Verlag, Berlin},
   date={2001},
   pages={xiv+517},
}
\bib{Hsu1}{article}{
   author={Hsu, Tsing-San},
   title={Existence and multiplicity of positive solutions for a class of
   elliptic boundary value problems in the half-space},
   journal={Adv. Nonlinear Stud.},
   volume={7},
   date={2007},
   pages={187--209}
}
\bib{HL1}{article}{
   author={Hsu, Tsing-San},
   author={Lin, Huei-Li},
   title={Existence of multiple positive solutions of semilinear elliptic
   boundary value problems in the half space},
   journal={Nonlinear Anal.},
   volume={70},
   date={2009},
   pages={849--865}
}
\bib{HL2}{article}{
   author={Hsu, Tsing-san},
   author={Lin, Huei-li},
   title={Multiple positive solutions of semilinear elliptic boundary value
   problems in the half space},
   journal={Nonlinear Anal.},
   volume={75},
   date={2012},
   pages={304--316}
}
\bib{IK}{article}{
author= {Ishige, Kazuhiro},
author= {Katayama, Sho},
title= {Supercritical H\'enon type equation with a forcing term},
journal= {preprint}
}
\bib{IOS01}{article}{
   author={Ishige, Kazuhiro},
   author={Okabe, Shinya},
   author={Sato, Tokushi},
   title={A supercritical scalar field equation with a forcing term},
   journal={J. Math. Pures Appl.},
   volume={128},
   date={2019},
   pages={183--212}
}
\bib{IOS03}{article}{
   author={Ishige, Kazuhiro},
   author={Okabe, Shinya},
   author={Sato, Tokushi},
   title={Existence of non-minimal solutions to an inhomogeneous elliptic equation with supercritical nonlinearity},
   journal={Adv. Nonlinear Stud. },
   volume={23},
   date={2023},
   pages={2022--0073}
}
\bib{JL}{article}{
   author={Joseph, D. D.},
   author={Lundgren, T. S.},
   title={Quasilinear Dirichlet problems driven by positive sources},
   journal={Arch. Rational Mech. Anal.},
   volume={49},
   date={1972/73},
   pages={241--269},
}
\bib{LTW}{article}{
   author={Lien, Wen Ching},
   author={Tzeng, Shyuh Yaur},
   author={Wang, Hwai Chiuan},
   title={Existence of solutions of semilinear elliptic problems on
   unbounded domains},
   journal={Differential Integral Equations},
   volume={6},
   date={1993},
   pages={1281--1298}
}
\bib{NS01}{article}{
   author={Naito, Y\={u}ki},
   author={Sato, Tokushi},
   title={Positive solutions for semilinear elliptic equations with singular
   forcing terms},
   journal={J. Differential Equations},
   volume={235},
   date={2007}
}
\bib{QS}{book}{
   author={Quittner, Pavol},
   author={Souplet, Philippe},
   title={Superlinear parabolic problems},
   series={Birkh\"{a}user Advanced Texts: Basler Lehrb\"{u}cher.
   [Birkh\"{a}user Advanced Texts: Basel Textbooks]},
   edition={2},
   publisher={Birkh\"{a}user/Springer, Cham},
   date={2019},
   pages={xvi+725}
}
\bib{T}{article}{
   author={Triebel, Hans},
   title={Characterizations of Besov-Hardy-Sobolev spaces: a unified
   approach},
   journal={J. Approx. Theory},
   volume={52},
   date={1988}
   pages={162--203}
}
\end{biblist}
\end{bibdiv}
\end{document}